\documentclass[12pt]{amsart}

\usepackage[margin=1.3in]{geometry}
\usepackage[utf8]{inputenc}
\usepackage[T1]{fontenc}
\usepackage{subfiles}
\usepackage{comment}
\usepackage{float}
\usepackage{marginnote}
\usepackage{euscript}
\usepackage[dvipsnames]{xcolor}
\usepackage{graphicx}
\usepackage{amssymb}
\usepackage{mathrsfs}
\usepackage{amsmath}
\usepackage{mathtools} 

\usepackage[cal=cm]{mathalfa}
\usepackage{stmaryrd}
\usepackage{float}
\usepackage{todonotes} \setuptodonotes{color=blue!30}
\usepackage{changebar}
\usepackage[full]{textcomp}
\usepackage{booktabs}
\usepackage{subcaption}

\usepackage[colorlinks=true, allcolors=highlight, hyperfootnotes=false, linktocpage=true]{hyperref}
\usepackage{amsthm}
\usepackage{aliascnt} 
\usepackage{url}

\definecolor{highlight}{cmyk}{90,99,0,0}

\usepackage{fancyvrb}
\usepackage{fvextra}
\usepackage{inconsolata}
\DefineVerbatimEnvironment{MyVerb}{Verbatim}{commandchars=\\\{\}, fontfamily=zi4, fontsize=\small}
\fvset{baselinestretch=1.2}
\fvset{breaklines=true, breakanywhere=true}
\usepackage{framed}
\definecolor{shadecolor}{rgb}{0.95,0.95,0.95}
\definecolor{code}{cmyk}{20, 0, 100, 0}

\usepackage{tikz,tikz-cd,tikz-3dplot}
\usepackage{pgfplots}
\usetikzlibrary{calc}
\usetikzlibrary{fadings}
\usetikzlibrary{decorations.pathmorphing}
\usetikzlibrary{decorations.pathreplacing}
\usetikzlibrary{patterns}
\usetikzlibrary{arrows,shadows,positioning, calc, decorations.markings,
	hobby,quotes,angles,decorations.pathreplacing,intersections,shapes}
\usepgflibrary{shapes.geometric}
\usetikzlibrary{fillbetween,backgrounds}
\usetikzlibrary{decorations.text}

\usepackage[skip = .5\baselineskip]{parskip}
\usepackage{anyfontsize}
\usepackage[lining]{libertine}%
\usepackage{courier}
\usepackage{csquotes}
\makeatletter
\renewcommand*\libertine@figurestyle{LF}
\makeatother
\usepackage[libertine,libaltvw,liby]{newtxmath}
\usepackage{microtype}

\usepackage{enumerate}
\usepackage{enumitem}
\setlist[enumerate,1]{label=(\roman*),itemsep=0.9ex}
\setlist[itemize]{itemsep=0.9ex}

\hypersetup{bookmarksdepth = 3}

\makeatletter
\def\@tocline#1#2#3#4#5#6#7{\relax
	\ifnum #1>\c@tocdepth 
	\else
	\par \addpenalty\@secpenalty\addvspace{#2}%
	\begingroup \hyphenpenalty\@M
	\@ifempty{#4}{%
		\@tempdima\csname r@tocindent\number#1\endcsname\relax
	}{%
		\@tempdima#4\relax
	}%
	\parindent\z@ \leftskip#3\relax \advance\leftskip\@tempdima\relax
	\rightskip\@pnumwidth plus4em \parfillskip-\@pnumwidth
	#5\leavevmode\hskip-\@tempdima
	\ifcase #1
	\or\or \hskip 1em \or \hskip 2em \else \hskip 3em \fi%
	#6\nobreak\relax
	\dotfill\hbox to\@pnumwidth{\@tocpagenum{#7}}\par
	\nobreak
	\endgroup
	\fi}
\makeatother

\newcommand{\mr}{\mathrm}

\DeclareMathOperator{\Aut}{Aut}

\newcommand{\Tot}{\mathrm{Tot}}
\DeclareMathOperator{\Rep}{Rep}
\DeclareMathOperator{\Exp}{Exp}
\DeclareMathOperator{\SL}{SL}

\newcommand{\memind}{membrane index}
\newcommand{\meminds}{membrane indices}
\newcommand{\GWPT}{K-theoretic GW/PT correspondence}
\newcommand{\CYth}{X}
\newcommand{\CYfo}{Y}
\newcommand{\CYfi}{Z}

\newcommand{\define}[1]{\textbf{#1}}
\newcommand{\ol}{\overline}
\newcommand{\cev}[1]{\reflectbox{\ensuremath{\vec{\reflectbox{\ensuremath{#1}}}}}}

\newcommand{\gpT}{\mathsf{T}}
\newcommand{\LieT}{\mathfrak{t}}
\newcommand{\invariantfont}[1]{\mathsf{#1}}
\newcommand{\modulifont}[1]{#1}
\newcommand{\Mbar}{\overline{\modulifont{M}}}
\newcommand{\vir}{\mathrm{virt}}

\newcommand{\GW}{\invariantfont{GW}}
\newcommand{\GWs}{GW series}
\newcommand{\GWarg}[3]{\GW_{#3}(#1,#2)}

\newcommand{\MTwoMod}{\invariantfont{M2}}

\newcommand{\PT}{\invariantfont{PT}}
\newcommand{\PTs}{PT series}
\newcommand{\NO}{\invariantfont{NO}}
\newcommand{\PTarg}[5]{\PT_{#1}(#3,#4,#5)}

\newcommand{\ZNek}{\mathcal{Z}^{\mr{Nek}}}

\newcommand{\qnum}[2]{[#1]_{#2}}
\newcommand{\sqabs}[1]{\mathbin{\|} #1 \mathbin{\|}^2}

\newcommand{\Aone}{\mathbb{C}}
\newcommand{\Aaff}[1]{\mathbb{C}^{#1}}
\usepackage{xifthen}
\newcommand{\Gm}[1][]{%
	\ifthenelse{\isempty{#1}}%
	{\mathbb{C}^\times}
	{(\mathbb{C}^\times)^{#1}}
}

\newcommand{\intEquiv}[1]{
	\int}

\newcommand{\hhh}{\mathsf{H}}
\newcommand{\Heff}[1]{\hhh_2^{\mathrm{eff}}(#1,\mathbb{Z})}
\newcommand{\HodgeLambda}[2][]{\Lambda_{#1} (#2)}%

\makeatletter 
\def\makeCal#1{%
	\expandafter\newcommand\csname c#1\endcsname{\mathcal{#1}}}

\def\makeBB#1{%
	\expandafter\newcommand\csname b#1\endcsname{\mathbb{#1}}}

\def\makeFrak#1{%
	\expandafter\newcommand\csname f#1\endcsname{\mathfrak{#1}}}

\def\makeScr#1{%
	\expandafter\newcommand\csname s#1\endcsname{\mathscr{#1}}}

\def\makeSF#1{%
	\expandafter\newcommand\csname sf#1\endcsname{\mathsf{#1}}}

\count@=0
\loop
\advance\count@ 1
\edef\y{\@Alph\count@} 
\expandafter\makeCal\y
\expandafter\makeBB\y
\expandafter\makeFrak\y
\expandafter\makeScr\y
\expandafter\makeSF\y
\ifnum\count@<26
\repeat

\newcommand{\ri}{\mathrm{i}}
\newcommand{\re}{\mathrm{e}}

\theoremstyle{plain}
\newtheorem{thm}{Theorem}[section]

\newtheorem*{prop*}{Proposition}

\newtheorem*{conj*}{Conjecture}

\newtheorem*{cor*}{Corollary}

    \newaliascnt{prop}{thm}
    \newtheorem{prop}[prop]{Proposition}
    \aliascntresetthe{prop}
    
    \newaliascnt{conj}{thm}
    \newtheorem{conj}[conj]{Conjecture}
    \aliascntresetthe{conj}

\theoremstyle{definition}

    \newaliascnt{rmk}{thm}
    \newtheorem{rmk}[rmk]{Remark}
    \aliascntresetthe{rmk}
    
    \newaliascnt{example}{thm}
    \newtheorem{example}[example]{Example}
    \aliascntresetthe{example}
    
    \newaliascnt{nonexample}{thm}
    \newtheorem{nonexample}[nonexample]{Non-example}
    \aliascntresetthe{nonexample}
    
    \newaliascnt{evidence}{thm}
    \newtheorem{evidence}[evidence]{Evidence}
    \aliascntresetthe{evidence}
    
    \newaliascnt{question}{thm}
    \newtheorem{question}[question]{Question}
    \aliascntresetthe{question}

\theoremstyle{plain}
\newtheorem{innercustomthm}{Theorem}
\newenvironment{customthm}[1]
{\renewcommand\theinnercustomthm{#1}\innercustomthm}
{\endinnercustomthm}

\theoremstyle{plain}
\newtheorem{innercustomconj}{Conjecture}
\newenvironment{customconj}[1]
{\renewcommand\theinnercustomconj{#1}\innercustomconj}
{\endinnercustomconj}

\usepackage{cleveref}

\crefname{equation}{Eq.}{Eqs.}
\crefname{eqnarray}{Eq.}{Eqs.}
\crefname{algo}{algorithm}{algorithms}
\crefname{conj}{conjecture}{conjectures}
\crefname{lem}{lemma}{lemmas}
\crefname{thm}{theorem}{theorems}
\crefname{claim}{claim}{claims}
\crefname{rmk}{remark}{remarks}
\crefname{prop}{proposition}{propositions}
\crefname{section}{section}{sections}
\crefname{appendix}{appendix}{appendices}
\crefname{cor}{corollary}{corollaries}
\crefname{figure}{figure}{figures}
\crefname{table}{table}{tables}
\crefname{example}{example}{examples}
\crefname{nonexample}{non-example}{non-examples}
\crefname{evidence}{evidence}{evidence}
\crefname{question}{question}{questions}
\crefname{prob}{problem}{problems}
\crefname{assm}{assumption}{assumptions}
\crefname{defn}{definition}{definitions}
\crefname{notation}{notation}{notations}
\crefname{speculation}{speculation}{speculations}
\crefname{construction}{construction}{constructions}
\crefname{observation}{observation}{observations}
\crefname{innercustomthm}{Theorem}{Theorems}
\crefname{innercustomconj}{Conjecture}{Conjectures}

\usepackage[
        backend=biber,
        style=alphabetic,
        natbib=true,
        url=false, 
        doi=false,
        eprint=true,
        isbn=false,
        maxcitenames=5,
        maxbibnames=4,
        maxalphanames=4,
        sortcites=true
    ]{biblatex}

\AtEveryBibitem{\clearlist{language}}
\renewbibmacro{in:}{%
	\ifboolexpr{%
		test {\ifentrytype{article}}%
		or
		test {\ifentrytype{inproceedings}}%
	}{}{\printtext{\bibstring{in}\intitlepunct}}%
}

\DeclareFieldFormat[article,inbook,incollection,inproceedings,patent,thesis,unpublished,techreport,misc,book]{title}{\mkbibquote{#1}}

\usepackage{multirow}

\title{Experiments with membranes, maps and sheaves}

\author[D.~Holmes]{Daniel Holmes}
\address[DH]{ISTA, Am Campus 1, 3400 Klosterneuburg, Austria}
\email{daniel.holmes@ist.ac.at}
\urladdr{https://www.daniel-holmes.at}

\author[Y.~Schuler]{Yannik Schuler}
\address[YS]{ETH Zurich, Rämistrasse 101, 8092 Zurich, Switzerland}
\email{yannik.schuler@math.ethz.ch}
\urladdr{https://yannikschuler.github.io}

\date{September 2026}

\begin{document}

\begin{abstract}
    Motivated by the conjectural existence of M-theory, we investigate a web of correspondences between enumerative invariants of a Calabi--Yau fivefold $Z$ with a torus action. This includes a correspondence between the fivefold Gromov--Witten invariants and K-theoretic Pandharipande--Thomas invariants of a threefold $X \subset Z$, mediated by so-called membrane indices, which generalise Gopakumar--Vafa invariants. We establish the correspondence for strip geometries for restricted torus actions and test it numerically for general torus actions. Further numerical evidence is provided for the closed vertex, local surfaces and local projective spaces. For the latter we present conjectural formulae for their low-degree membrane indices. When the fivefold is the product of the affine plane with a suitable toric variety, we prove geometric engineering and equate the generating series of Pandharipande--Thomas invariants with the corresponding instanton partition function while equality with the Gromov--Witten series is probed numerically.
\end{abstract}

\maketitle

\tableofcontents

\section*{Introduction}
Over the past decades, research in enumerative geometry has been profoundly shaped by its connection to mathematical physics.
In particular, many correspondences between different enumerative invariants are expected to originate from the fact that their associated physical theories admit a common lift to topological M-theory. For instance, if $\CYth$ is a smooth Calabi--Yau threefold the enumeration of M2-branes on $\CYth\times \Aaff{2}$ (or more precisely a fibration of this fivefold over a circle) should tie together the enumeration of sheaves on $\CYth$, the enumeration of sheaves on $\Aaff{2}$ and the enumeration of stable maps to $X\times \Aaff{2}$. Each of these correspondences is, however, expected to hold in larger generality.

The purpose of this paper is to provide new support for this web of correspondences and to explore their consequences, both by proving theorems and, wherever a general proof appears out of reach, by verifying predictions numerically. \Cref{fig: summary paper} summarises the interconnection between our results and all conjectures investigated. The remainder of this section presents each of our results in detail and puts them into context.
\begin{figure}
    \centering
    \begin{tikzpicture}[outer sep = 0.5em, scale = 0.95]
        \node[draw, align = center, minimum height = 3.5em, minimum width = 8em] (A) at (0,6.5) {M-theory\\on CY5 $\CYfi$};
        \node[draw, align = center, minimum height = 3.5em, minimum width = 8em] (B) at (0,4) {Membrane index\\$\Omega_\beta$ of CY5 $\CYfi$};
        \node[draw, align = center, minimum height = 3.5em, minimum width = 8em] (C) at (-6,0) {GW theory\\of CY5 $\CYfi$};
        \node[draw, align = center, minimum height = 3.5em, minimum width = 8em] (D) at (0,-2) {DT theory of\\3-fold $\CYth$};
        \node[draw,align = center, minimum height = 3.5em, minimum width = 8em] (E) at (6,0) {Pure $\mr{SU}(N)$ gauge\\theory on $\Aaff{2}$};
        \draw[<->, thick] ([yshift=0.75em]C.east) to[bend left=30] ([yshift=0.75em]E.west);
        \draw[fill=white, draw = white] ([xshift=-2em]B.south) rectangle ([xshift=2em]D.north);
        \draw[<->, thick] ([yshift=-0.75em]C.east) to[bend left=30] ([xshift=-1.5em]D.north);
        \draw[<->, thick] ([xshift=1.5em]D.north) to[bend left=30] ([yshift=-0.75em]E.west);
        \node[rotate=27] at (-3.3,1) {\tiny Conj.~\ref{conj: gauge GW correspondence}};
        \node[rotate=9] at (-3.1,-0.4) {\tiny Conj.~\ref{conj: GW PT}};
        \node[rotate=-7] at (3.1,-0.4) {\tiny Thm.~\ref{thm: gauge PT correspondence}};
        \draw[->, thick] (A.south) -- (B.north);
        \draw[->, thick] (B.south west) -- (C.north) node[midway, sloped, inner sep=-0.1em, anchor=south] {\tiny Conj.~\ref{conj: gen GV} \& \cite[Conj.~M]{BS24:refGW}};
        \draw[->, thick] (B.south) -- (D.north) node[midway, sloped, inner sep=0.1em, anchor=north] {\tiny $Z = \Tot_X \, \cL_4 \oplus \cL_5$} node[midway, sloped, inner sep=-0.1em, anchor=south] {\tiny \cite[Conj.~2.1]{NO14:membranes}};
        \draw[->, thick] (B.south east) -- (E.north) node[midway, sloped, inner sep=0.1em, anchor=north] {\tiny $Z=X_{N,m} \times \Aaff{2}$} node[midway, sloped, inner sep=-0.1em, anchor=south] {\tiny \cite{Nek05:ChasingM}};
        \draw[<-, thick, gray] ([xshift=-1.2em]D.south) to[bend left=45] ([xshift=1.5em]C.south);
        \draw[<-, thick, gray] (C.south) to[bend right=55] ([xshift=1.2em]D.south);
        %
        \node[color=gray, rotate=-30] at (-4.2,-3.5) {\tiny Conj.~\ref{conj: rigidity} \& \ref{conj: strip GW formula}};
        \node[color=gray, rotate=-30] at (-3.6,-2.5) {\tiny Conj.~\ref{conj: PT sym} \& \ref{conj: curious CY4 vanishing PT}};
    \end{tikzpicture}
    \caption{Interconnection between different enumerative invariants associated with Calabi--Yau fivefolds and their subvarieties. The gray arrows indicate consequences of \Cref{conj: GW PT}.}
    \label{fig: summary paper}
\end{figure}

\subsection{K-theoretic maps/pairs correspondence}
One of the main correspondences studied in this paper will be a K-theoretic variant of the MNOP conjecture \cite{MNOPI} (equivalently, the Gromov--Witten/Pandharipande--Thomas correspondence \cite{PT09:StablePairs}) in the context of Calabi--Yau fivefolds.

Gromov--Witten invariants can be defined for targets of any dimension. So for a Calabi--Yau fivefold $\CYfi$ with the action of a torus $\gpT$ we may form the generating series of $\gpT$-equivariant Gromov--Witten invariants of $\CYfi$:
\begin{equation*}
    \GWarg{\CYfi}{\gpT}{}(Q,u;\epsilon_i) =\sum_{\beta \neq 0} Q^{\beta}\sum_{g\geq 0} u^{2g-2} \intEquiv{\mgpT}_{[\Mbar_g(\CYfi,\beta)]^{\vir}_{\gpT}} 1\,.
\end{equation*}
This is a formal series in a variable $Q$ recording the curve class of stable maps and a variable $u$ recording the domain genus. The series takes values in rational functions in equivariant variables $\epsilon_1,\ldots,\epsilon_r \in\LieT^\vee = \mr{Lie}\,\gpT^\vee$ generating $H^*_{\gpT}(\mr{pt})$.

In contrast, in Donaldson--Thomas theory there is no framework that allows the definition of genuine Donaldson--Thomas invariants of Calabi--Yau fivefolds to this day. As a substitute, whenever $\CYfi$ is the total space of two line bundles over a threefold $\CYth$
\begin{equation}
    \label{eq: intro CY5 PT}
    \CYfi = \Tot{}_{\CYth} \, \cL_4 \oplus \cL_5
\end{equation}
with a $\gpT'$-action, Nekrasov and Okounkov propose to study the generating series of equivariant K-theoretic Pandharipande--Thomas invariants of $\CYth$ with a suitable insertion depending on $\cL_4$ and $\cL_5$. (See \Cref{sec: PT setup} for details.) We denote this series by
\begin{equation*}
    \PTarg{}{\CYth}{\CYfi}{\Gm_q}{\gpT'}(Q,q;q_i)\,.
\end{equation*}
Like the \GWs , it depends on a variable $Q$ recording the support of the stable pair and on the generators $q_i = \re^{u \epsilon_i}$ of the representation ring of the torus $\gpT'$.
In addition, the series depends on the so-called box-counting variable $q$. Following to Nekrasov and Okounkov this formal variable should be interpreted as a coordinate on a torus $\Gm_q$ acting on the fibres of $\CYfi \rightarrow \CYth$ with opposite weights $\pm 1$. In other words, $q$ should stand on equal footing with the equivariant variables $q_i$. Indeed, what led Nekrasov and Okounkov to study such generating series of K-theoretic invariants is that, although a priori recording threefold invariants, the series behaves much like one enumerating curves in $\CYfi$. This claim can be made more concrete as follows.

\begin{customconj}{A} (=\Cref{conj: GW PT})
    \label{conj: intro GW PT}
    For all local equivariant Calabi--Yau fivefolds $(\CYfi,\gpT)$ of the form \eqref{eq: intro CY5 PT} we have
    \begin{equation}
        \label{eq: intro GW PT}
        \PTarg{}{\CYth}{\CYfi}{\Gm_q}{\gpT'}(Q,q;q_i) = \exp \, \GWarg{\CYfi}{\Gm_q \times \gpT'}{}(Q,u;\epsilon_i)
    \end{equation}
    under the natural change of variables $q_i = \re^{u \epsilon_i}$, $q = \re^{u c_1(q)}$ relating $\Gm_q \times \gpT'$-equivariant K-theory and cohomology of a point.
\end{customconj}

This conjecture was first mentioned by Brini and the second author in \cite[Conj.~7.19]{BS24:refGW} and coincides with the original Gromov--Witten/Pandharipande--Thomas conjecture when $\CYth$ is a Calabi--Yau threefold and $\gpT'$ fixes the holomorphic threeform \cite[Sec.~7.2.3]{BS24:refGW}.

\textbf{Evidence for \Cref{conj: intro GW PT}.} One of the main contributions of this work is to provide the following evidence for \Cref{conj: intro GW PT} and its consequences:
\begin{enumerate}
    \item\label{item: intro check first} When $\CYth$ is a so-called toric strip geometry and $\cL_4=\cL_5=\cO_{\CYth}$ we prove a closed formula for the \PTs\ (\Cref{prop: strip PT formula}). This is done via the formula for the two-legged capped vertex of Kononov--Okounkov--Osinenko \cite{KOO21:twoLegPT}. We then prove \Cref{conj: intro GW PT} under certain assumptions on the $\gpT$-action (\Cref{prop: strip GW formula limit}) and probe the conjecture numerically without any assumption on the torus action by computing the Gromov--Witten invariants in low genus and curve degree (\Cref{ev: strip GW}).

    \item When $\CYth$ is the so-called closed vertex and $\cL_4=\cL_5=\cO_{\CYth}$ we numerically test \Cref{conj: intro GW PT} in low degree via a conjectural closed formula (\Cref{conj: closed vertex formula}) for the GW and PT series (\Cref{ev: closed vertex formula PT,ev: closed vertex formula GW}).

    \item When $\CYth = X_{N,m}$ is a toric Calabi--Yau threefold engineering pure supersymmetric $\mr{SU}(N)$ gauge theory on $\Aaff{2}$ we probe \Cref{conj: intro GW PT} by matching an explicit formula for the \PTs\ with numerical low-degree and low-genus calculations on the Gromov--Witten side (\Cref{ev: gauge GW correspondence}). The formula for the \PTs\ (\Cref{thm: gauge PT correspondence}) is derived via the refined topological vertex. In physics literature this formula is well-known \cite{Tak08:RefVertInstanton}. Hence, the only task is to carefully check that the mathematical setup indeed reproduces the one appearing in physics literature. This largely builds on work of Arbesfeld \cite{Arb21:KthDTHilbSurf}.

    \item\label{item: intro local Pn} We test \Cref{conj: intro GW PT} for all local projective spaces $\Tot{}_{\bP^n} \,\cO(a_{n+1}) \oplus \ldots \oplus \cO(a_{5})$ with $a_i \leq 0$. In low degree, both sides of the correspondence are calculated numerically and checked to admit a common analytic continuation (\Cref{ev: P2 O min 1,ev: P2 O min 1 PT,ev: local P3,ev: local P3 PT}).

    \item\label{item: intro check last} We claim that for local projective surfaces $\CYfi = \Tot{}_{S} \, \cL_1 \oplus \cL_2 \oplus \cO_{S}$ with $\cL_1$, $\cL_2$ sufficiently negative \Cref{conj: intro GW PT} holds trivially with both sides of the correspondence vanishing identically (\Cref{conj: curious CY4 vanishing,conj: curious CY4 vanishing PT}). We check our claim in numerical experiments for several targets $\CYfi$ in low degree (\Cref{ev: curious CY4 vanishing,ev: curious CY4 vanishing PT}). Under an additional assumption on the torus action we prove that the vanishing on the Gromov--Witten side indeed holds true (\Cref{prop: curious CY4 vanishing limit}).
\end{enumerate}

The need for numerical methods to check \Cref{conj: intro GW PT} arises from the issue that the \PTs\ is usually computable in limits where the \GWs\ is inaccessible and vice versa. For instance when $\CYfi = \CYth \times \Aaff{2}$ the \PTs\ can often be evaluated via the refined topological vertex. On the GW side, however, this corresponds to a limit where some variables $\epsilon_i$ are sent to infinity. Since the \GWs\ is presented as formal series expanded at $\epsilon_i=0$, taking such a limit first requires an analytic continuation which in most cases is out of reach with today's methods. On the other hand, a restriction to locally self-dual torus actions, in the sense of \cite[Def.~1.1]{Sch26:CY5TV}, is quite powerful on the GW side while being an a priori ill-defined operation on the PT side.

Thus, by the lack of methods to control the GW and \PTs\ simultaneously we are currently forced to resort to numerical methods to provide evidence for \Cref{conj: intro GW PT} and the various other conjectures discussed in this paper. We hope that the case studies presented and questions raised will spark new ideas that hopefully lead to the development of novel techniques required to prove \Cref{conj: intro GW PT} and others.

\subsection{Membrane indices}
As already mentioned, many correspondences in enumerative geometry are expected to be explained by M-theory. \Cref{conj: intro GW PT} is no exception. It is the central speculation of the seminal work of Nekrasov and Okounkov \cite{NO14:membranes} that an analytic continuation of the generating series of K-theoretic Pandharipande--Thomas invariants in the box-counting variable $q$ is provided by the index of the yet-to-be-constructed moduli space of M2-branes on $\CYfi$. We will refer to these invariants as \meminds\ for short. A parallel claim in Gromov--Witten theory was made by Brini and the second author in \cite[Conj.~M]{BS24:refGW}. The speculation is that there should exist some moduli space $\MTwoMod_{\beta}$ parametrising M2-branes in curve class $\beta$ on $\CYfi$ equipped with some line bundle $\cE$ (or some suitable element in K-theory). Its equivariant Euler characteristic
\begin{equation*}
    \Omega_\beta \coloneqq \chi_{\gpT} \big(\MTwoMod_{\beta}, \cE \big) \,,
\end{equation*}
viewed as a function on the torus $\gpT = \Gm_q \times \gpT'$, should then provide the analytic continuation from the PT to the GW limit in \Cref{conj: intro GW PT}. This is only after forming the generating series over all curve classes and taking the plethystic exponential:
\begin{equation}
    \label{eq: intro omega gen series}
    \Exp \sum_{\beta \neq 0} Q^{\beta} ~ \Omega_\beta\,.
\end{equation}
To this day this speculation can at most serve as a guiding principle since a proposal for the moduli space of M2-branes is only available in very few cases \cite{NO14:membranes,GPS26:5Hodge}. However, we may still test the numerical implications of the conjecture.

\begin{customconj}{B} (=\Cref{conj: gen GV,conj: PT GV})
    \label{conj: intro GW PT membr}
    Suppose $(\CYfi,\gpT)$ is an equivariantly Calabi--Yau fivefold. There exist rational functions $\Omega_\beta$ on $\gpT$ with constrained poles and coefficients in $\bZ[\frac{1}{2}]$ so that the series \eqref{eq: intro omega gen series} equates to the \GWs\ of $(\CYfi,\gpT)$. Moreover, if the fivefold is of the form \eqref{eq: intro CY5 PT} the series of \meminds\ also equates to the \PTs .
\end{customconj}

When $\CYfi = \CYth \times \Aaff{2}$ and $\gpT$ fixes the holomorphic threeform of $\CYth$ this conjecture reduces to a weak version of the Gopakumar--Vafa conjecture as explained in \cite[Sec.~0.3]{Sch26:CY5TV}.

\textbf{Evidence for \Cref{conj: intro GW PT membr}.} In all cases \labelcref{item: intro check first}--\labelcref{item: intro check last} for which we checked \Cref{conj: intro GW PT} we also verified that the common lift of the PT and \GWs\ indeed satisfies the assertions of \Cref{conj: intro GW PT membr}. We remark that earlier evidence for \Cref{conj: intro GW PT membr} mostly concerned the situation $\CYfi = \CYth \times \Aaff{2}$ or needed strong assumptions on the torus action. In particular, our numerical tests \labelcref{item: intro local Pn} and \labelcref{item: intro check last} go well beyond these limits.

\subsection{Geometric engineering of gauge theories}
A correspondence that is well-known in the physics literature but far less studied in the mathematical one is the concept of geometric engineering of supersymmetric gauge theories on $\Aaff{2}$. In the case of pure supersymmetric $\mr{SU}(N)$ Yang--Mills theory with Chern--Simons level $m\in\bZ$ on $\Aaff{2}\times S^1_u$, the correspondence asserts that the generating series of GW invariants of a suitable toric Calabi--Yau threefold $X_{N,m}$ equates to the generating series $$\ZNek_{N,m}\big(q,q^{-1}\big)$$ of \smash{$\Gm_q$}-equivariant $\hat{A}$-genera of instanton moduli spaces modulo perturbative corrections. Here, the \smash{$\Gm_q$}-action on instanton moduli spaces descends from the anti-diagonal action of torus on the affine plane. In other words, \smash{$\ZNek_{N,m}(q,q^{-1})$} is the (K-theoretic) 5d Nekrasov partition function on the self-dual $\Omega$ background.

In the physics literature it was long expected that there should exist a suitable quantum field theory, usually called the refined topological string, which generalises geometric engineering to the general $\Omega$ background, i.e.~beyond anti-diagonal torus actions on the affine plane. However, an explicit worldsheet formulation of such a theory remained elusive for many years. Only recently Brini and the second author proposed that the equivariant Gromov--Witten theory of $(X_{N,m} \times \Aaff{2}, \gpT)$ should provide this sought-after refinement. Here, the refinement arises from the ability to choose an arbitrary $\gpT$-action on the affine plane and the observation that the \smash{$\Gm_q$}-equivariant Gromov--Witten invariants of $X_{N,m} \times \Aaff{2}$ coincide with the ones of $X_{N,m}$ with the generator of \smash{$\mr{Lie}\,\Gm_q$} effectively acting as a genus counting variable. Let us make our claim more precise.

\begin{customconj}{C} (=\Cref{conj: gauge GW correspondence})
    \label{conj: intro gauge GW correspondence}
    For $|m|<N$ we have
    \begin{equation}
        \label{eq: intro gauge GW correspondence}
        \exp \GWarg{X_{N,m} \times \Aaff{2}}{\gpT}{}(\epsilon_4,\epsilon_5) = \ZNek_{N,m}\big(\re^{u\epsilon_4},\re^{u\epsilon_5}\big)
    \end{equation}
    where $\epsilon_4$ and $\epsilon_5$ are the tangent $\gpT$-weights of $\Aaff{2}$.
\end{customconj}

This conjecture should generalise in several directions: a suitable modification of the threefold ought to accommodate non-trivial fundamental matter, quiver gauge theories and other gauge groups. On the surface side, we expect an extension to quotients $[\Aaff{2}/\Gamma]$ for finite subgroups $\Gamma\subset\SL_2$. We leave the investigation of these generalisations to future work.

\textbf{Evidence for \Cref{conj: intro gauge GW correspondence}.} In the self-dual limit $\epsilon_4 + \epsilon_5 =0$ the correspondence was proven by Iqbal and Kashani-Poor in \cite{IKP06:SUNtopstr}. We provide numerical evidence for the conjecture in low genus and curve degree beyond the self-dual limit in \Cref{ev: gauge GW correspondence}.

Looking at equation \eqref{eq: intro gauge GW correspondence}, note also that in agreement with M-theoretic intuition the genus counting variable $u$ of the \GWs\ can be identified with the radius of the additional $S^1$ providing the elventh space-time dimension. Indeed, this matches the expectation that in the limit where this circle gets contracted, i.e.~we perform a Taylor expansion around $u=0$, M-theory recovers the refined topological string.

We remark that the hypothesis on the Chern--Simons level $m$ in \Cref{conj: intro gauge GW correspondence} is required to ensure that the left-hand side \GWs\ solely depends on the equivariant variables $\epsilon_4$ and $\epsilon_5$. In principle the series may depend on other elements in $\LieT^\vee$ which is what we experimentally observe when $|m|\geq N$. It is natural to ask whether these additional variables admit any interpretation on the left-hand gauge theory side of \Cref{conj: gauge GW correspondence}.

One may interpret the restricted dependence of the \GWs\ on equivariant variables as a consequence of \Cref{conj: intro GW PT}. On the PT side this is a well-known feature called rigidity. It was proven in \cite[Prop.~7.5]{NO14:membranes} that whenever the moduli space of stable pairs on a Calabi--Yau threefold is proper, its K-theoretic PT invariant will only depend on the character $\kappa$ scaling the holomorphic threeform. This means, one may compute these invariants under any restriction of the torus action fixing $\kappa$. A particularly potent restriction is the so-called refined limit where the invariants are governed by the refined topological vertex as explained in \cite{Arb21:KthDTHilbSurf}. This provides enough leverage to equate the \PTs\ of $X_{N,m}$ to Nekrasov's partition function.

\begin{customthm}{D} (=\Cref{thm: gauge PT correspondence})
    \label{thm: intro gauge PT correspondence}
    For $|m|<N$ we have $$\PTarg{}{X_{N,m}}{X_{N,m} \times \Aaff{2}}{\Gm_q}{\gpT'}(q,\kappa) = \ZNek_{N,m}\big(q\kappa^{1/2},q^{-1}\kappa^{1/2}\big)\,.$$
\end{customthm}

We note that this result is well compatible with \Cref{conj: intro GW PT,conj: intro gauge GW correspondence}.

We acknowledge that there are many instances in the physics literature where the refined topological string partition function has been evaluated by means of the refined topological vertex. See for instance \cite{Tak08:RefVertInstanton}. So the only task in proving \Cref{thm: intro gauge PT correspondence} is to demonstrate the compatibility between the vertex formalism in K-theoretic PT theory with the one used in physics literature. We only do this in the particular case of $X_{N,m}$ but we expect that more generally, the vertex rules described by Arbesfeld in \cite{Arb21:KthDTHilbSurf} should indeed simplify to the ones presented in \cite[Sec.~4.1]{BMPTY14:NonLagBrnJunc}. Establishing this identification would therefore immediately generalise \Cref{thm: intro gauge PT correspondence} beyond pure gauge theory to all situations previously studied in physics literature. We leave this bookkeeping exercise for future research.

\subsection{Numerical methods and documentation}
Most evidence presented for \Cref{conj: intro GW PT,conj: intro GW PT membr,conj: intro gauge GW correspondence} in this paper is supported by direct numerical calculations. All code used in these calculations together with extensive documentation can be found in \cite{website}. 

\subsubsection{Maps}
On the side of Gromov--Witten theory, our computations are streamlined by the Julia package \texttt{GW\_CY5.jl}, which
\begin{itemize}
    \item uses \texttt{GKMtools.jl} \cite{Holmes_Muratore_2025a} to compute equivariant Gromov--Witten invariants of the relevant spaces up to a certain degree and genus,
    \item computes the leading terms of the membrane indices $\Omega_\beta$ in the variables $u$ and $\epsilon_i$ using the previously obtained Gromov--Witten invariants, and
    \item automatically compares the result with any predefined predicted value of $\Omega_\beta$.
\end{itemize}
In practice, the prediction comes from explicit computations in Pandharipande--Thomas theory via Conjectures~\ref{conj: intro GW PT}, \ref{conj: intro GW PT membr}, and the explicit expression in \Cref{conj: intro gauge GW correspondence}. We refer the interested reader to our documentation \cite{website} for details and especially to \Cref{ex: A1 x C3 GW,ex: A1 x C3 Omega} for working examples.

\subsubsection{Sheaves}
On the side of Pandharipande--Thomas theory we use a SageMath package \texttt{PT\_CY5.sage} that allows to compute K-theoretic PT invariants of toric varieties via the vertex formalism. This code greatly relies on Liu's implementation of the K-theoretic PT vertex in the SageMath package \texttt{boxcounting} \cite{Liu:boxcounting}. Additionally to Liu's implementation our package supports Nekrasov--Okounkov insertions which are required in the study of fivefolds beyond $\CYth \times \Aaff{2}$. Again, we refer the interested reader to our documentation and to \Cref{ex: running 3,ex: running 4} for two working examples.

\subsection*{Acknowledgements} 
We thank Noah Arbesfeld, Andrea Brini, Henry Liu, Alessandro Giacchetto, Davesh Maulik, Sergej Monavari and Rahul Pandharipande for fruitful discussions and helpful comments. The first author especially thanks Giosu{\`e} Muratore for continued fruitful collaboration on \texttt{GKMtools.jl} \cite{Holmes_Muratore_2025b}.
Finally, we thank the IT Services Group of D-MATH at ETH Z{\"u}rich for continued support in using their Compute Clients.
The first author did this work during his PhD studies at the Institute of Science and Technology Austria (ISTA), supported by
the ERC grant ViaFiPoS, No 101199663 and, from September 2026, by a DOC Fellowship of the Austrian Academy of Sciences.
The second author was supported by the DFG Walter Benjamin Fellowship 576663726 and the SNF grant SNF-200020-219369.

\subsection*{Statement on the use of generative AI}
The mathematical content of the paper is due to the authors. Anthropic's \texttt{Claude} was solely used to improve the wording and grammar of the paper. While most of \texttt{GW\_CY5.jl} is human-written, we used OpenAI's \texttt{Codex} with \texttt{GPT-5.2} to \texttt{GPT-5.6} and Anthropic's \texttt{Claude Code} with \texttt{Opus 4.5} to \texttt{Opus 5} to implement certain parts of it, including the formula for $\ZNek_{N,m}$ in \Cref{conj: gauge GW correspondence}.
We carefully checked and corrected the resulting code.

\section{Setup, methods and central conjectures}
\label{sec: setup}
This section introduces the main enumerative invariants investigated in this note: K-theoretic Pandharipande--Thomas invariants of threefolds and Gromov--Witten invariants and \meminds\ of Calabi--Yau fivefolds. We discuss the methods we will later employ for their computation and mention the central conjectures relating the different invariants.

\subsection{Geometric preliminaries}
First, we recall some necessary geometric background.

\subsubsection{Calabi--Yau GKM spaces}
In the following let $\CYfi$ be a smooth quasi-projective variety with the action of a torus $\gpT\cong\Gm[r]$. We say that $(\CYfi,\gpT)$ is \define{equivariantly Calabi--Yau} if $\CYfi$ admits a nowhere vanishing $\gpT$-invariant holomorphic volume form. In other words, there must be a $\gpT$-equivariant isomorphism $\omega_{\CYfi}\cong \cO_{\CYfi}$. We will also often call such torus actions Calabi--Yau.

We say that $(\CYfi,\gpT)$ is a \define{GKM space} if the $\gpT$-action has finitely many fixed points and one-dimensional torus orbits, and at least one fixed point. We are also going to assume that each one-dimensional torus orbit closure contains at least one torus fixed point. This means all orbit closures are isomorphic to either $\bP^1$ or $\Aone$. We will refer to such a subvariety as a $\gpT$\define{-stable} $\bP^1$ respectively $\Aone$.

We remark that there are varying definitions for GKM spaces in the literature (cf.~\cite{GKM98:EqCohKosLocThm,GKZ24:LowDimGKM}). The characterisation we choose here is what is best suited for our applications.

\subsubsection{GKM graphs}
To a GKM space $(\CYfi,\gpT)$ we associate a combinatorial device recording all geometric data relevant for later applications. This is the so-called \define{GKM graph} $G$ of $(\CYfi,\gpT)$. It is a decorated graph whose vertices $v$ are the $\gpT$-fixed points of $\CYfi$. The edges and leaves $e$ of $G$ are given by the $\gpT$-stable $\bP^1$s respectively $\Aone$s. Every half-edge $h=(e,v)$ is moreover decorated by a $\gpT$-weight $\epsilon_{h}\in \LieT^\vee$. This weight is selected as follows: If $p_v$ denotes the torus fixed point associated to $v$ and $C_e$ is the $\gpT$-stable line corresponding to $e$ then
\begin{equation*}
    \epsilon_{h} = c^{\gpT}_1(T_{p_v}C_e)
\end{equation*}
is its tangent weight at the fixed point.

\begin{example}
    \label{ex: A1 x C3 GKM graph}
    As an illustrating example, let us consider the quotient
    \begin{equation*}
        \CYfi = \Aaff{6}\setminus V(x_0 x_1) \Big/ \Gm
    \end{equation*}
    where $\Gm$ acts on affine six-space via
    \begin{equation*}
    \begin{tikzcd}[row sep=0em]
        \Gm \times \Aaff{6} \ar[r] & \Aaff{6}\,, \\
        \big(t,(x_0,x_1,x_2,x_3,x_4,x_5)\big) \ar[r,maps to] & (t x_0,t x_1,t^{-2} x_2,x_3,x_4,x_5) \,.
    \end{tikzcd}
    \end{equation*}
    We can identify $\CYfi = \Tot_{\bP^1}\!\big(\cO(-2)\big)\times \Aaff{3}$ which means the quotient is Calabi--Yau. The natural $\Gm[6]$-action on $\Aaff{6}$, whose tangent weights at the origin we denote by $\epsilon_0,\ldots,\epsilon_5$, descends to an action on the quotient $\CYfi$. The tangent weights at the two torus fixed points $0,\infty \in \bP^1 \subset \CYfi$ are
    \begin{equation*}
        (\epsilon_1-\epsilon_0,\epsilon_2+2\epsilon_0,\epsilon_3,\epsilon_4,\epsilon_5) \qquad\text{and}\qquad
        (\epsilon_0-\epsilon_1,\epsilon_2+2\epsilon_1,\epsilon_3,\epsilon_4,\epsilon_5) 
    \end{equation*}
    respectively. Note that $(\CYfi,\Gm[6])$ is \textit{not} equivariantly Calabi--Yau since the holomorphic five-form is scaled non-trivially with weight $-\sum_{i=0}^5 \epsilon_i$. However, if we pass to a subtorus $\gpT \cong \Gm[4]$ where this weight vanishes then $(\CYfi,\gpT)$ is indeed equivariantly Calabi--Yau.
    
    We present the embedding of the GKM graph $G$ of $(\CYfi,\Gm[6])$ into $\bR^2$ induced by the moment map of some subtorus $\Gm[2] \subset \Gm[6]$ in \Cref{fig: A1 x C3 GKM graph}. Note that the graph contains a single edge associated to the zero section $\bP^1\subset \CYfi$ connecting the two torus fixed points.
\end{example}

\begin{figure}
\centering
\begin{tikzpicture}[scale=1.1]
    \draw (-1,-1) -- (0,0) -- (3,0) -- (4,-1);
    \node[circle,inner sep=2pt,fill=black] at (0,0){};
    \node[circle,inner sep=2pt,fill=black] at (3,0){};
    \node[below] at (0.6,0) {$\epsilon_1-\epsilon_0$};
    \node[below] at (2.4,0) {$\epsilon_0-\epsilon_1$};
    \node[left] at ($(0,0)!0.6!(-1,-1)$) {$\epsilon_2+2\epsilon_0$};
    \node[right] at ($(3,0)!0.6!(4,-1)$) {$\epsilon_2+2\epsilon_1$};
    \draw (0,0) -- (0.64, 1.26);
    \draw (0,0) -- (-0.44, 1.34);
    \draw (0,0) -- (-1.26, 0.64);
    \node[left] at ($(0,0)!0.65!(0.64, 1.26)$) {$\epsilon_3$};
    \node[left] at ($(0,0)!0.6!(-0.44, 1.34)$) {$\epsilon_4$};
    \node[left] at ($(0,-0.1)!0.4!(-1.26, 0.6)$) {$\epsilon_5$};
    \draw (3,0) -- (3.64, 1.26);
    \draw (3,0) -- (2.56, 1.34);
    \draw (3,0) -- (1.74, 0.64);
    \node[left] at ($(3,0)!0.7!(3.64, 1.26)$) {$\epsilon_3$};
    \node[left] at ($(3,0)!0.6!(2.56, 1.34)$) {$\epsilon_4$};
    \node[left] at ($(3,-0.1)!0.4!(1.74, 0.6)$) {$\epsilon_5$};
\end{tikzpicture}
\caption{The GKM graph of $\CYfi = \Tot_{\bP^1}\!\big(\cO(-2)\big)\times \Aaff{3}$.}
\label{fig: A1 x C3 GKM graph}
\end{figure}

\subsection{Maps}
Next we want to recall features of Gromov--Witten invariants of GKM spaces. Let us first fix their notation.

\subsubsection{Gromov--Witten invariants}
As before, suppose $\CYfi$ is a smooth quasi-projective variety with the action of a torus $\gpT\cong \Gm[r]$. We consider the $\gpT$-equivariant genus-$g$ Gromov--Witten invariant of $\CYfi$ in curve class $\beta$:
\begin{equation*}
    \GWarg{\CYfi}{\gpT}{g,\beta} =\intEquiv{\mgpT}_{[\Mbar_g(\CYfi,\beta)]^{\vir}_{\gpT}} 1\,.
\end{equation*}
In case the moduli space of stable maps to $\CYfi$ is non-proper we define the above intersection number as its $\gpT$-equivariant residue. For this we will always implicitly assume that the locus of $\gpT$-fixed stable maps is proper. This way, equivariant Gromov--Witten invariants are generally rational functions in the equivariant parameters:
\begin{equation*}
    \GWarg{\CYfi}{\gpT}{g,\beta}\in \mr{Frac}\big(H^*_{\gpT}(\mr{pt})\big) \cong \bQ(\epsilon_1,\ldots, \epsilon_r) \eqqcolon Q_{\gpT}\,.
\end{equation*}
Their denominator and numerator are of homogenous degree and the difference of the degrees is precisely given by the virtual dimension of the moduli problem.

We will write 
\begin{equation*}
    \GWarg{\CYfi}{\gpT}{\beta} \coloneqq \sum_{g\geq 0} u^{2g-2} ~ \GWarg{\CYfi}{\gpT}{g,\beta} 
\end{equation*}
for the all-genus generating series of Gromov--Witten invariants where $u$ is a formal variable. We remark that when $\CYfi$ is a Calabi--Yau fivefold --- the main setting of this paper --- this variable is redundant as each invariant $\GWarg{\CYfi}{\gpT}{g,\beta}$ is a rational function in the equivariant parameters of homogenous degree $2g-2$. Nevertheless, we keep the variable for bookkeeping purposes.

Moreover, we form the generating series over all non-zero effective curve classes $\beta\in \Heff{\CYfi}$:
\begin{equation*}
    \GWarg{\CYfi}{\gpT}{} \coloneqq \sum_{\beta \neq 0} Q^{\beta} ~ \GWarg{\CYfi}{\gpT}{\beta}\,.
\end{equation*}
This series takes values in a suitable completion of the semigroup ring of $\Heff{\CYfi}$ with coefficients in $Q_{\gpT} (\!(u)\!)$. We denote this ring as
\begin{equation*}
    Q_{\gpT} (\!(u)\!)\llbracket \Heff{\CYfi} \rrbracket\,.
\end{equation*}


\subsubsection{Vertex formalism and Hodge integrals}
For GKM spaces $(\CYfi,\gpT)$ virtual localisation \cite{GP97:virtloc} decomposes Gromov--Witten invariants as a sum over $\gpT$-fixed loci of the moduli space of stable maps. These fixed loci are labelled by decorated graphs $\Gamma$ together with a morphism of graphs $f:\Gamma \rightarrow G$ to the GKM graph of $(\CYfi,\gpT)$. Vertices $v$ of such a graph $\Gamma$ correspond to components of the domain curve that get contracted to a fixed point. This fixed point is the image of $v$ under $f$. Moreover, $v$ is decorated with an integer $g_v$ recording the arithmetic genus. Edges $e$ correspond to rational components of the domain mapping to $\gpT$-stable rational curves in $\CYfi$. They are decorated with an integer $d_e$ indicating the degree of the cover and $f(e)$ is chosen as the edge in $G$ that is being covered. Virtual localisation then yields the following formula \cite{GP97:virtloc,LS17:GWGKM}:
\begin{equation}
\label{eq: loc formula GW}\GWarg{\CYfi}{\gpT}{g,\beta} = \sum_{\Gamma,f} \frac{1}{|\Aut (\Gamma,f)|} \prod_{e\in E(\Gamma)} \!\!\! \mr{wt}(e,\Gamma) ~\prod_{v\in V(\Gamma)} \int_{\Mbar_{g_v,E(\Gamma)_v}} \!\! \frac{\prod_{h\in H(G)_{f(v)}} \epsilon_{h}^{|E(\Gamma)_v|} \HodgeLambda[g_v]{\epsilon_{h}}}{ \prod_{e\in E(\Gamma)_v}  \big( \frac{\epsilon_{f(e,v)}}{d_e}-\psi_{e}\big)}\,.
\end{equation}
Here, we denote by $E(\Gamma)_v$ the set of all edges adjacent to $v\in V(\Gamma)$ and $H(G)_{\tilde{v}}$ the set of all half-edges adjacent to a vertex $\tilde{v}\in V(G)$. The sum is over all tuples $(\Gamma,f)$ which are compatible with the discrete data $g,\beta$. Edges are weighted by an explicit rational function $\mr{wt}(e,\Gamma)$ in the torus weights and the splitting data of the normal bundle $N_{C_e/Z}$ (cf. \cite[Sec.~4.3.4]{LS17:GWGKM}). Vertices are weighted by Hodge integrals:
\begin{equation*}
    \int_{\Mbar_{g,n}} \frac{\prod_{i=1}^m \HodgeLambda[g]{\epsilon_{i}}}{ \prod_{j=1}^n  \big( z_j^{-1}-\psi_{j}\big)} \,,\qquad \HodgeLambda[g]{\epsilon} \coloneqq \sum_{k \geq 0} \lambda_k (-1)^k \epsilon^{g-k-1}\,.
\end{equation*}
By \cite{Holmes_Muratore_2025b}, the set of all relevant $(\Gamma,f)$ is completely determined by the GKM graph, and $\mr{wt}(e,\Gamma)$ can be computed from the edge weights without precise knowledge of the splitting of $N_{C_e/Z}$.
The Julia package \texttt{GKMtools.jl} \cite{Holmes_Muratore_2025a} supports the resulting computation of equivariant Gromov--Witten invariants from the GKM graph.
Building on this, our Julia package \texttt{GW\_CY5.jl} \cite{website} computes the leading terms of $\GWarg{\CYfi}{\gpT}{\beta}$ as illustrated in the following example. 

\begin{example}
\label{ex: A1 x C3 GW}
Let us continue \Cref{ex: A1 x C3 GKM graph} concerning $\CYfi = \Tot_{\bP^1}\!\big(\cO(-2)\big)\times \Aaff{3}$ and compute $\GWarg{\CYfi}{\gpT}{\beta}$ in curve class $\beta = [\bP^1]$, $2 [\bP^1]$ and $3 [\bP^1]$.
First, we load the required packages, summon the GKM graph of $\CYfi$, and set the maximum genus for the computation to $1$.
\begin{shaded}
\begin{MyVerb}
\textcolor{code}{julia>} using Oscar, GKMtools, GW_CY5
\textcolor{code}{julia>} G = gkm_graph_of_example_1_1();
\textcolor{code}{julia>} beta = curve_class(G, Edge(1, 2));
\textcolor{code}{julia>} gMax = 1;
\end{MyVerb}
\end{shaded}
Then we use \texttt{get\_GW\_beta} to compute $\GWarg{\CYfi}{\gpT}{\beta}$ in degrees $1$, $2$ and $3$. The symbols {\texttt{t1}, \ldots, \texttt{t6}} represent the equivariant parameters $\epsilon_0,\ldots,\epsilon_5$ as in \Cref{ex: A1 x C3 GKM graph}.
\begin{shaded}
\begin{MyVerb}
\textcolor{code}{julia>} GW = get_GW_beta(G, [beta, 2*beta, 3*beta], gMax)
\end{MyVerb}
\end{shaded}
\begin{shaded}
\begin{MyVerb}
\textcolor{code}{julia>} GW[beta]
(1//12*t1*t4*t5*u^2 + 1//12*t1*t4*t6*u^2 + 1//12*t1*t5*t6*u^2 + t1 + 1//12*t2*t4*t5*u^2 + 1//12*t2*t4*t6*u^2 + 1//12*t2*t5*t6*u^2 + t2 + 1//12*t3*t4*t5*u^2 + 1//12*t3*t4*t6*u^2 + 1//12*t3*t5*t6*u^2 + t3)//(t4*t5*t6*u^2)
\textcolor{code}{julia>} GW[2*beta]
(1//24*t1*t4*t5*u^2 + 1//24*t1*t4*t6*u^2 + 1//24*t1*t5*t6*u^2 + 1//8*t1 + 1//24*t2*t4*t5*u^2 + 1//24*t2*t4*t6*u^2 + 1//24*t2*t5*t6*u^2 + 1//8*t2 + 1//24*t3*t4*t5*u^2 + 1//24*t3*t4*t6*u^2 + 1//24*t3*t5*t6*u^2 + 1//8*t3)//(t4*t5*t6*u^2)
\textcolor{code}{julia>} GW[3*beta]
(1//36*t1*t4*t5*u^2 + 1//36*t1*t4*t6*u^2 + 1//36*t1*t5*t6*u^2 + 1//27*t1 + 1//36*t2*t4*t5*u^2 + 1//36*t2*t4*t6*u^2 + 1//36*t2*t5*t6*u^2 + 1//27*t2 + 1//36*t3*t4*t5*u^2 + 1//36*t3*t4*t6*u^2 + 1//36*t3*t5*t6*u^2 + 1//27*t3)//(t4*t5*t6*u^2)
\end{MyVerb}
\end{shaded}
Note that after cancelling common factors of $u$ in the numerator and denominator, the appearing powers of $u$ are precisely $u^{-2}$ and $u^0$, compatibly with our maximum genus $1$.
\end{example}

\subsection{Membranes}
From now on we will always assume that $(\CYfi, \gpT)$ is an equivariantly Calabi--Yau fivefold. In \cite[Conj.~M]{BS24:refGW} Brini and the second author speculate that the equivariant Gromov--Witten invariants of such a target are governed by a smaller set of (almost) integer valued invariants called membrane indices. This conjecture is stated more explicitly in \cite[Conj.~A]{Sch26:CY5TV}. Recall that the all-genus generating series $\GWarg{\CYfi}{\gpT}{\beta}$ is a formal series in the genus variable $u$ and the equivariant parameters $\epsilon_1, \ldots,\epsilon_r\in \LieT^\vee$ generating $H^*_{\gpT}(\mr{pt})$. The conjecture then asserts that this formal series is the Laurent expansion of a rational function in \smash{$\re^{u \epsilon_1/2},\ldots,\re^{u \epsilon_r/2}$} around $u=0$ with constrained poles. Moreover, modulo subtracting multi-covering contributions similar to the definition of Gopakumar--Vafa invariants the coefficients of these rational functions should conjecturally lie in $\mathbb{Z}[\tfrac{1}{2}]$. 

\begin{conj}
\label{conj: gen GV}
    \cite[Conj.~A]{Sch26:CY5TV}
    Suppose $(\CYfi,\gpT)$ is an equivariantly Calabi--Yau fivefold. There exist rational functions
    \begin{equation*}
        \Omega_{\beta}(q_i) \in \bZ\big[\tfrac{1}{2}\big]\left[ q_{1}^{\pm 1/2}, \ldots, q_{r}^{\pm 1/2} , \, \left\{\frac{1}{1-\prod_i q_i^{n_i/2}}\right\}_{\boldsymbol{n}\in \bZ^{r}\setminus \{0\}} \,\right]
    \end{equation*}
    labelled by effective curve classes $\beta$ in $\CYfi$ such that under the change of variables $q_i= \re^{u \epsilon_i}$
    \begin{equation}
        \label{eq: generalised GV}
        \GWarg{Z}{\gpT}{\beta} = \sum_{k \,\mid\, \beta} \frac{1}{k} \Omega_{\beta/k}(q_i^k)
    \end{equation}
    where the sum is over all integers $k>0$ dividing $\beta$, i.e.~$k\beta' = \beta$ for some $\beta'\in\Heff{\CYfi}$.
\end{conj}

We will refer to the invariant $\Omega_{\beta}$ as the \define{\memind} of $\CYfi$ in curve class $\beta$. 

One of the objectives of this work is to provide numerical evidence for \Cref{conj: gen GV}. The conjecture was earlier investigated in \cite{Sch26:CY5TV} in the context of GKM spaces under a restrictive hypothesis on the torus action. This hypothesis essentially ensured that all quintuple Hodge integrals in the localisation formula \eqref{eq: loc formula GW} reduce to triple Hodge integrals which are well-known in the literature and for instance governed by the topological vertex \cite{OP04:HodgeIntUnknot,LLLZ09:TV,LLZ03:MarinoVafaFormula}. In this paper we will go beyond such restricted torus actions and embrace honest quintuple Hodge integrals in concrete numerical experiments.

Regarding these experiments, note that equation \eqref{eq: generalised GV} can be inverted to determine the expansion of $\Omega_{\beta}$ around $u=0$ from the collection of all \GWs\ $\GWarg{Z}{\gpT}{\beta/k}$ where $k$ divides $\beta$.
We implemented this in our Julia package \texttt{GW\_CY5.jl}.
In our numerical experiments, we will compute the true values of $\Omega_\beta$ up to some order in $u$ and compare the results to some educated guesses for their analytic continuation.
A full documentation of the workflow using \texttt{GW\_CY5.jl} is available online at \cite{website}. Let us consider an example.

\begin{example}
    \label{ex: A1 x C3 Omega}
    We continue \Cref{ex: A1 x C3 GW} concerning $\CYfi = \Tot_{\bP^1}\!\big(\cO(-2)\big)\times \Aaff{3}$.
    The function \texttt{get\_Omega\_beta} internally calls \texttt{get\_GW\_beta} to compute the required terms of $\GWarg{Z}{\gpT}{\beta/k}$.
    It then uses equation \eqref{eq: generalised GV} to compute $\Omega_\beta$ up to the desired power of $u$.
    The following code continues to use the variables set in \Cref{ex: A1 x C3 GW}.
    \begin{shaded}
    \begin{MyVerb}
\textcolor{code}{julia>} Omega = get_Omega_beta(G, [beta, 2*beta, 3*beta], gMax)
\textcolor{code}{julia>} Omega[beta]
(1//12*t1*t4*t5*u^2 + 1//12*t1*t4*t6*u^2 + 1//12*t1*t5*t6*u^2 + t1 + 1//12*t2*t4*t5*u^2 + 1//12*t2*t4*t6*u^2 + 1//12*t2*t5*t6*u^2 + t2 + 1//12*t3*t4*t5*u^2 + 1//12*t3*t4*t6*u^2 + 1//12*t3*t5*t6*u^2 + t3)//(t4*t5*t6*u^2)
\textcolor{code}{julia>} Omega[2*beta]
0
\textcolor{code}{julia>} Omega[3*beta]
0
    \end{MyVerb}
    \end{shaded}
    That is, we have computed $\Omega_{[\bP^1]}$, $\Omega_{2[\bP^1]}$ and $\Omega_{3[\bP^1]}$ up to genus $1$.
    In fact, the result matches the following formula, which is a special case of \Cref{conj: strip GW formula}
    \begin{equation*}
        \Omega_{d [\bP^1]} = -\delta_{d,1} \, \frac{2 \sinh \frac{u(\epsilon_3 + \epsilon_4 + \epsilon_5)}{2}}{\prod_{i=3}^5 2 \sinh \frac{u\epsilon_i}{2}}.
    \end{equation*}
    We have therefore \emph{verified the conjecture up to genus $g=1$} in this particular example.
    Our numerical experiments on the Gromov--Witten side will follow the same structure, with the only difference that \texttt{GW\_CY5.jl} supports automatic comparison to the conjectural values of $\Omega_\beta$.
\end{example}


\subsection{Sheaves}
In this section we consider the situation where $(\CYfi,\gpT)$ is an equivariantly Calabi--Yau fivefold together with the action of a one-dimensional subtorus $\Gm_q \subseteq \gpT$ whose fixed locus
\begin{equation*}
     \CYth = \CYfi^{\Gm_q}
\end{equation*}
is a threefold. In \cite{NO14:membranes} Nekrasov and Okounkov conjecture that the K-theoretic Donaldson--Thomas invariants of $\CYth$ with a suitable insertion depending on the normal bundle $N_{\CYth} \CYfi$ equate to the \memind\ of $\CYfi$. We begin by recalling their setup. Here, we are going to restrict ourselves to the situation where $\CYth$ is connected. (See \cite[Sec.~3.2 \& 5.5]{NO14:membranes} and \cite{dZNPZ22:PlayingMth} for a discussion of possibly disconnected threefolds.) This means, $\CYfi$ must be a total space
\begin{equation}
    \label{eq: DT CY5}
    \CYfi = \Tot{}_{\CYth} \,\cL_4 \oplus \cL_5\,.
\end{equation}
of two line bundles $\cL_4$ and $\cL_5$ on $\CYth$. We assume that the torus \smash{$\Gm_q$} acts with positive weight $q$ on $\cL_4$ and negative weight $q^{-1}$ on $\cL_5$.

\subsubsection{Pandharipande--Thomas invariants}
\label{sec: PT setup}
Let us denote by $P_n(\CYth,\beta)$ the moduli space of stable pairs on a threefold $\CYth$ with support $\beta$ and Euler characteristic $n$. This moduli space admits a universal stable pair $(\bF,s)$ on the product $P_n(X,\beta)\times X$. We denote by $\pi$ and $\rho$ the projections to the first respectively second factor.

Now suppose $\CYfi$ is as in \eqref{eq: DT CY5} and we chose an isomorphism $\gpT \cong \Gm_q \times \gpT'$. Then following \cite[Def.~3.1]{NO14:membranes} we consider the Euler characteristic of the virtual structure sheaf of the moduli space of stable pairs twisted by a certain square root:
\begin{equation}
    \widetilde{\cO}^{\vir} = \cO^{\vir} \otimes \left( K_{\vir} \otimes \det \big(\mathbf{R}\pi_{*} \,\bF \otimes \rho^*(\cL_4-\cL_5)\big) \right)^{1/2}\,.
\end{equation}
In the following we will often call the twist by the above determinant a Nekrasov--Okoukov insertion. 

We form the generating series of these K-theoretic Pandharipande--Thomas invariants:
\begin{equation}
\label{eq: defn PT series}
\begin{split}
    \PTarg{\beta}{\CYth}{\CYfi}{\Gm_q}{\gpT'} & \coloneqq \sum_{n\in \bZ} (-1)^{n+(\cL_4,\beta)}q^{n+\frac{K_X\cdot\beta}{2}} ~\chi_{\gpT'}\left(P_n(X,\beta),\widetilde{\cO}^{\vir}\right) \\
    \PTarg{}{\CYth}{\CYfi}{\Gm_q}{\gpT'} & \coloneqq 1+ \sum_{\beta \neq 0} Q^\beta ~ \PTarg{\beta}{\CYth}{\CYfi}{\Gm_q}{\gpT'} \,.
\end{split}
\end{equation}

\subsubsection{K-theoretic maps/pairs correspondence}
By definition the generating series of K-theoretic PT invariants is a formal Laurent series in $q^{1/2}$. However conjecturally, it is the Laurent expansion of a rational function in $q^{1/2}$ with constrained poles.

\begin{conj}
    \label{conj: PT GV}
    \cite[Conj.~2.1]{NO14:membranes} Suppose we are in the situation \eqref{eq: DT CY5}. Then there exist rational functions
    \begin{equation*}
        \Omega_{\beta} \in \bZ\big[\tfrac{1}{2}\big]\left[ q^{\pm 1/2},q_2^{\pm 1/2} \ldots, q_{r}^{\pm 1/2} , \, \left\{\frac{1}{1-q^{n_1/2}\prod_i q_i^{n_i/2}}\right\}_{\boldsymbol{n}\in \bZ^{m}\setminus \{0\}} \,\right]
    \end{equation*}
    labelled by effective curve classes $\beta$ in $\CYth$ such that
    \begin{equation}
        \label{eq: PT rational lift}
        \PTarg{}{\CYth}{\CYfi}{\Gm_q}{\gpT'} = \Exp \sum_{\beta \neq 0} Q^{\beta}~ \Omega_{\beta}\,.
    \end{equation}
\end{conj}
In this \namecref{conj: PT GV}, $\Exp$ denotes the plethystic exponential which acts on a formal series $f(q_i,q,Q)$ with $f(q_i,q,0)=0$ as
\begin{equation*}
    \Exp f\big(q_i,q,Q\big) = \exp\left(\sum_{k>0} \frac{1}{k}\, f\big(q_i^k,q^k,Q^k\big)\right)\,.
\end{equation*}

It is no coincidence that we used the same notation for the rational lifts in \Cref{conj: gen GV,conj: PT GV}: It is the central speculation of the seminal paper of Nekrasov and Okounkov that the K-theoretic \PTs\ equates to the index of M2-branes on $\CYfi$ \cite{NO14:membranes}. Thus, performing a change of variables
\begin{equation}
    \label{eq: GW PT change of vars}
    q = \re^{u \epsilon} \qquad \text{and} \qquad q_i = \re^{u \epsilon_i}
\end{equation}
and expanding around $u=0$ we should expect a match with the \GWs\ of $\CYfi$ by \Cref{conj: gen GV}. Here, we wrote \smash{$\epsilon=c_1^{\Gm}(q)$} for the generator of $H_{\Gm_q}^*(\mr{pt})$.

\begin{conj}[\GWPT]
    \label{conj: GW PT}
    \cite[Conj.~7.19]{BS24:refGW} Suppose we are in situation \eqref{eq: DT CY5}. Then under the lift described in \Cref{conj: PT GV} and under the change of variables \eqref{eq: GW PT change of vars} we have
    \begin{equation*}
        \PTarg{}{\CYth}{\CYfi}{\Gm_q}{\gpT'} = \exp \, \GWarg{\CYfi}{\Gm_q \times \gpT'}{}\,.
    \end{equation*}
\end{conj}

\subsubsection{Consequence I: Hidden Symmetries}
\Cref{conj: GW PT} has several consequences which may be investigated in their own right. For instance, note that the \GWs\ does not depend on the choice of a one-dimensional subtorus $\Gm_q \subseteq \gpT$ up to an identification of equivariant variables. Hence, the same must be true for the other side of the correspondence.

\begin{conj}
    \label{conj: PT sym}
    After analytic continuation in $q$, a suitable identification of equivariant variables and possibly restricting the curve class variables, the series
    \begin{equation*}
        \PTarg{}{\CYth}{\CYfi}{\Gm_q}{\gpT'}
    \end{equation*}
    is independent of the choice $\Gm_q \subset \gpT$. 
\end{conj}

\begin{rmk}
    If $\CYth$ is a Calabi--Yau threefold, the $\gpT'$-action on $\CYth$ is Calabi--Yau and $\cL_4 = \cL_5 = \cO_{\CYth}$ then the \GWPT\ reduces to the original GW/PT correspondence \Cref{conj: GW PT} in cohomology \cite[Conj.~3.3]{PT09:StablePairs}. See \cite[Sec.~7.2.3]{BS24:refGW}. In this case \Cref{conj: PT sym} coincides with the well-known $q \mapsto q^{-1}$ symmetry of the \PTs\ in cohomology \cite{Tod10:PTsym,Bri11:HallAlgCurveCountInv}.
\end{rmk}

\begin{rmk}
    In the context of local curves, we remark that this unexpected symmetry is related to 3d mirror symmetry of the Hilbert scheme of points on $\Aaff{2}$ as explained in \cite[Sec.~5.5]{Oko19:Takagi}.
\end{rmk}

We will numerically investigate \Cref{conj: PT sym} for a local curve in \Cref{ex: running 3,ex: running 4}. We will study the symmetry in several other examples in \Cref{sec: local proj spaces,sec: CY4 vanishing} and \Cref{rmk: sym PT strip}.

\subsubsection{Consequence II: Rigidity}
Another consequence of \Cref{conj: GW PT} is \textit{rigidity} of the \GWs\ as we are going to recall now. Suppose the fivefold is a product of a Calabi--Yau threefold and the affine plane:
\begin{equation*}
    \CYfi = \CYth \times \Aaff{2}\,.
\end{equation*}
On the sheaf side, it was proven in \cite[Prop.~7.5]{NO14:membranes} that whenever the moduli space of stable pairs $P_n(X,\beta)$ is proper, the Pandharipande--Thomas invariant
\begin{equation*}
    \chi_{\gpT'}\left(P_n(X,\beta),\cO^{\vir}\otimes K_{\vir}^{1/2}\right)
\end{equation*}
only depends on a single equivariant variable $\kappa$ (or better its square root): the character of the induced $\gpT'$ action on the holomorphic three-form, ie.~$\omega_X \cong \cO_X\otimes \kappa$. This restricted dependency is what is usually called rigidity. Note that by \Cref{conj: GW PT} we should find the same feature on the Gromov--Witten side.

\begin{conj}
    \label{conj: rigidity} \cite[Conj.~2.13]{BS24:refGW}
    Suppose $(X\times \Aaff{2},\gpT)$ is equivariantly Calabi--Yau and $\beta$ is a curve class in $\CYth$ for which $\Mbar_g(X,\beta)$ is proper for all $g\geq 0$. Then
    \begin{equation*}
        \epsilon_4 \epsilon_5 \cdot \GW_{\beta}(X \times \Aaff{2},\gpT)
    \end{equation*}
    is a power series in $u$ with coefficients in $\bQ[\epsilon_4,\epsilon_5]$ where $\epsilon_4$ and $\epsilon_5$ are the tangent $\gpT$-weights of the affine plane.
\end{conj}

The conjecture holds when $\epsilon_4 + \epsilon_5=0$ because of the following dimensional reduction as explained for instance in \cite[Cor.~2.7]{BS24:refGW}:
    \begin{equation}
        \label{eq: GW 5 to 3 reduction strip}
        \GWarg{X\times\Aaff{2}}{\gpT}{} = \GWarg{X }{\gpT}{} \, \Big|_{u\mapsto \ri u \epsilon_4}\,.
    \end{equation}
A proof of the \namecref{conj: rigidity} beyond this limit is desirable as it would provide great leverage for the calculation of Gromov--Witten invariants of toric Calabi--Yau fivefolds. We will present numerical evidence for the \namecref{conj: rigidity} beyond the limit $\epsilon_4 + \epsilon_5 = 0$ in \Cref{ex: chain of O(-1)s} and \Cref{ev: gauge GW correspondence}.

\subsubsection{The K-theoretic vertex}
\label{sec: Kth vertex}
Now suppose that $\CYth$ is toric and $\gpT'$ acts on $\CYth$ through its dense open torus in a way that $(\CYth,\gpT')$ is a GKM space. In this setting, virtual localisation decomposes the \PTs\ as a weighted sum indexed by labellings of the edges of the GKM graph by integer partitions. We will recall the formula in this section.

Following \cite[Sec.~2]{PT09:vertex}, to each $\gpT'$-fixed stable pair $[\cO_X \rightarrow F] \in P_n(X,\beta)^{\gpT'}$ we can associate a collection of partitions $\boldsymbol{\mu}=(\mu_h)_{h\in H(G)}$ indexed by the half-edges of the GKM graph $G$ of $\CYth$. These partitions record the asymptotics of the scheme theoretic support of the restriction of $F$ to an affine chart around each torus fixed point. For this assignment we fix a cyclic order of the three half-edges adjacent to every vertex in a way that $\mu_h = \mu_{h'}^{\mr{t}}$ whenever two half-edges $h$ and $h'$ belong to the same edge. Via a \v{C}ech complex calculation, as an element in K-theory, the virtual normal bundle of the $\gpT'$-fixed locus can be decomposed into contributions labelled by edges and vertices of the GKM graph \cite[Thm.~2]{PT09:vertex}. In the situation where $\cL_4=\cL_5=\cO_{X}$ this decomposition leads to the following formula.

\begin{thm}
\label{thm: PT vertex formula no ins}
If $\cL_4=\cL_5=\cO_X$, we have
    \begin{equation}
    \label{eq: PT vertex formula no ins}
          \PTarg{}{\CYth}{X\times \Aaff{2}}{\Gm_q}{\gpT'} = \sum_{\boldsymbol{\mu}} \prod_{e=(h,h')}Q^{|\mu_{h}|[C_e]} \, \hat{E}_{\mu_{h}}\big(\boldsymbol{a}_h;\boldsymbol{q}_h;q\big) ~ \prod_{v} \hat{V}^{\PT}_{\boldsymbol{\mu}_v}\big(\boldsymbol{q}_v;q\big)\,.
    \end{equation}
\end{thm}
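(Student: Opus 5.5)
The plan is to apply $\gpT'$-equivariant virtual localisation in K-theory, in the form of the Thomason/Graber--Pandharipande localisation theorem for the twisted virtual structure sheaf. By \cite[Sec.~2]{PT09:vertex}, the $\gpT'$-fixed locus of $P_n(X,\beta)$ is a disjoint union of compact components. When the GKM graph has only isolated fixed pairs these components are points labelled by collections of partitions $\boldsymbol{\mu}$. In general they are products of such loci, but we can still work fixed point by fixed point: after passing to a generic subtorus, we reduce to the case where the fixed pairs are isolated and labelled by $\boldsymbol{\mu}$ together with the $3$d box configurations at the vertices.

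At each fixed point $P$, localisation gives the contribution
\[
\hat{a}\big(-T^{\vir}_P\big)\cdot \big(\text{NO twist at }P\big)\cdot(-1)^{n}q^{n+K_X\cdot\beta/2}.
\]
Here $\hat a$ denotes the symmetrised ($\hat A$-type) Euler class that arises from $\cO^{\vir}\otimes K_{\vir}^{1/2}$. Since $\cL_4=\cL_5=\cO_X$ carry the $\Gm_q$-weights $q^{\pm1}$, the Nekrasov--Okounkov insertion becomes $\det\big(\chi(F)\otimes(q-q^{-1})\big)^{1/2}$, computed from the $\gpT'$-character of $H^0(F)$. The substantive work is to split three quantities as sums over vertices and edges of the GKM graph, using the Čech cover by the toric affine charts $U_v\cong\Aaff{3}$:
\begin{enumerate}
\item the tangent--obstruction character $T^{\vir}_P=\chi(\cO,\cO)-\chi(I^\bullet,I^\bullet)$;
\item the character of $\chi(F)$;
\item the exponent $n=\chi(F)$.
\end{enumerate}
This is exactly \cite[Thm.~2]{PT09:vertex}. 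It gives $T^{\vir}=\sum_v \mathsf V_v+\sum_e \mathsf E_e$, where $\mathsf V_v$ depends only on the local data $(\boldsymbol\mu_v,$ boxes$)$ and the tangent weights $\boldsymbol q_v$. The edge term $\mathsf E_e$ depends only on $\mu_h$, the edge weights $\boldsymbol q_h$, and the normal bundle degrees $\boldsymbol a_h$ of $C_e$. The analogous splitting holds for $\chi(F)$, where one uses the Čech complex on the overlaps $U_v\cap U_{v'}$, which are $\Gm\times\Aaff{2}$.

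Because $\hat a$ and the square-root determinant are multiplicative on characters, the full contribution factorises. I would then define
\[
\hat V^{\PT}_{\boldsymbol\mu_v}(\boldsymbol q_v;q)=\sum_{\text{box configs}}(-q)^{|\cdot|}\,\hat a(-\mathsf V_v)\,\det(\ldots)^{1/2},
\]
normalised so that the sum starts at $1$, with $\hat E_{\mu_h}$ defined similarly. With these definitions, summing over box configurations at each vertex independently yields \eqref{eq: PT vertex formula no ins}. The factor $Q^{|\mu_h|[C_e]}$ appears because $\beta=\sum_e|\mu_h|[C_e]$.

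The main obstacle is the square roots. One has to check three things:
\begin{enumerate}
\item $K_{\vir}^{1/2}$ and the NO determinant admit a consistent choice of square root locally at each vertex and edge;
\item the signs $(-1)^{n+(\cL_4,\beta)}$ and the shift $q^{K_X\cdot\beta/2}$ distribute correctly between edges and vertices;
\item the result is independent of the chosen cyclic orderings at the vertices.
\end{enumerate}
For these I would rely on the existence of canonical orientations/square roots \cite[Sec.~3]{NO14:membranes} and on the observation that each $\mathsf V_v$ has the form $\mathcal T_v-\overline{\mathcal T_v}\kappa^{-1}$. This form lets us take the square root of the vertex contribution as $\hat a(\mathcal T_v)$ up to a computable sign, which can then be absorbed into the normalisation of $\hat E$ and $\hat V^{\PT}$.
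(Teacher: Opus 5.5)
Your overall route is the one the paper has in mind, though the paper only sketches it by reference: $\gpT'$-equivariant K-theoretic localisation, then the Čech decomposition of \cite{PT09:vertex} into vertex and edge pieces, repeating Arbesfeld's DT computation \cite[Eq.~(4.17)]{Arb21:KthDTHilbSurf}. However, your reduction to isolated fixed pairs by passing to a generic subtorus fails, and this is exactly where the PT-specific work lies. For any subtorus $T''\subseteq\gpT'$ one has $P_n(X,\beta)^{\gpT'}\subseteq P_n(X,\beta)^{T''}$, so shrinking the torus can only enlarge fixed loci. Nor can $\gpT'$ be enlarged, since it already acts through the dense torus of $X$. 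Unlike in DT theory, the $\gpT'$-fixed locus of $P_n(X,\beta)$ genuinely has positive-dimensional components. Boxes lying in all three legs at a vertex carry $\bP^1$-labels, and a component is $\mathsf Q=\prod_v\mathsf Q_v$ with $\mathsf Q_v$ a moduli space of labelled box configurations with asymptotics $\boldsymbol\mu_v$ \cite{PT09:vertex}. This is the structure the paper uses in the proof of \Cref{prop: PT vertex formula with ins}. So $\hat V^{\PT}_{\boldsymbol\mu_v}$ is not a sum of characters $\hat a(-\mathsf V_v)$ over isolated box configurations.

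\textbf{Repairing the argument.} Localise over components instead: $\mathsf Q$ contributes $\chi\big(\mathsf Q,\cO^{\vir}_{\mathsf Q}\otimes K_{\vir}^{1/2}|_{\mathsf Q}\otimes\Lambda_{-1}(N^{\vir,\vee})^{-1}\big)$.
\begin{itemize}
\item The Čech computation behind \cite[Thm.~2]{PT09:vertex} must be run in families over $\mathsf Q$. It then gives vertex classes pulled back from the factors $\mathsf Q_v$, and edge classes that are constant $\gpT'$-representations depending only on $(\mu_h,\boldsymbol a_h,\boldsymbol q_h)$.
\item Likewise $\chi(F)$ splits, with vertex parts locally constant on $\mathsf Q_v$.
\item Künneth then factorises the contribution, and $\hat V^{\PT}_{\boldsymbol\mu_v}$ becomes a sum over components $\mathsf Q_v$ of equivariant Euler characteristics on $\mathsf Q_v$.
\item Your $\mathcal T_v-\kappa^{-1}\ol{\mathcal T_v}$ argument for the square root has to be carried out in $K_{\gpT'}(\mathsf Q_v)$, because $K_{\vir}|_{\mathsf Q}$ is a line bundle on $\mathsf Q$ rather than a character.
\end{itemize}
If you prefer to keep isolated fixed points, work with ideal sheaves instead, whose fixed points are isolated monomial ideals \cite{MNOPI}. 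Derive the DT analogue as Arbesfeld does, then transfer it to PT via the vertex-wise DT/PT comparison \cite[Thm.~1.3.1]{KLT26:DTPTvert}; this is the paper's second route.

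\textbf{Two smaller slips.} First, in the paper's convention both factors of $\Aaff{2}$ carry $\gpT'$-weight $\kappa^{1/2}$, so the $\gpT'$-equivariant Nekrasov--Okounkov determinant is trivial here. Your $\det\big(\chi(F)\otimes(q-q^{-1})\big)^{1/2}=q^{\chi(F)}$ is precisely the box-counting weight that the definition already inserts by hand, so your fixed-point contribution counts $q^n$ twice. Second, do not renormalise $\hat V^{\PT}$ to start at $1$. At vertices with several non-empty legs the minimal configuration has a non-trivial leading term, which belongs to the standard vertex weight and cannot be absorbed into the edges.
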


Here, $\hat{E}$ and $\hat{V}^{\PT}$ denote the so-called K-theoretic edge and vertex weights. An explicit formula for the edge weight can be found for instance in \cite[Sec.~4.2]{Arb21:KthDTHilbSurf}. It is evaluated at a partition $\mu_{h}$ labelling a half-edge of $e=(h,h')$ and depends on the degrees $\boldsymbol{a}_h=(a_2,a_3)$ of a splitting of the normal bundle of the $\gpT'$-stable curve $C_e$:
\begin{equation*}
    N_{C_e} \CYth \cong \cO_{\bP^1}(a_2) \oplus \cO_{\bP^1}(a_3)\,.
\end{equation*}
Moreover, it takes as an argument the K-theoretic tangent $\gpT'$-weights $\boldsymbol{q}_h = (q_h, q_1, q_2)$ at the fixed point associated to $h$ where all tuples must respect the cyclic ordering we chose earlier at every fixed point. 

Similarly, the vertex weight \smash{$\hat{V}^{\PT}_{\boldsymbol{\mu}_v}(\boldsymbol{q}_v;q)$} is evaluated at the collection of partitions $\boldsymbol{\mu}_v = (\mu_{h_1},\mu_{h_2},\mu_{h_3})$ incident to a fixed point and at its K-theoretic tangent weights $\boldsymbol{q}_v = (q_{h_1},q_{h_2},q_{h_3})$. We remark that each $q_{h_i}$ is the lift of the cohomology weight $\epsilon_{h_i}$ under the Chern character:
\begin{equation*}
    q_{h_i} = \re^{u \epsilon_{h_i}}\,.
\end{equation*}

See \cite[Eq.~(4.17)]{Arb21:KthDTHilbSurf} for a derivation of an analogous formula to \eqref{eq: PT vertex formula no ins} in the context of Donaldson--Thomas theory. This calculation is based on the analysis of the $\gpT'$-fixed locus of the moduli space of ideal sheaves and its virtual normal bundle by Maulik, Nekrasov, Okounkov and Pandharipande \cite{MNOPI}. Formula \eqref{eq: PT vertex formula no ins} then follows by repeating the same steps in Pandharipande--Thomas theory building on \cite{PT09:vertex} or alternatively from the fact that the K-theoretic Donaldson--Thomas and Pandharipande--Thomas vertex agree up to an explicit factor \cite[Thm.~1.3.1]{KLT26:DTPTvert}.

Both the K-theoretic edge and vertex weight have been implemented as part of the SageMath package \texttt{boxcounting} by Henry Liu \cite{Liu:boxcounting}. All numerical experiments with Pandharipande--Thomas invariants presented in this paper will rely on this code. Before showcasing a concrete example, let us explain how \Cref{thm: PT vertex formula no ins} must be modified in order to allow for a Nekrasov--Okounkov insertion coming from non-trivial ambient line bundles $\cL_4$ and $\cL_5$. It turns out, such an insertion only leads to a shift in $q$ at each vertex and a minor modification of the edge weights.

To describe the modification, note that we obtain a one-dimensional $\gpT'$-representation by restricting $\cL_i$ to a fixed point labelled by a vertex $v$ of the GKM graph. We denote this representation by $q_{v,i}$, $i\in\{4,5\}$, and similarly write $q_{h,i} \coloneqq q_{v,i}$ if $h$ is a half-edge adjacent to $v$. Moreover, for every edge $e=(h,h')$ of the GKM graph let $a_{h,i}$, $i\in\{4,5\}$, denote the degree of the restriction of the line bundle to the associated $\gpT'$-stable $\bP^1$:
\begin{equation*}
    \cL_i \big|_{C_e} \cong \cO(a_{h,i})\,.
\end{equation*}
For every edge $e=(h,h')$ of the GKM graph we introduce the modified edge weight
\begin{equation}
    \label{eq: new PT edge}
    \begin{split}
        \hat{E}^{\NO}_{\mu_h}\!\left(\boldsymbol{a}_h,a_{4},a_{5};\boldsymbol{q}_h,q_{h,4},q_{h,5};q\right) \coloneqq (-1)^{a_{4} |\mu_{h}|}  \, \hat{E}_{\mu_{h}}\!\left(\boldsymbol{a}_h;\boldsymbol{q}_h;q\right) q^{k_0(\mu_h)/2} \prod_{i=1}^5 q_{h,i}^{k_i(\mu_h)/2}\,.
    \end{split}
\end{equation}
With the notation $|\mu|\coloneqq \sum_i \mu_i$ and $n_{\mu} \coloneqq \sum_{(i,j)\in \mu} i$ the exponents in the last equation read
\begin{gather*}
    k_0(\mu) = (a_{4}+a_{5})|\mu|\,, \\
    k_1(\mu) = (a_{5}-a_{4}) \left(\tfrac{a_{4}+a_{5}+1}{2}|\mu| -a_{2} n_{\mu} -a_{3} n_{\mu^{\mr{t}}} \right) \,,\\[0.1em] %
    k_{2}(\mu) = (a_{5}-a_{4}) n_\mu \,, \qquad %
    k_{3}(\mu) = (a_{5}-a_{4}) n_{\mu^{\mr{t}}} \,,\\[0.3em]
    k_{4}(\mu) = (a_{4}+1)|\mu| -a_{2} n_{\mu} -a_{3} n_{\mu^{\mr{t}}} \,, \qquad %
    k_5(\mu) = -(a_{5}+1)|\mu| +a_{2} n_{\mu} +a_{3} n_{\mu^{\mr{t}}} \,.
\end{gather*}
Regarding the definition of $n_{\mu}$ we remark that we adopt the convention where the box shared by every non-empty partition $\mu$ has coordinate $(0,0)$.

\begin{prop}
    \label{prop: PT vertex formula with ins}
    We have
    \begin{equation*}
    \begin{split}
        &\qquad \PTarg{}{\CYth}{\CYfi}{\Gm_q}{\gpT'} = \\
        &\sum_{\boldsymbol{\mu}} \prod_{e=(h,h')}Q^{|\mu_{h}|[C_e]}\,\hat{E}^{\NO}_{\mu_h}\big(\boldsymbol{a}_h,a_{h,4},a_{h,5};\boldsymbol{q}_h,q_{h,4},q_{h,5};q\big) ~ \prod_{v} \hat{V}^{\PT}_{\boldsymbol{\mu}_v}\big(\boldsymbol{q}_v;\,q\,(q_{v,4}/q_{v,5})^{1/2}\big) \,.
    \end{split}
    \end{equation*}
\end{prop}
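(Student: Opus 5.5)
Starting from \Cref{thm: PT vertex formula no ins}, the plan is to compute how the Nekrasov--Okounkov insertion
\[
\det\big(\mathbf{R}\pi_*\bF\otimes\rho^*(\cL_4-\cL_5)\big)^{1/2}
\]
restricts to each $\gpT'$-fixed stable pair, together with the sign and $q$-shift in \eqref{eq: defn PT series}. By virtual localisation, $\chi_{\gpT'}(P_n(X,\beta),\widetilde{\cO}^{\vir})$ is a sum over fixed points. At a fixed point, the contribution from $\cO^{\vir}\otimes K_{\vir}^{1/2}$ is already packaged into the products of $\hat{E}$ and $\hat{V}^{\PT}$ of \Cref{thm: PT vertex formula no ins}. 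So it suffices to show that, at a fixed pair $[\cO_X\to F]$ labelled by $\boldsymbol{\mu}$, the character of the square-root determinant times the extra sign $(-1)^{(\cL_4,\beta)}$ and the shift $q^{K_X\cdot\beta/2}$ factors as a product over vertices and edges. The vertex factors should combine with the vertex $q$-weight into the substitution $q\mapsto q(q_{v,4}/q_{v,5})^{1/2}$, and the edge factors should be exactly the extra monomials in \eqref{eq: new PT edge}.

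\emph{Step 1: the \v{C}ech decomposition.} Following \cite[Sec.~2, Thm.~2]{PT09:vertex}, write the $\gpT'$-character of $H^*(X,F\otimes\cL_i)=\mathbf{R}\Gamma(F\otimes\cL_i)$ as
\[
\sum_v \big(\text{vertex module } F_v\big)\cdot q_{v,i}\;-\;\sum_e \big(\text{edge module } F_e\big)\otimes\cL_i|_{\text{chart}}.
\]
The vertex term is the character of the local module of $F$ in the chart at $v$, where $\cL_i$ is trivialised with weight $q_{v,i}$. The edge term is the character of $F$ along the cylinder $\Gm\times\Aaff{2}$, computed in the \v{C}ech cover.

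The determinant of a $\gpT'$-character $\sum_w m_w\, w$ is $\prod_w w^{m_w}$. Since each vertex module is infinite-dimensional, the vertex and edge pieces must first be redistributed as in \cite{MNOPI,PT09:vertex}. Concretely, subtract the leg contributions $\sum_{(i,j)\in\mu_h} q_h^{\bullet}\cdots/(1-q_h)$ so that both pieces become Laurent polynomials. The total is unchanged because the subtractions telescope.

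\emph{Step 2: the vertex piece.} After redistribution, the vertex piece is a finite character $W_v$ multiplied by $q_{v,4}-q_{v,5}$. Its determinant square root is therefore $(q_{v,4}/q_{v,5})^{\mathrm{rk}\,W_v/2}$ times $\det(W_v)^{1/2}\det(W_v)^{-1/2}$, and the last two factors cancel. Here $\mathrm{rk}\,W_v$ is the renormalised box count, which is exactly the exponent of $q$ in $\hat{V}^{\PT}$ (the vertex contribution to $n$). This gives the substitution in the vertex argument.

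\emph{Step 3: the edge piece.} Along an edge $C_e$ with $\cL_i|_{C_e}\cong\cO(a_{h,i})$, the redistributed edge module is $\mathbf{R}\Gamma\big(C_e,\cO_{\mu_h}\otimes\cO(a_{h,i})\big)$. Here $\cO_{\mu_h}$ is the thickened curve given by the partition, with box $(i,j)$ twisted by $\cO(-a_2 i-a_3 j)$ and weight $q_1^{i}q_2^{j}$ in the fibre. One computes $\det\mathbf{R}\Gamma(\bP^1,\cO(m))$ with weights $q_h$ by an explicit formula, giving an exponent quadratic in $m$. Summing over boxes $(i,j)\in\mu_h$ with $m=a_{h,i}-a_2 i-a_3 j$ produces $|\mu|$, $n_\mu$ and $n_{\mu^{\mr t}}$. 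Taking the difference for $i=4,5$ and halving should yield the exponents $k_1,\dots,k_5$.

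The global sign $(-1)^{(\cL_4,\beta)}=\prod_e(-1)^{a_{h,4}|\mu_h|}$ and the shift $q^{K_X\cdot\beta/2}$ distribute edgewise. Using $\cL_4\otimes\cL_5\cong K_X$ (the Calabi--Yau condition on $\CYfi$), the shift becomes $q^{(a_4+a_5)|\mu|/2}$ after accounting for the Euler characteristic of the edge piece, which accounts for $k_0$. I expect this edge computation to be the main obstacle. It involves careful bookkeeping of the redistribution convention, of the box at $(0,0)$, of the cyclic orientation choices ensuring $\mu_h=\mu_{h'}^{\mr t}$, and of the sign choice of the square root. I would first verify the result for $X=\Tot_{\bP^1}\cO(a_2)\oplus\cO(a_3)$ with single-box partitions, comparing against a direct computation, before handling the general case.
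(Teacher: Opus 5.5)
Your proposal is correct and follows essentially the paper's proof. Both decompose $\mathbf{R}\pi_*\big(\bF\otimes\rho^*(\cL_4-\cL_5)\big)$ via the \v{C}ech complex into vertex and edge pieces, absorb the rank-zero vertex determinant $(q_{v,4}/q_{v,5})^{\mathrm{rk}}$ into the shift $q\mapsto q\,(q_{v,4}/q_{v,5})^{1/2}$, compute the edge determinant box by box to get $\prod_i q_{h,i}^{k_i(\mu)}$, and distribute the sign and $q^{K_X\cdot\beta/2}$ over edges.

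The only imprecision is that $\gpT'$-fixed loci of $P_n(X,\beta)$ are generally not isolated: they are products $\prod_v \mathsf{Q}_{\,v}$ of labelled box configuration spaces, so $W_v$ is a K-theory class on $\mathsf{Q}_{\,v}$ rather than a character. Your cancellation $\det(W_v)^{1/2}\det(W_v)^{-1/2}=1$ still works verbatim there, so nothing changes.
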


\begin{proof}
    In \cite[Sec.~3.1]{PT09:vertex} it was shown that a connected component $\mathsf{Q}$ of the $\gpT'$-fixed locus of $P_n(X,\beta)$ is a product $\mathsf{Q} = \prod_{v\in G} \mathsf{Q}_{\,v}$ where each $\mathsf{Q}_{\,v}$ parametrises a so-called labelled box configuration at a fixed point associated with $v$. Each such box configuration has a fixed asymptotic profile recorded by a triple of partitions $\boldsymbol{\mu}_v$. Recall that each term in \eqref{eq: PT vertex formula no ins} corresponds to the contribution of all fixed components with prescribed asymptotic profile recorded by a collection of partitions $\boldsymbol{\mu}$.

    Now let $\alpha\in K_{\gpT'}(X)$. Via a \v{C}ech complex calculation similar to the evaluation of the virtual normal bundle in \cite[Thm.~3]{PT09:vertex}, one can show that the restriction of $\mathbf{R}\pi_* ( \bF \otimes \rho^*\alpha )$ to a fixed component $Q$ decomposes as a K-theory class into
    \begin{equation*}\mathbf{R}\pi_* ( \bF \otimes \rho^*\alpha ) = \sum_{v\in V(G)} \alpha|_v \, A_v+\sum_{e=(h,h')\in E(G)}  \frac{\alpha|_h \, B_{\mu_{h}}(q_{h,2},q_{h,3})}{1-q_{h}^{-1}} + \frac{\alpha|_{h'} \, B_{\mu_{h'}}(q_{h',2},q_{h',3})}{1-q_{h'}^{-1}} 
    \end{equation*}
    where $A_v \in K_{\gpT'}(\mathsf{Q}_{\,v})$, we write $B_{\mu}(q_1,q_2) \coloneqq \sum_{(i,j)\in\mu} q_1^{-i} q_2^{-j}$ and we denote by $\alpha|_{v}$ and $\alpha|_{h}$ the virtual $\gpT'$-representation obtained by restricting $\alpha$ to the fixed point $v$ respectively to the fixed point adjacent to the half-edge $h$. Specialising this formula to the rank-zero class $\alpha = \cL_4 - \cL_5$ and taking the determinant thus yields
    \begin{equation}
    \label{eq: NO insertion distributed}
    \begin{split}
        &\qquad \det\left(\mathbf{R}\pi_* \big( \bF \otimes \rho^*\alpha \big)\right) = \\
        & \prod_{v\in V(G)}\left(\frac{q_{v,4}}{q_{v,5}}\right)^{\chi (A_v)} \!\! \prod_{e\in E(G)}  \!\!\det \left( \frac{ (q_{h,4} - q_{h,5}) \, B_{\mu_{h}}(q_{h,2},q_{h,3})}{1-q_{h}^{-1}} + \frac{(q_{h',4} - q_{h',5}) \, B_{\mu_{h'}}(q_{h',2},q_{h',3})}{1-q_{h'}^{-1}}  \right)\,.
    \end{split}
    \end{equation}
    By a direct calculation one can show that the edge factors simplify to
    \begin{equation*}
        \det \left( \frac{\alpha|_v \, B_{\mu_{v,e}}(q_{h_2},q_{h_ 3})}{1-q_{v,e}^{-1}} + \frac{\alpha|_{v'} \, B_{\mu_{v,e}}(q_{h_3'},q_{h_2'})}{1-q_{v,e}} \right) = \prod_{i=1}^5 q_{h,i}^{k_i(\mu)}
    \end{equation*}
    justifying the modification \eqref{eq: new PT edge}. We only remark that in order to prove the identity one has to use the relations
    \begin{equation*}
        q_{h'} = q_{h}^{-1}\,,\quad q_{h',2} = q_{h,3} q_h^{-a_{h,2}}\,,\quad q_{h',3} =  q_{h,2} q_h^{-a_{h,3}}\,,\quad q_{h',4} = q_{h,4}q_{h}^{-a_{h,4}}\,,\quad q_{h',5} = q_{h,5}q_{h}^{-a_{h,5}}\,.
    \end{equation*}

    Regarding vertex terms, note that the coefficient of $q^n$ in \smash{$\hat{V}^{\PT}_{\boldsymbol{\mu}_v}$} comes from factors $\mathsf{Q}_{\,v}$ of connected components $\mathsf{Q}=\bigsqcup_{v\in V(G)}\mathsf{Q}_{\,v}$ where $\chi (A_v) = n$. Hence, the first factor on the right-hand side of \eqref{eq: NO insertion distributed} can be absorbed by shifting \smash{$q\mapsto q\, (q_{v,4}/q_{v,5})^{1/2}$} in the vertex weight.
\end{proof}

\subsubsection{Numerical implementation}
\label{sec: PT numerics}
\Cref{prop: PT vertex formula with ins} together with Liu's implementation of the K-theoretic vertex and edge weight \cite{Liu:boxcounting} allows us to compute the \PTs\ of a toric threefold in a fivefold in low degree $\beta$ to the first orders in $q$. We illustrate this with our running example. For more details consult our documentation \cite{website}.

\begin{example}
\label{ex: running 3}
We continue our analysis of $Z= \Tot{}_{\bP^1} \, \cO(-2) \oplus \cO^{\oplus 3}$. First, we consider the \PTs\ with respect to the one-dimensional subtorus
\begin{equation*}
\begin{tikzcd}[row sep=0em]
    \Gm_q \times \gpT'_1 \ar[r] & \Gm[6]\\
    \big(q,(q_0,q_1,q_2,q_3)\big) \ar[r, maps to] & \big(q_0,q_1,q_2,q_3,q,(q_0 q_1 q_2 q_3q)^{-1}\big)\,.
\end{tikzcd}
\end{equation*}
With this choice we have
\begin{equation*}
    X_1 \coloneqq Z^{\Gm_q} = \Tot{}_{\bP^1} \, \cO(-2) \oplus \cO
    \,, \qquad \cL_4 = \cL_5 = \cO_{X_1}\,.
\end{equation*}

The coeffiecients of the \PTs\ will be rational functions in the variables $q_0,q_1,q_2,q_3$. After loading the package using
\begin{shaded}
\begin{MyVerb}
\textcolor{code}{sage>} load(PT_CY5.sage)
\end{MyVerb}
\end{shaded}
we set up the ring for our calculation as follows:
\begin{shaded}
\begin{MyVerb}
\textcolor{code}{sage>} setup_ring('q0,q1,q2,q3')
\end{MyVerb}
\end{shaded}
Now for example, if the unique edge of the GKM graph of $X_1$ is decorated with the partition $(1)$ its edge weight can be determined via
\begin{shaded}
\begin{MyVerb}
\textcolor{code}{sage>} E([1], [-2, 0, 0, 0], [q1/q0, q2*q0^2, q3, 1, 1/(q0*q1*q2*q3)])
((q0^2*q1^2*q2^2*q3^2 - q3^2)/(-q3^2 + 1))*q^2
\end{MyVerb}
\end{shaded}
Here, we use the same convention for the tangent weights as illustrated in \Cref{fig: A1 x C3 GKM graph} where we identify $q_i=\re^{u \epsilon_i}$. However, we should stress that in the output of function \texttt{E} all variables $q,q_0,\ldots,q_3$ are squared. Hence, the above output has to be understood as
\begin{equation*}
    \hat{E}^{\NO}_{(1)}\big(-2,0,0,0;q_1 q_0^{-1},q_2 q_0^2, q_3,1,(q_0 q_1 q_2 q_3)^{-1};q\big) = \frac{q_0 q_1 q_2 q_3 - q_3}{1 - q_3}\,q\,.
\end{equation*}
Similarly, the vertex weight at the fixed point $0\in\bP^1$ can be evaluated via
\begin{shaded}
\begin{MyVerb}
\textcolor{code}{sage>} V([[1], [], []], [q1/q0, q2*q0^2, q3, 1, 1/(q0*q1*q2*q3)], 3)
1 + ((q0^4*q1^2*q2^2*q3^2 - q1^2)/(q0^2 - q1^2))*q^2 + ((q0^10*q1^4*q2^4*q3^4 - q0^6*q1^4*q2^2*q3^2 - q0^4*q1^6*q2^2*q3^2 + q1^6)/(q0^6 - q0^4*q1^2 - q0^2*q1^4 + q1^6))*q^4 + O(q^6)
\end{MyVerb}
\end{shaded}
Here, we use the same convention for the $\gpT'$-weights as in \Cref{ex: A1 x C3 GKM graph}.

We may evaluate the coefficients of the \PTs\ in  curve class $\beta = d[\bP^1]$ by summing over all partitions of $d$. For instance for $d=1$ we find:
\begin{shaded}
\begin{MyVerb}
\textcolor{code}{sage>} (
\textcolor{code}{sage>}   V([[1], [], []], [q1/q0, q2*q0^2, q3, 1, 1/(q0*q1*q2*q3)], 3)
\textcolor{code}{sage>}   * E([1], [-2, 0, 0, 0], [q1/q0, q2*q0^2, q3, 1, 1/(q0*q1*q2*q3)])
\textcolor{code}{sage>}   * V([[1], [], []], [q0/q1, q2*q1^2, q3, 1, 1/(q0*q1*q2*q3)], 3)
\textcolor{code}{sage>} )
((q0^2*q1^2*q2^2*q3^2 - q3^2)/(-q3^2 + 1))*q^2 + ((q0^4*q1^4*q2^4*q3^4 - q0^2*q1^2*q2^2*q3^4 + q0^2*q1^2*q2^2*q3^2 - q3^2)/(-q3^2 + 1))*q^4 + ((-q0^6*q1^6*q2^6*q3^6 + q0^4*q1^4*q2^4*q3^6 - q0^4*q1^4*q2^4*q3^4 + q0^2*q1^2*q2^2*q3^4 - q0^2*q1^2*q2^2*q3^2 + q3^2)/(q3^2 - 1))*q^6 + O(q^8)
\end{MyVerb}
\end{shaded}
Consistent with \Cref{conj: PT GV} these are the first terms of a Laurent expansion of a rational function in $q$:
\begin{equation}
    \label{eq: formula PT d1 KP1xC3}
    \PTarg{[\bP^1]}{X_1}{Z}{\Gm_q}{\gpT_1'} =  -\frac{[(q_0 q_1 q_2)^{-1}]}{[q_3] \, [q] \, [(q_0 q_1 q_2 q_3q)^{-1}]} +O\big(q^4\big)\,.
\end{equation}
Here, we write $[q]\coloneqq q^{1/2} - q^{-1/2}$. We remark that in \cite[Cor.~5.1.19]{Ok15:lectures} Okounkov indeed proves that the above formula holds to any power in $q$.

Now note that under the change of variables
\begin{equation*}
    q_0=\re^{u \epsilon_0} \,, \qquad q_1=\re^{u \epsilon_1}  \,, \qquad q_2=\re^{u \epsilon_2} \,, \qquad q_3=\re^{u \epsilon_3} \,, \qquad q=\re^{u \epsilon_4}
\end{equation*}
formula \eqref{eq: formula PT d1 KP1xC3} perfectly agrees with our earlier calculation of Gromov--Witten invariants in \Cref{ex: A1 x C3 Omega}. This provides some first non-trivial evidence for \Cref{conj: GW PT}. Later in \Cref{prop: strip PT formula}, we will generalise formula \eqref{eq: formula PT d1 KP1xC3} from degree one to arbitrary degree.
\end{example}

\begin{example}
\label{ex: running 4}
Note that in the last example we could have as well made the choice
\begin{equation*}
\begin{tikzcd}[row sep=0em]
    \Gm_{q'} \times \gpT'_1 \ar[r] & \Gm[6]\\
    \big(q,(q_0,q_1,q_4,q_5)\big) \ar[r, maps to] & \big(q_0,q_1,q',(q_0 q_1 q_4 q_5q')^{-1},q_4,q_5\big)\,.
\end{tikzcd}
\end{equation*}
In this case the $\Gm_{q'}$-fixed locus of $\CYfi$ is
\begin{equation*}
    X_2 = \bP^1 \times \Aaff{2} 
    \qquad \text{and} \qquad \cL_4 = p^*\cO_{\bP^1}(-2)\,, \qquad \cL_5 = \cO_{X_2}
\end{equation*}
where $p$ is the projection $X_2 \rightarrow \bP^1$.
Note that \Cref{conj: PT sym} predicts an equality
\begin{equation}
    \label{eq: A1 x C3 PT sym}
    \PTarg{}{X_2}{Z}{\Gm_{q'}}{\gpT_2'} = \PTarg{}{X_1}{Z}{\Gm_q}{\gpT_1'}
\end{equation}
under a suitable change of variables.

Let us check this claim and evaluate the Pandharipande--Thomas invariants in curve class degree one:
\begin{shaded}
\begin{MyVerb}
\textcolor{code}{sage>} setup_ring('q0,q1,q4,q5')
\textcolor{code}{sage>} (
\textcolor{code}{sage>}   V([[1], [], []], [q1/q0, q4, q5, q0^2, 1/(q0*q1*q4*q5)], 3)
\textcolor{code}{sage>}   * E([1], [0, 0, -2, 0], [q1/q0, q4, q5, q0^2, 1/(q0*q1*q4*q5)])
\textcolor{code}{sage>}   * V([[1], [], []], [q0/q1, q4, q5, q1^2, 1/(q0*q1*q4*q5)], 3)
\textcolor{code}{sage>} )
((-q4^2*q5^2)/(q4^2*q5^2 - q4^2 - q5^2 + 1)) + ((q0^2*q1^2*q4^4*q5^4 - q0^2*q1^2*q4^2*q5^2)/(-q4^2*q5^2 + q4^2 + q5^2 - 1))*q^2 + ((q0^4*q1^4*q4^6*q5^6 - q0^4*q1^4*q4^4*q5^4)/(-q4^2*q5^2 + q4^2 + q5^2 - 1))*q^4 + O(q^6)
\end{MyVerb}
\end{shaded}
Indeed, these are the first terms of the Laurent expansion of the following rational function:
\begin{equation*}
    \PTarg{[\bP^1]}{X_1}{Z}{\Gm_{q'}}{\gpT_1'} = -\frac{[(q_0 q_1q')^{-1}]}{[(q_0 q_1 q_4 q_5q')^{-1}] \, [q_4] \, [q_5]} +O\big(q'^3\big)
\end{equation*}
Again, this matches our Gromov--Witten calculation in \Cref{ex: A1 x C3 Omega} consistently with \Cref{conj: GW PT} under the change of variables
\begin{equation*}
    q_0=\re^{u \epsilon_0} \,, \qquad q_1=\re^{u \epsilon_1}  \,, \qquad q_4=\re^{u \epsilon_4} \,, \qquad q_5=\re^{u \epsilon_5} \,, \qquad q'=\re^{u \epsilon_2}
\end{equation*}
which indeed confirms the symmetry \eqref{eq: A1 x C3 PT sym}. 
\end{example}

\section{Strip geometries}
\label{sec: strips}
The first class of geometries for which we want to investigate the web of correspondences between enumerative invariants (\Cref{fig: summary paper}) are so-called strip geometries.

\subsection{Geometric setup}
\label{sec: strips setup}
By a strip geometry we understand a toric Calabi--Yau threefold $\CYth$ whose fan is the cone over a triangulated rectangle placed at height one:
\begin{equation*}
\begin{tikzpicture}
    \draw (0,0) -- (2,0) -- (0,2) -- (0,0);
    \draw (2,0) -- (2,2) -- (0,2);
    \draw (2,0) -- (4,2) -- (2,2);
    \draw (2,0) -- (6,2) -- (4,2);
    \draw (2,0) -- (4,0) -- (6,2);
    \draw (4,0) -- (6,0) -- (6,2);
    \draw (6,0) -- (8,2) -- (6,2);
    \draw (6,0) -- (8,0) -- (8,2);
    \draw[dashed] (8,0) -- (8.7,0);
    \draw[dashed] (8,2) -- (8.7,2);
\end{tikzpicture}
\end{equation*}
We require that each triangle has a horizontal base of length one. We remark that all faces of the fan associated with segments at the top (respectively bottom) of the strip correspond to torus stable $\Aone$s acted on with the same torus weight which we denote by $\epsilon_2$ (respectively $\epsilon_3$). We promote such a threefold $\CYth$ to a fivefold by taking the product with the affine plane:
\begin{equation*}
    \CYfi = \CYth \times \Aaff{2}\,.
\end{equation*}

For strip geometries $\CYth$ the projection of all one-dimensional torus orbit closures under the moment map associated to the two-dimensional Calabi--Yau torus takes the following shape:
\begin{equation}
\label{eq: strip toric graph}
\begin{tikzpicture}[baseline={(current bounding box.center)}]
    \draw[gray] (0,0) -- (2,0) -- (0,2) -- (0,0);
    \draw[gray] (2,0) -- (2,2) -- (0,2);
    \draw[gray] (2,0) -- (4,2) -- (2,2);
    \draw[gray] (2,0) -- (6,2) -- (4,2);
    \draw[gray] (2,0) -- (4,0) -- (6,2);
    \draw[gray] (4,0) -- (6,0) -- (6,2);
    \draw[gray] (6,0) -- (8,2) -- (6,2);
    \draw[gray] (6,0) -- (8,0) -- (8,2);
    \draw[gray, dashed] (8,0) -- (8.7,0);
    \draw[gray, dashed] (8,2) -- (8.7,2);
    \foreach \pos in {{0.5,0.7},{1.5,1.7},{3.45,1.7},{3.95,1.2},{4.2,0.7},{4.6,0.3},{6.15,0.3},{6.3,0.15}}{
		\node[circle,inner sep=0.9pt,fill=black] at (\pos){};
	}
    \draw (-0.5,0.7) -- (0.5,0.7) -- (1.5,1.7) -- (3.45,1.7) -- (3.95,1.2) -- (4.2,0.7) -- (4.6,0.3) -- (6.15,0.3) -- (6.3,0.15) -- (8,0.15);
    \draw[dashed] (8,0.15) -- (8.5,0.15);
    \draw (0.5,0.7) -- (0.5,-0.5);
    \draw (1.5,1.7) -- (1.5,2.5);
    \draw (3.45,1.7) -- (3.45,2.5);
    \draw (3.95,1.2) -- (3.95,2.5);
    \draw (4.2,0.7) -- (4.2,-0.5);
    \draw (4.6,0.3) -- (4.6,-0.5);
    \draw (6.15,0.3) -- (6.15,2.5);
    \draw (6.3,0.15) -- (6.3,-0.5);
\end{tikzpicture}
\end{equation}
The normal bundle of every torus stable $\bP^1$ splits as either
\begin{equation*}
    \cO_{\bP^1}(-1) \oplus \cO_{\bP^1}(-1) \qquad \text{or} \qquad \cO_{\bP^1}(-2) \oplus \cO_{\bP^1} \,.
\end{equation*}
Our previous \Cref{ex: A1 x C3 GKM graph} falls into this class of examples with a single $\gpT'$-stable $\bP^1$ of the second type.

\subsection{The PT series}
For strip geometries the \PTs\ can be determined in closed-form via the capped two-leg vertex studied in \cite{KOO21:twoLegPT}. To state the formula, however, we first need to introduce more notation: We label the torus fixed points from left to right by $v_1,\ldots,v_{N+1}$ and denote by $\epsilon_{v_i}\in\{\epsilon_2,\epsilon_3\}$ the tangent weight of the vertical torus stable $\Aone$ adjacent to $v_i$, $i\in\{1,\ldots,N+1\}$. As always we denote the lift of this weight to K-theory by $q_{v_i} = \re^{u\epsilon_{v_i}}$. Finally, let us write $Q_i = Q^{[C_{i}]}$ for the variable that records the curve degree of the line $C_{i}$ connecting $v_{i}$ and $v_{i+1}$.

Before we can state a formula for the \PTs , recall that the definition of this series requires the choice of a \smash{$\Gm_q$}-action on $\CYfi = \CYth \times \Aaff{2}$ with three-dimensional fixed locus. We do the obvious choice where \smash{$\Gm_q$} acts on the two affine directions with opposite characters $q$ and $q^{-1}$ while fixing $\CYth$. Further, let $\gpT'=\Gm[3]$ be the dense open torus of $\CYth$. We extend the action of $\gpT'$ to the fivefold so that the tangent space of the affine plane decomposes as a $\Gm_q\times\gpT'$-representation into
\begin{equation*}
    \kappa^{1/2}q  \qquad \text{and} \qquad \kappa^{1/2}q^{-1} \,.
\end{equation*}
Here, $\kappa$ denotes the character by which $\gpT'$ scales the holomorphic three-form of $\CYth$. This choice makes $(\CYfi, \Gm_q\times\gpT')$ equivariantly Calabi--Yau.

\begin{prop}
    \label{prop: strip PT formula}
    We have
    \begin{equation}
	\label{eq: strip PT formula}
		\PTarg{}{\CYth}{\CYfi}{\Gm_q}{\gpT'} = \Exp \left(-\sum_{1 \leq m < n \leq N+1} \frac{[\kappa q_{v_m}]}{[q_{v_n}] \, [\kappa^{1/2}q] \, [\kappa^{1/2}q^{-1}]} \prod_{i=m}^{n-1} Q_i \right)
	\end{equation}
    where we use the notation $[q]\coloneqq q^{1/2} - q^{-1/2}$.
\end{prop}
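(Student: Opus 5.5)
We will prove \Cref{prop: strip PT formula} by cutting the strip $\CYth$ into capped two-legged pieces and gluing them with the formula of Kononov--Okounkov--Osinenko \cite{KOO21:twoLegPT}.

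\textbf{Step 1: rigidity and choice of subtorus.} The moduli spaces $P_n(\CYth,\beta)$ are not proper, because of the non-compact vertical legs. However, every $\gpT'$-fixed stable pair is supported on the chain of compact curves $C_1\cup\dots\cup C_N$, since a non-zero partition on a vertical leg would give non-compact support. Hence in \Cref{thm: PT vertex formula no ins} only the horizontal half-edges carry non-trivial partitions, and each vertex $v_i$ has at most two non-empty legs.

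Because $\cL_4=\cL_5=\cO_\CYth$, the insertion $\widetilde{\cO}^{\vir}$ reduces to $\cO^{\vir}\otimes K_{\vir}^{1/2}$, with $\Gm_q$ acting only through the weights $\kappa^{1/2}q^{\pm1}$. This is precisely the setting of \cite{KOO21:twoLegPT}, where $\Aaff{2}$ is replaced by the fibre directions acted on by a torus $\mathsf{A}$ preserving a symplectic form.

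\textbf{Step 2: degeneration to capped pieces.} I would use the relative/capped formalism, which is the standard route in \cite{KOO21:twoLegPT}. One degenerates $\CYth$ along the fibres over interior points of each $C_i$ into a chain of relative pieces. Each piece is a neighbourhood of $v_i$ with two relative divisors, of the form $\Tot(\cO\oplus\cO(-1))$ or $\Tot(\cO(-1)\oplus\cO(-1))$ relative to two fibres, or an $A_1$-type tube for a $\cO(-2)\oplus\cO$ curve. The PT series is then the product of capped two-leg vertices contracted with the gluing form over the relative conditions $\mathrm{Hilb}(\Aaff{2})$. By the main theorem of \cite{KOO21:twoLegPT}, each capped two-leg vertex is given by an explicit operator built from vertex operators $\Gamma_\pm$ on Fock space, namely the K-theoretic (Macdonald-type) analogue of the topological vertex with two legs. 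The tube and cap factors are diagonal in the Macdonald basis, and the framing factors depend on the splitting type of $N_{C_i}$.

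\textbf{Step 3: operator computation.} The glued series becomes a vacuum expectation of the form
\begin{equation*}
\langle \prod_i \Gamma^{\pm}_{(i)}(Q_1\cdots Q_{i-1}) \rangle .
\end{equation*}
The sign $\pm$ at $v_i$ is determined by whether the vertical leg at $v_i$ has weight $\epsilon_2$ or $\epsilon_3$, that is, whether it points up or down. Commuting all annihilation-type operators to the right produces one factor for each pair $m<n$. This factor is a plethystic exponential of a single term carrying the monomial $\prod_{i=m}^{n-1}Q_i$ and a rational function of $q_{v_m},q_{v_n},\kappa,q$. The normal ordering commutator of two vertex operators of this type is $\Exp$ of $\frac{(\dots)}{[\kappa^{1/2}q][\kappa^{1/2}q^{-1}]}$, which gives the denominator. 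The remaining weight is read off from the leg weights and yields $[\kappa q_{v_m}]/[q_{v_n}]$. As a sanity check, the cases $q_{v_m}=q_{v_n}$ (an $\cO(-2)\oplus\cO$ pair) and $q_{v_m}\neq q_{v_n}$ (the conifold type) should reproduce \eqref{eq: formula PT d1 KP1xC3} and the known resolved-conifold answer, respectively.

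\textbf{Main obstacle.} The hard part is bookkeeping: matching the normalisations of \cite{KOO21:twoLegPT} with ours. This includes the sign $(-1)^{n}$ and the shift $q^{K_X\cdot\beta/2}$ in \eqref{eq: defn PT series}, the square-root choices in $K_{\vir}^{1/2}$, the identification of their equivariant parameters $(t_1,t_2)$ and $\hbar$ with $\kappa^{1/2}q^{\pm1}$ and $\kappa$, and the framing factors on each edge. I would fix these normalisations by first checking against the single-edge results already available, namely \Cref{ex: running 3} and Okounkov's \cite[Cor.~5.1.19]{Ok15:lectures}, together with the conifold. I would then do an induction on $N$, appending one vertex at a time, to confirm that only the new pairwise factors $(m,N+1)$ appear.
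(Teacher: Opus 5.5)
Your architecture matches the paper's: cut the strip into the capped two-leg vertices of \cite{KOO21:twoLegPT}, glue via the degeneration formula, and induct on $N$. But the decisive Step 3 is asserted rather than derived. ``The remaining weight is read off from the leg weights'' is exactly the content of the proposition. Several of the inputs you would feed into that step are also not the right ones, so the plan as written is not yet a proof. Three things are missing.

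(1) Whether you cut once or at every $C_i$, what gets glued are relative series. So the statement you induct on must be the relative one, \Cref{prop: relative strip PT formula}, for the partial compactification $\ol{\CYth}$. Here the two horizontal ends carry relative divisors $D_1,D_2\cong\Aaff{2}$ with tangent weights $\epsilon_2,\epsilon_3$, and you keep the terms linear in $\cev{p}_1,\vec{p}_1$ and the bilinear term. The absolute formula is recovered only at the end, by setting $\cev{p}_1=\vec{p}_1=0$.

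(2) \cite[Thm.~1]{KOO21:twoLegPT} is not a Macdonald-type two-leg vertex built from $\Gamma_\pm$. It gives the capped vertex in closed form involving only $p_1$, namely $\Exp\big(\frac{q_4[q_4]}{[q_2][q_3][q_5]}\cev{p}_1\vec{p}_1-\frac{q_4^{1/2}}{[q_{v_1}][q_5]}(\cev{p}_1+\vec{p}_1)\big)$. If you cut so that every relative divisor carries the weights $\epsilon_2,\epsilon_3$, every piece is this same object. The splitting type of $N_{C_i}$ then enters only through $q_{v_i}\in\{q_2,q_3\}$, and there are no framing factors or $A_1$ tubes. The pieces you list are not what cutting over interior points of each $C_i$ produces; each piece contains a single fixed point with two capped legs.

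(3) In K-theory the degeneration formula does not glue with the plain Hall pairing. It inserts the inverse gluing operator, whose closed form $\mathsf{G}^{-1}=\Exp\big(\frac{[q_2][q_3][q_5]}{q_4[q_4]}\cev{p}_1\vec{p}_1Q_1\big)$ is a nontrivial result proven in \cite{OS22:QuDifEqNakVars}. You only mention a ``gluing form''.

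With these inputs the computation is short, and there are no normalisations to fit. Composing the capped vertex at $v_1$ with $\mathsf{G}^{-1}$ gives bilinear coefficient exactly $Q_1$ and $\vec{p}_1$-coefficient $-\frac{[q_2][q_3]}{q_4^{1/2}[q_{v_1}][q_4]}Q_1$. Contracting with the relative formula for the remaining chain $v_2,\ldots,v_{N+1}$ uses only $\langle\Exp(x\,p_1),\Exp(y\,p_1)\rangle=\Exp(xy)$. So no geometric series appear, and the linear and bilinear terms propagate as claimed. The new pair terms are products of that coefficient with $-\frac{q_4^{1/2}}{[q_{v_n}][q_5]}\prod_{i=2}^{n-1}Q_i$, giving $\frac{[q_2][q_3]}{[q_{v_1}][q_{v_n}][q_4][q_5]}\prod_{i=1}^{n-1}Q_i$. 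This equals $-\frac{[\kappa q_{v_1}]}{[q_{v_n}][q_4][q_5]}\prod_{i=1}^{n-1}Q_i$ because $q_{v_1}\in\{q_2,q_3\}$ and the log Calabi--Yau condition at the relative divisors forces $\kappa=q_4q_5=(q_2q_3)^{-1}$. That identity is what your ``read off from the leg weights'' needs. Calibrating constants against \Cref{ex: running 3} and the conifold cannot replace it, because doing so presupposes the structural form of the answer you are trying to prove.
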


We defer the proof of \Cref{prop: strip PT formula} to \Cref{sec: strip PT formula proof}.

\begin{rmk}
    Note that in \Cref{ex: running 3} we numerically tested the above formula when $N=1$, that is for a single torus stable $\bP^1$ with $q_{v_1} = q_{v_2}=q_2$.
\end{rmk}

\begin{rmk}
    \label{rmk: sym PT strip}
    Note that formula \eqref{eq: strip PT formula} exhibits all discrete symmetries of the ambient fivefold $\CYfi$. This is clear evidence for \Cref{conj: PT sym}.
\end{rmk}


\subsection{The GW series}
Before proving \Cref{prop: strip PT formula} let us explore its consequences. Note that together with the \GWPT\ (\Cref{conj: GW PT}) the \namecref{prop: strip PT formula} predicts the following closed-form expression for the \GWs\ of $X\times\Aaff{2}$.

\begin{conj}
    \label{conj: strip GW formula}
    Suppose $\CYth$ is a strip geometry and $(X\times\Aaff{2},\gpT)$ is equivariantly Calabi--Yau. Then if we denote the tangent $\gpT$-weights of the affine plane by $\epsilon_4,\epsilon_5$ we have
    \begin{equation*}
    \label{eq: strip GW formula}
    \GWarg{X\times\Aaff{2}}{\gpT}{} = \Exp \left(-\sum_{1 \leq m < n \leq N+1} \frac{[\kappa q_{v_m}]}{[q_{v_n}] \, [q_4] \, [q_5]} \prod_{i=m}^{n-1} Q_i \right)
	\end{equation*}
    under the change of variables $q_i = \re^{u \epsilon_i}$.
\end{conj}

We can prove this formula for anti-diagonal torus actions.

\begin{prop}
    \label{prop: strip GW formula limit}
    \Cref{conj: strip GW formula} holds when $q_i q_j = 1$ for some $i,j\in\{2,3,4,5\}$.
\end{prop}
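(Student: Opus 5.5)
We distinguish the cases according to which pair of weights is anti-diagonal; up to the discrete symmetries of the strip we need two of them. Throughout, the Calabi--Yau condition reads $\epsilon_4+\epsilon_5 = -\kappa$ in additive notation, i.e.\ $q_4q_5=\kappa^{-1}$, with $\kappa = \re^{u(\epsilon_1+\epsilon_2+\epsilon_3)}$ the weight of the threefold volume form.

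\emph{Case $q_4q_5=1$.} Here $\kappa=1$ and we use the dimensional reduction \eqref{eq: GW 5 to 3 reduction strip}: the fivefold series equals the threefold series $\GWarg{X}{\gpT}{}$ with $u\mapsto \ri u\epsilon_4$, where $\gpT$ now acts Calabi--Yau on $X$. For the strip geometry the Calabi--Yau equivariant threefold GW series is known in closed form via the topological vertex, i.e.\ the Gopakumar--Vafa form of the two-leg strip. Only genus zero BPS states in classes $C_m+\dots+C_{n-1}$ appear, with multiplicity $\pm1$ according to the normal bundle types (the sign is $(-1)^{\#\{\text{ $(-2,0)$ curves}\}+1}$, which corresponds to whether $q_{v_m}=q_{v_n}$). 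Moreover, by rigidity of the Calabi--Yau threefold theory the threefold series is independent of the torus weights in this case. Substituting $u\mapsto \ri u\epsilon_4$ turns $2\sin(u/2)^{2}$ into $-[q_4][q_5]$. It then remains to check that at $\kappa=1$ the ratio $[q_{v_m}]/[q_{v_n}]$ equals $\pm1$ with the matching sign, which is immediate since $q_{v_i}\in\{q_2,q_3\}$ and $q_2q_3=1$ when $\kappa=1$ and $\epsilon_1$ is the weight along the strip. Strictly, $\epsilon_1$ should be absorbed: the rigid threefold answer does not see it.

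\emph{Case $q_iq_j=1$ with $i\in\{2,3\}$, $j\in\{4,5\}$, say $q_2q_4=1$.} Now the fivefold is viewed differently: $X\times\Aaff{2}$ with the antidiagonal pair $(\epsilon_2,\epsilon_4)$ is locally, near each torus-invariant curve, of the form $X'\times\Aaff{2}$, where $X'$ is a threefold built from the strip directions and the other affine weights. The localisation formula \eqref{eq: loc formula GW} only depends on the GKM graph together with normal splittings. At every vertex, the product $\HodgeLambda[g]{\epsilon_2}\HodgeLambda[g]{\epsilon_4}$ with $\epsilon_2=-\epsilon_4$ collapses via Mumford's relation $\Lambda(\epsilon)\Lambda(-\epsilon)=(-1)^g\epsilon^{2g-2}$, so the quintuple Hodge integrals reduce to triple ones exactly as in the locally self-dual setting of \cite{Sch26:CY5TV}. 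I would invoke that paper's result that for locally self-dual actions the fivefold GW series equals a substituted threefold (topological vertex) series, and then evaluate it for the strip using the known two-leg vertex/strip formula of Iqbal--Kashani-Poor. The vertical legs with weight $\epsilon_2$ are paired with the affine $\epsilon_4$ direction, and the result is reorganised into the stated plethystic form.

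The main obstacle is this second case. The top vertical legs carry weight $\epsilon_2$ but the bottom legs carry $\epsilon_3$, so the reduction is self-dual at top vertices but the bottom vertices still see $\epsilon_3$ paired with $\epsilon_5$. One must check that the self-dual hypothesis of \cite{Sch26:CY5TV} holds at every vertex; it does, because the CY condition with $q_2q_4=1$ forces $\epsilon_1+\epsilon_3+\epsilon_5=0$. Each vertex then carries an antidiagonal pair among its five weights, which suffices for the Mumford reduction. Resumming the resulting strip vertex expression into the product form, via the Cauchy identities for Schur functions exactly as in the refined strip computations, and matching $[\kappa q_{v_m}]/[q_{v_n}]$ is where the bookkeeping effort lies.
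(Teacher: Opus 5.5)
Your two-case structure is the paper's own. For $q_4q_5=1$ you use the reduction \eqref{eq: GW 5 to 3 reduction strip} followed by the two-leg vertex/strip computation. For a mixed pair you use the vertex-wise collapse $\Lambda_g(\epsilon)\Lambda_g(-\epsilon)\propto\epsilon^{2g-2}$ feeding into \cite{Sch26:CY5TV}; the paper simply quotes \cite[Eq.~(16)]{Sch26:CY5TV} for the resulting strip formula, so the Schur-function resummation you anticipate is not needed.

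\textbf{The gap.} The weight relation on which both cases hinge is wrong.
\begin{itemize}
\item You take $\kappa=\re^{u(\epsilon_1+\epsilon_2+\epsilon_3)}$ and deduce from $q_2q_4=1$ that $\epsilon_1+\epsilon_3+\epsilon_5=0$. That relation gives \emph{no} antidiagonal pair among the weights of a bottom vertex (two strip weights together with $\epsilon_3,\epsilon_4,\epsilon_5$). So the self-duality hypothesis is not actually verified there.
\item Likewise, in your first case, ``$\kappa=1$ forces $q_2q_3=1$'' contradicts your own formula for $\kappa$ unless $\epsilon_1=0$.
\end{itemize}
The missing input is a fact specific to strips: $\epsilon_2$ and $\epsilon_3$ are never tangent weights at a common vertex. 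All rays of the fan sit at height one, say at $(k,0,1)$ and $(l,1,1)$. Hence, up to the global sign convention, the bottom legs have character $(0,1,0)$, the top legs $(0,-1,1)$, and the tangent weights at \emph{every} fixed point sum to $(0,0,1)$, i.e.\ to $\epsilon_2+\epsilon_3$. Therefore $\kappa=q_4q_5=(q_2q_3)^{-1}$, and the fivefold Calabi--Yau condition is $q_2q_3q_4q_5=1$, with no further weight entering. This fixes both cases:
\begin{itemize}
\item $q_4q_5=1\Leftrightarrow q_2q_3=1$. This gives $[q_{v_m}]/[q_{v_n}]=\pm1$ in your first case. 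It also disposes of the pair $\{2,3\}$, which is not related to $\{4,5\}$ by any discrete symmetry of the strip, contrary to your opening remark.
\item $q_2q_4=1\Leftrightarrow q_3q_5=1$. So top vertices carry the antidiagonal pair $(\epsilon_2,\epsilon_4)$ and bottom vertices the pair $(\epsilon_3,\epsilon_5)$, which is exactly what is needed at every vertex.
\end{itemize}
This is the content of the paper's sentence ``by the Calabi--Yau condition on the torus weights it remains to check $i\in\{2,3\}$, $j\in\{4,5\}$''.

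\textbf{Further points.}
\begin{itemize}
\item The antidiagonal pair switches between the two ends of every $(-1,-1)$ curve. So near such a curve the fivefold is \emph{not} of the form $X'\times\Aaff{2}$ with antidiagonal weights on the plane, contrary to what you write. The reduction is genuinely vertex-by-vertex, and the edge bookkeeping is what \cite{Sch26:CY5TV} supplies.
\item Under $u\mapsto\ri u\epsilon_4$ one has $(2\sin\frac u2)^2\mapsto[q_4][q_5]$, not $-[q_4][q_5]$.
\item The genus-zero BPS number of $C_m+\dots+C_{n-1}$ is $-1$ exactly when $q_{v_m}=q_{v_n}$. It equals $(-1)^{\#(-1,-1)+1}$, not $(-1)^{\#(-2,0)+1}$. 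Your two sign slips cancel only for chains of odd length. For example, for $C_1+C_2$ in $\cA_2\times\Aone$ the correct coefficient is $-1/([q_4][q_5])$.
\item In the paper's normalisation $q_4q_5=\kappa$ rather than $\kappa^{-1}$, which matters when matching $[\kappa q_{v_m}]$ in the second case.
\item Rigidity in the sense of \cite[Prop.~7.5]{NO14:membranes} requires proper moduli spaces, which fails as soon as the strip contains a $(-2,0)$ curve. The weight-independence at $q_4q_5=1$ must come out of the equivariant two-leg vertex computation \cite{LLLZ09:TV} itself rather than be assumed.
\end{itemize}
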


\begin{proof}
    When $\{i,j\}=\{4,5\}$ the fivefold invariants reduce to the threefold invariants of $\CYth$ by relation \eqref{eq: GW 5 to 3 reduction strip}. Via the two-leg topological vertex \cite{LLLZ09:TV} and the methods developed in \cite[Sec.~2.2]{IKP06:strip} the resulting threefold generating series can be determined as
    \begin{equation*}
    	\exp \, \GWarg{X\times\Aaff{2}}{\gpT}{} = \Exp \left(\sum_{1 \leq m < n \leq N+1} \frac{(-1)^{\delta_{\epsilon_{v_m},\epsilon_{v_n}}}}{\big(2 \sin \tfrac{u}{2}\big)^2} \prod_{i=m}^{n-1} Q_i\right)\,.
    \end{equation*}
    This is in agreement with the formula stated in \Cref{conj: strip GW formula}. By the Calabi--Yau condition on the torus weights it remains to check the formula when $i\in\{2,3\}$ and $j\in \{4,5\}$. In this case the formula claimed has been established in \cite[Eq.~(16)]{Sch26:CY5TV} using that a dimensional reduction similar to \eqref{eq: GW 5 to 3 reduction strip} appears locally at every vertex contributing to the fixed point formula \eqref{eq: loc formula GW}.
\end{proof}

The key to prove \Cref{conj: strip GW formula} for anti-diagonal torus actions is that every quintuple Hodge integral occurring in the fixed point formula \eqref{eq: loc formula GW} reduces to a triple Hodge integral for which explicit formulae are known. Beyond anti-diagonal torus actions such a reduction does not occur. This means one has to embrace honest quintuple Hodge integrals which at the moment we cannot control sufficiently.

However, we can still provide numerical evidence for \Cref{conj: strip GW formula} beyond anti-diagonal torus actions, which includes the following two examples.

\begin{example}\label{ex: Ar x C}
Let $\cA_N$ be a minimal crepant resolution of the $A_N$-Du Val surface singularity. The exceptional locus of the resolution consists of a chain of $N$ rational curves and $X=\cA_N \times \Aone$ falls into the class of strip geometries. We display the GKM graph of this threefold in \Cref{fig: GKM graphs strip examples} (A).
As indicated we denote the tangent $\gpT$-weight at the origin of $\Aone$ by $\epsilon_3$.
To obtain the picture from \eqref{eq: strip toric graph}, we simply choose the triangulation of the rectangular strip so that all resulting vertical half-edges point down.

For this geometry, \Cref{conj: strip GW formula} predicts the following formula, which we can test using \texttt{GW\_CY5.jl}:
\begin{equation*}
    \exp \GWarg{\cA_N \times \Aaff{3}}{\gpT}{} = \Exp \left(-\frac{[q_3q_4q_5]}{[q_3][q_4][q_5]}\sum_{1\leq m \leq n\leq N} \prod_{k=m}^n Q_k \right)\,.
\end{equation*}
For $N=1$ this formula was first conjectured in \cite[Conj.~3.4]{BS24:refGW}.
\end{example}

\begin{figure}
    \centering
    \begin{subfigure}[t]{0.47\textwidth}
        \centering
        \begin{tikzpicture}[xscale=1.8]
        \foreach \pos in {{2,0},{3,1},{3.5,2},{3.83,3}}{
            \node[circle,inner sep=1.5pt,fill=black] at (\pos){};
        }
        \draw (1.2,0) -- (2,0) -- (3,1) -- (3.5,2) -- (3.61,2.33);
        \draw[dotted] (3.61,2.33) -- (3.72,2.66);
        \draw (3.72,2.66) -- (3.83,3) -- (3.955,3.5);
        \draw  (2,0) -- (2,-1);
        \draw  (3,1) -- (3,-1);
        \draw  (3.5,2) -- (3.5,-1);
        \draw  (3.83,3) -- (3.83,-1);
        \node[below] at (2.2,0) {\scriptsize $\epsilon_3$};
        \node[below] at (1.5,0.5) {\scriptsize $\epsilon_1$};
        \node[below] at (3.2,1) {\scriptsize $\epsilon_3$};
        \node[below] at (4.03,3) {\scriptsize $\epsilon_3$};
        \end{tikzpicture}
        \caption{$\cA_N \times \Aaff{3}$.}
        \label{fig: Ar x C}
    \end{subfigure}
    \hfill
    \begin{subfigure}[t]{0.47\textwidth}
        \centering
        \begin{tikzpicture}[scale=0.9]
        \foreach \pos in {{1,0},{2,1},{3.4,1},{4.4,2},{5.8,2}}{
            \node[circle,inner sep=1.5pt,fill=black] at (\pos){};
        }
        \draw (-0.4,0) -- (1,0) -- (2,1) -- (3.4,1) -- (3.7,1.3);
        \draw[dotted] (3.7,1.3) -- (4.1,1.7);
        \draw (4.1,1.7) -- (4.4,2) -- (5.8,2) -- (6.8,3);
        \draw  (1,0) -- (1,-1.4);
        \draw  (2,1) -- (2,2.4);
        \draw  (3.4,1) -- (3.4,-0.4);
        \draw  (4.4,2) -- (4.4,3.4);
        \draw  (5.8,2) -- (5.8,0.6);
        \node at (1.2, -0.8) {\scriptsize $\epsilon_3$};
        \node at (3.6, 0.4) {\scriptsize $\epsilon_3$};
        \node at (6, 1.4) {\scriptsize $\epsilon_3$};
        \node at (2.2, 1.5) {\scriptsize $\epsilon_2$};
        \node at (4.6, 2.6) {\scriptsize $\epsilon_2$};
        \end{tikzpicture}
        \caption{$X^{(-1,-1)}_N$.}
        \label{fig: chain of O(-1)s}
    \end{subfigure}
    \caption{GKM graphs of the toric geometries from \Cref{ex: Ar x C,ex: chain of O(-1)s}.}
    \label{fig: GKM graphs strip examples}
\end{figure}

\begin{example}\label{ex: chain of O(-1)s}
    Now let us consider a triangulation of the rectangular strip \eqref{eq: strip toric graph} so that the vertical half-edges alternate between pointing down and up as displayed in \Cref{fig: GKM graphs strip examples} (B).
    This is a chain of $\cO_{\bP^1}(-1) \oplus \cO_{\bP^1}(-1)$ of length $N$. We denote this threefold by \smash{$X_{\mathrlap{\raisebox{-1.2pt}{\tiny$N$}}}{\mathstrut}^{(-1,-1)}$}.
    For this threefold, \Cref{conj: strip GW formula} specialises to the following formula:
    \begin{equation*}
    \begin{split}
        &\exp \GWarg{X^{(-1,-1)}_N \times \Aaff{2}}{\gpT}{} = \Exp \Bigg(\frac{1}{[q_4][q_5]}\sum_{\substack{1\leq m\leq n\leq N \\ n-m\text{ even}}} \prod_{k=m}^n Q_k \Bigg)\\
        & \qquad \times \Exp \Bigg( - \frac{[q_2q_4q_5]}{[q_2][q_4][q_5]}\sum_{\substack{1\leq m\leq n\leq N \\ m\text{ odd}\\ n\text{ even} }} \prod_{k=m}^n Q_k - \frac{[q_3q_4q_5]}{[q_3][q_4][q_5]}\sum_{\substack{1\leq m\leq n\leq N \\ m\text{ even}\\ n\text{ odd}}} \prod_{k=m}^n Q_k \Bigg)\,.
    \end{split}
    \end{equation*}
    For $N=1$ this formula was first conjectured in \cite[Rmk.~3.2]{BS24:refGW}. We remark that consistent with \Cref{conj: rigidity},
    the formula predicts that the \GWs\ for fixed curve class $k[C_m]+\ldots+k[C_n]$ solely depends on the equivariant weights $\epsilon_4$ and $\epsilon_5$ when $m-n$ is even.
\end{example}

We are now ready to present evidence for \Cref{eq: strip GW formula} beyond \Cref{prop: strip GW formula limit}.

\begin{evidence}\label{ev: strip GW}
    Let $N=3$ and let $\beta_1,\beta_2,\beta_3$ be the curve classes of the torus stable $\bP^1$s from left to right in picture \eqref{eq: strip toric graph}, writing $(d_1,d_2,d_3):=d_1\beta_1+d_2\beta_2+d_3\beta_3$.
    We verified Conjecture~\ref{conj: strip GW formula} in the examples listed in \Cref{tab: conj strip GW evidence} up to genus $3$, for all curve classes of which some positive integer multiple appears in the given list.
    Here, the \emph{mixed example} refers to the threefold $\CYth$ obtained from picture \eqref{eq: strip toric graph} with $N=3$ torus stable $\bP^1$s where the vertical flags point up, up, down, up (from left to right).
    The full Julia code to reproduce \Cref{tab: conj strip GW evidence} may be found in \cite{website}.
    \begin{table}[H]
        \centering
        \def\arraystretch{1.5}
        \begin{tabular}{c|c}
        \textbf{Choice of $\boldsymbol{X}$} & \textbf{Verified curve classes} \\ \hline\hline
        \multirow{2}{*}{$\cA_3\times\bC$ (see \Cref{ex: Ar x C})} & $(6, 0, 0), (5, 0, 0), (4,0,0), (3,3,3), (4,2,4),$
        \\
        &
        $(3, 3, 0), (4, 2, 0), (2, 2, 2), (2, 1, 1), (1, 2, 1)$
    \\\hline
        \multirow{2}{*}{$X^{(-1,-1)}_3$ (see \Cref{ex: chain of O(-1)s})}
        &
        $(6,0,0), (5,0,0), (4,0,0), (3, 3, 0), (4, 2, 0),$
        \\
        &$(2, 2, 2), (2, 1, 1), (1, 2, 1), (3,3,3)$
        \\\hline
        \multirow{2}{*}{mixed example (see above)}
        &
        $(4,0,0), (0,4,0), (0,0,4), (2,2,0), (0, 2, 2),$
        \\
        &
        $(2, 2, 2), (3, 3, 3)$
        \end{tabular}
        \caption{Evidence for \Cref{conj: strip GW formula}}
        \label{tab: conj strip GW evidence}
    \end{table}
\end{evidence}

\begin{nonexample}
    Going beyond \Cref{conj: strip GW formula}, one might be tempted to speculate that for any chain of rational curves whose normal bundles are all of the form $\cO_{\bP^1}(-1) \oplus \cO_{\bP^1}(-1)$ or $\cO_{\bP^1}(-2) \oplus \cO_{\bP^1}$ we have
    \begin{equation}
        \label{eq: strip vanishing speculation}
        \Omega_{\beta} \neq 0 \qquad \text{only when} \qquad \beta = (0,\ldots,0,1,\ldots,1,0,\ldots,0)\,.
    \end{equation}
    Especially, we may allow the movement of the chain not be confined to a threefold $\CYth \subset \CYfi$. However, one already finds a counterexample to this speculation in the situation $\CYfi = \CYth \times \Aaff{2}$. More concretely, let $\CYth$ be the result of deleting toric strata from $K_{\bP^1 \times \bP^1}$ until there are only two torus stable $\bP^1$ meeting in a point. Then the normal bundle of both curves is $\cO_{\bP^1}(-2) \oplus \cO_{\bP^1}$ but numerically one finds that
    \begin{equation*}
        \Omega_{(2,2)} \neq 0\,.
    \end{equation*}
    In particular, this geometry violates \eqref{eq: strip vanishing speculation}. However experimentally, we do observed other examples beyond $\CYfi = \CYth \times \Aaff{2}$ where the vanishing \eqref{eq: strip vanishing speculation} appears to occur. This raises the following question which we leave for future research.
\end{nonexample}

\begin{question}
    Is there a weaker assumption on chains of rational curves in fivefolds than the one in \Cref{conj: strip GW formula} which ensures the vanishing \eqref{eq: strip vanishing speculation}?
\end{question}

\subsection{Interlude: The closed vertex}
Let us quickly discuss a geometry which is closely related but falls outside the class of strip geometries: the closed vertex. By this we mean the toric Calabi--Yau threefold $\CYth$ whose GKM graph under the embedding induced by the moment map associated with the two-dimensional Calabi--Yau torus of $\CYth$ is displayed in \Cref{fig: closed vertex}.
\begin{figure}
    \centering
    \begin{tikzpicture}[baseline={(current bounding box.center)}]
    \foreach \pos in {{1,0},{2,1},{3.4,1},{2,2.4}}{
		\node[circle,inner sep=1.5pt,fill=black] at (\pos){};
	}
    \draw (-0.4,0) -- (1,0) -- (2,1) -- (3.4,1) -- (4.4,2);
    \draw  (2,1) -- (2,2.4) -- (3,3.4);
    \draw  (1,0) -- (1,-1.4);
    \draw  (3.4,1) -- (3.4,-0.4);
    \draw  (2,2.4) -- (0.6,2.4);
    \node[right] at (2,1.7) {$Q_1$};
    \node[below] at (2.7,1) {$Q_2$};
    \node[above] at (1.2,0.4) {$Q_3$};
    \node[above] at (0.3,0) {\scriptsize$\epsilon_2$};
    \node[above] at (1.3,2.4) {\scriptsize$\epsilon_2$};
    \node[above] at (2.3,2.8) {\scriptsize$\epsilon_3$};
    \node[above] at (3.7,1.4) {\scriptsize$\epsilon_3$};
    \node[right] at (1,-0.7) {\scriptsize$\epsilon_1$};
    \node[right] at (3.4,0.3) {\scriptsize$\epsilon_1$};
\end{tikzpicture}
    \caption{The GKM graph of the closed vertex.}
    \label{fig: closed vertex}
\end{figure}
More precisely, $\CYth$ is obtained by compactifying the three coordinate lines of $\Aaff{3}$ to a $\bP^1$ in a way that the normal bundle of each line is $\cO_{\bP^1}(-1)\oplus \cO_{\bP^1}(-1)$.

We expect that the PT and \GWs\ of $\CYth$, respectively $\CYth \times \Aaff{2}$, admit a closed formula similar to the one we presented for strip geometries.

\begin{conj}\label{conj: closed vertex formula}
    Suppose $(X\times \Aaff{2},\Gm_q \times \gpT')$ is equivariantly Calabi--Yau with $\gpT'$-weights and curve class variables as indicated in \Cref{fig: closed vertex}. Then
    \begin{equation}
    \label{eq: closed vertex formula}
    \begin{split}
        & \qquad \PTarg{}{\CYth}{\CYth \times \Aaff{2}}{\Gm_q}{\gpT'} = \exp\,\GWarg{\CYth \times \Aaff{2}}{\gpT}{} = \\
        \Exp & \left( \quad \frac{1}{[q_4][q_5]}\left(Q_1+Q_2+Q_3\right) -\frac{[q_3q_4q_5]}{[q_3][q_4][q_5]} Q_1 Q_2 - \frac{[q_2q_4q_5]}{[q_2][q_4][q_5]} Q_1 Q_3  \right.  \\
          & \quad\left.-\frac{[q_1q_4q_5]}{[q_1][q_4][q_5]} Q_2 Q_3 + \frac{1}{[q_4][q_5]}Q_1Q_2Q_3 \right)
    \end{split}
    \end{equation}
\end{conj}

\begin{evidence}
    \label{ev: closed vertex formula PT}
    Numerically, we checked that the $Q_1^{d_1}Q_2^{d_2}Q_3^{d_3}$-coefficients of the \PTs\ indeed match the right-hand side of equation \eqref{eq: closed vertex formula} for $d_1+d_2+d_3 \leq 5$ up to $6$ 
    orders beyond the leading order in $q$. The calculation was done via the K-theoretic vertex as explained in \Cref{sec: PT numerics} and can be reproduced using the code documented in \cite{website}.
\end{evidence}

\begin{evidence}
    \label{ev: closed vertex formula GW}
    For the \GWs , we verified \Cref{conj: closed vertex formula} up to genus $3$ for all curve classes that have a positive integer multiple contained in the following list.
    Here, we write $(d_1,d_2,d_3)$ for the curve class associated to $Q^{\mathrlap{d_1}}{\mathstrut}_{1} \, Q^{\mathrlap{d_2}}{\mathstrut}_{2} \,Q^{\mathrlap{d_3}}{\mathstrut}_{3}$.
    \[
    (6,0,0), (5,0,0), (4,0,0), (3, 3, 0), (2, 2, 0), (2, 2, 2), (4, 2, 0), (1, 2, 1), (1,3,1)
    \]
    This list may be reproduced using the code documented in \cite{website}.
\end{evidence}

\subsection{Proof of \texorpdfstring{\Cref{prop: strip PT formula}}{Proposition 2.1}}
\label{sec: strip PT formula proof}
We will prove formula \eqref{eq: strip PT formula} for the \PTs\ of a strip geometry via a formula for the so-called \textit{capped} K-theoretic vertex proven by Kononov--Okounkov--Osinenko \cite{KOO21:twoLegPT}. More generally, we are actually going to prove a slight generalisation of this formula for relative PT invariants of a partial compactification of a strip geometry.

\subsubsection{Geometric setup}
Let us first specify the partial compactification. For this suppose $\CYth$ is a strip geometry as introduced in \Cref{sec: strips setup}. It constitutes of a chain of $N$ torus stable $\bP^1$s. Adjacent to the two torus fixed points at the ends of this chain are two torus stable $\Aone$s respectively: One has torus weight $\epsilon_2$ or $\epsilon_3$ and is represented by an upward or downward pointing ray in the GKM graph \eqref{eq: strip toric graph}. The other is represented by a ray with non-zero horizontal slope. We compactify a torus stable $\Aone$ of the latter type to a $\bP^1$ by adding an affine plane $\Aaff{2}$ at infinity in a way that the torus acts with tangent weights $\epsilon_2$, $\epsilon_3$ on this plane. We denote this partial compactification of $\CYth$ by \smash{$\ol{\CYth}$} and write \smash{$D_1 \sqcup D_2 = \ol{\CYth} \setminus \CYth$}. See \Cref{fig: strip compactified} for an illustration of a GKM graph of such a partial compactification. We remark that by our choice of torus weights on the divisors $D_i$, the normal bundles of the two compactified ends $C_0$ and $C_{N+1}$ must be isomorphic to $\cO_{\bP^1}(-1) \oplus \cO_{\bP^1}$. This means \smash{$(\ol{\CYth} \mid D_1+D_2)$} is logarithmically Calabi--Yau and so is the fivefold
\begin{equation*}
    (\ol{\CYfi} \mid \ol{D}_1+\ol{D}_2) = (\ol{\CYth} \times \Aaff{2} \mid D_1 \times \Aaff{2}+D_2 \times \Aaff{2})\,.
\end{equation*}

\begin{figure}
    \centering
    \begin{tikzpicture}[baseline={(current bounding box.center)}]
    \foreach \pos in {{0.5,0.7},{1.5,1.7},{3.45,1.7},{3.95,1.2},{4.2,0.7},{4.6,0.3},{6.15,0.3},{6.3,0.15}}{
		\node[circle,inner sep=1.3pt,fill=black] at (\pos){};
	}
    \draw (-0.8,0.7) -- (0.5,0.7) -- (1.5,1.7) -- (3.45,1.7) -- (3.95,1.2) -- (4.2,0.7) -- (4.6,0.3) -- (6.15,0.3) -- (6.3,0.15) -- (7.8,0.15);
    \draw (0.5,0.7) -- (0.5,-0.5);
    \draw (1.5,1.7) -- (1.5,2.5);
    \draw (3.45,1.7) -- (3.45,2.5);
    \draw (3.95,1.2) -- (3.95,2.5);
    \draw (4.2,0.7) -- (4.2,-0.5);
    \draw (4.6,0.3) -- (4.6,-0.5);
    \draw (6.15,0.3) -- (6.15,2.5);
    \draw (6.3,0.15) -- (6.3,-0.5);
    \draw[decorate, decoration={snake, segment length=0.7em, amplitude=0.1em}] (-0.8,2.5) -- (-0.8,-0.5) node[below] {$D_1$};
    \draw[decorate, decoration={snake, segment length=0.7em, amplitude=0.1em}] (7.8,2.5) -- (7.8,-0.5) node[below] {$D_2$};
    \node[above] at (-0.15,0.7) {$C_0$};
    \node[above] at (7.05,0.15) {$C_{N+1}$};
\end{tikzpicture}
    \caption{The GKM graph of the partial compactification $\ol{X}$. The squiggly edges represent the image of the relative divisors $D_1$ and $D_2$ under the moment map.}
    \label{fig: strip compactified}
\end{figure}

\subsubsection{Relative PT invariants}
The moduli space of relative stable pairs $P_n(\ol{\CYth} \mid D_1+D_2, \beta)$ comes equipped with a proper evaluation morphism
\begin{equation*}
    \mr{ev}: P_n(\ol{\CYth} \mid D_1+D_2, \beta) \longrightarrow \mr{Hilb} (D_1) \times \mr{Hilb} (D_2)
\end{equation*}
to the Hilbert scheme of points on $D_1 \sqcup D_2\eqqcolon D$. Following \cite[Sec.~2.1.6]{KOO21:twoLegPT} we twist the virtual structure sheaf of this moduli space
\begin{equation*}
     \widetilde{\cO}^{\vir}_{P_n(\ol{\CYth} \mid D, \beta)} \coloneqq  \cO^{\vir}_{P_n(\ol{\CYth} \mid D, \beta)} \otimes \left(K_{\vir}\otimes \det \mathbf{R}\pi_* \,\mathbb{F}|_D\right)^{1/2}
\end{equation*}
and consider its push-forward along the evaluation morphism:
\begin{equation}
    \label{eq: defn relative PT}
    \mr{ev}_* \, \widetilde{\cO}^{\vir}_{P_n(\ol{\CYth} \mid D, \beta)}\,.
\end{equation}
This push-forward takes values in the equivariant K-theory of the Hilbert scheme of points on the affine plane which can be identified with the ring of symmetric functions $\Lambda$ with values in $\Rep \gpT'$:
\begin{equation*}
    K_{\gpT'}\big(\mr{Hilb} (D_i)\big) \cong \Lambda \otimes \Rep \gpT'\,, \qquad i\in\{1,2\}\,.
\end{equation*}
Writing $\cev{p}_k$ and $\vec{p}_k$ for the $k$th power sum in the K-theory of the Hilbert scheme of $D_1$ respectively $D_2$, we can prove the following formula for the generating series of the invariants \eqref{eq: defn relative PT} which we form exactly as in \eqref{eq: defn PT series}:
\begin{equation*}
    \PTarg{}{\CYth}{\ol{\CYfi} \mid \ol{D}}{\Gm_q}{\gpT'} \coloneqq \!\!\!\! \sum_{\substack{\beta \neq 0\\ \beta = \sum_{i=0}^{n+1} d_i [C_i]}} \!\!\!\prod_{i=1}^n Q_i^{d_i} \sum_{n\in \bZ} (-q)^n ~ \mr{ev}_* \, \widetilde{\cO}^{\vir}_{P_n(\ol{\CYth} \mid D, \beta)}\,.
\end{equation*}

\begin{prop}
    \label{prop: relative strip PT formula}
    Writing \smash{$q_4 = \kappa^{1/2}q$} and \smash{$q_5 = \kappa^{1/2}q^{-1}$}, for $N\geq 0$ we have
    \begin{equation}
    \label{eq: relative strip PT formula}
    \begin{split}
        & \qquad \PTarg{}{\ol{\CYth}}{\ol{\CYfi} \mid \ol{D}}{\Gm_q}{\gpT'} \\
        =& \Exp \left(-\sum_{1 \leq m < n \leq N+1} \frac{[q_{v_m} q_4 q_5]}{[q_{v_n}] \, [q_4] \, [q_5]} \prod_{i=m}^{n-1} Q_i \right) \\[1em]
        \times & \Exp \left( \frac{q_4 [q_4]}{[q_2][q_3][q_5]} \,\cev{p}_1 \, \vec{p}_1 \prod_{i=1}^N Q_i - \sum_{m = 1}^{N+1} \frac{q_4^{1/2}}{[q_{v_m}][q_5]} \, \cev{p}_1 \prod_{i=1}^{m-1} Q_i - \sum_{m = 1}^{N+1} \frac{q_4^{1/2}}{[q_{v_{m}}][q_5]} \, \vec{p}_1 \prod_{i=m}^{N} Q_i \right)\,.
    \end{split}
    \end{equation}
\end{prop}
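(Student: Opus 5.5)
We argue by induction on $N$, using the degeneration formula for relative K-theoretic PT invariants together with the capped two-leg vertex of \cite{KOO21:twoLegPT}. The base case $N=0$ is exactly a capped two-leg vertex: $\ol{\CYth}$ is the total space over $C_0\cup C_1$, a single fixed point $v_1$ with two horizontal legs compactified by $D_1$ and $D_2$ and one vertical non-compact leg of weight $q_{v_1}$. \cite{KOO21:twoLegPT} express this capped vertex, with the twisted virtual structure sheaf $\widetilde{\cO}^{\vir}$ used there, as a plethystic exponential in the power sums $\cev{p}_k,\vec{p}_k$. We first match conventions: the sign $(-q)^n$, the polarisation and square roots, and the identification $q_4=\kappa^{1/2}q$, $q_5=\kappa^{1/2}q^{-1}$. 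Then we check that their formula reduces, for $N=0$, to the second $\Exp$ factor in \eqref{eq: relative strip PT formula}. In that case the first factor is empty, and only the $\cev{p}_1\vec{p}_1$ term and the two linear terms with $m=1$ survive. The essential input is that the KOO answer is linear in $p_1$ inside the $\Exp$, that is, only single-row/column $\cO_{\bP^1}(-1)\oplus\cO$-type contributions appear.

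For the inductive step we degenerate $\ol{\CYth}$ for $N$ along a fibre in the middle of $C_N$. Equivalently, we glue the geometry for the chain $C_1,\dots,C_{N-1}$ (with its right end compactified) to an $N=0$ geometry (with its left end compactified), along a common divisor $\Aaff{2}$ carrying weights $\epsilon_2,\epsilon_3$. The degeneration formula in K-theory gives
\[
\PT_{N} \;=\; \big\langle \PT_{N-1}\big|_{\vec p\to\, \text{glue}},\; \mathsf{G}^{-1}\; \PT_{0}\big|_{\cev p\to\,\text{glue}}\big\rangle\cdot(\text{tube}),
\]
where the pairing uses the inverse of the K-theoretic gluing form on $K_{\gpT'}(\mr{Hilb}\,\Aaff{2})\cong\Lambda$. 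With the $\widetilde{\cO}^{\vir}$ twist and the symmetrised polarisation, this pairing becomes the plethystically deformed Hall pairing
\[
\langle p_\lambda,p_\mu\rangle = \delta_{\lambda\mu}\, z_\lambda \prod_i \frac{(-1)[q_2^{\lambda_i}][q_3^{\lambda_i}]}{[(q_4)^{\lambda_i}]\cdots}
\]
(up to normalisation). The change of $\gpT'$-weights at each new vertex is recorded by the dependence on $q_{v_m}$, and the factor $Q_N$ by the degree of $C_N$.

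The key computation is then a Gaussian-type pairing of two plethystic exponentials that are at most bilinear in the $p_1$'s. Write $\PT_{N-1}=\Exp(A+B\,\vec p_1)$ and $\PT_0=\Exp(C\,\cev p_1+D\,\cev p_1\vec p_1+E\,\vec p_1)$. The standard identity
\[
\big\langle \Exp(b\,p_1),\Exp(c\,p_1)\big\rangle=\Exp\!\big(bc/\langle p_1,p_1\rangle^{-1}\big)
\]
(Cauchy identity in the deformed inner product) yields a new $\Exp$ of the sum of the old exponents plus the cross term $B\cdot(C+D\vec p_1)\cdot \frac{[q_2][q_3][q_5]}{q_4[q_4]}$-type factor. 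We then verify the resulting rational-function identities:
\begin{itemize}
\item the cross terms $\frac{q_4^{1/2}}{[q_{v_m}][q_5]}\cdot\frac{q_4^{1/2}}{[q_{v_{N+1}}][q_5]}\cdot(\text{pairing})$ produce exactly the new terms $-\frac{[q_{v_m}q_4q_5]}{[q_{v_{N+1}}][q_4][q_5]}\prod_{i=m}^{N}Q_i$;
\item the $B\cdot D$ cross term produces the new $\vec p_1$ terms;
\item the $\cev p_1\vec p_1$ coefficient propagates with $\prod Q_i$.
\end{itemize}
These identities should be elementary, using $q_2q_3q_4q_5=1$ up to the appropriate $\kappa$ conventions and the trigonometric identity $[ab][c]-\dots$. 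Finally, \Cref{prop: strip PT formula} follows by setting $\cev p=\vec p=0$ (capping with the empty partition), which kills all terms except the first $\Exp$.

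The main obstacle is the bookkeeping in the degeneration step. This includes the correct gluing form with the $\widetilde{\cO}^{\vir}$ twists and $\det\mathbf{R}\pi_*\bF|_D$ square roots, the signs $(-1)^{|\lambda|}$, and $q$-shifts from the Euler characteristic splitting. It also includes the exact normalisation of the KOO capped vertex, notably the $q_4^{1/2}$ prefactors. I would fix these by first checking the case $N=1$ against the explicit degree-one formula \eqref{eq: formula PT d1 KP1xC3} and the \texttt{boxcounting} numerics, and only then run the induction.
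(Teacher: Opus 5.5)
Your proposal is the paper's argument: induction on $N$ via the K-theoretic degeneration formula, with the Kononov--Okounkov--Osinenko capped two-leg vertex as base case and contraction of plethystic exponentials that are linear in $\cev{p}_1$ and $\vec{p}_1$. You split off the rightmost vertex where the paper splits off the leftmost, which is immaterial. The input you leave ``up to normalisation'' is imported in the paper from Okounkov--Smirnov as $\mathsf{G}^{-1}=\Exp\big(\frac{[q_2][q_3][q_5]}{q_4[q_4]}\,\cev{p}_1\vec{p}_1\,Q\big)$, contracted with the ordinary Hall product and with no extra tube factor. Its coefficient is exactly the inverse of the $\cev{p}_1\vec{p}_1$ coefficient of the capped vertex, and with it and $q_2q_3q_4q_5=1$ your bullet-point identities hold on the nose, so this formula should be cited rather than calibrated numerically.
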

Formula \eqref{eq: strip PT formula} for the \PTs\ of $\CYth$ then follows by specialising the above formula to curve classes $\beta$ which are supported away from the ends $C_0$ and $C_{n+1}$. This is achieved by setting $\cev{p}_1 = \vec{p}_1 = 0$ in \eqref{eq: relative strip PT formula}. To prove \Cref{prop: strip PT formula} it therefore suffices to show \Cref{prop: relative strip PT formula}.

We remark that for the resolved conifold $X=\Tot_{\bP^1} \cO(-1) \oplus \cO(-1)$ formula \eqref{eq: relative strip PT formula} has already been proven via degeneration in \cite[Thm.~2]{KOO21:twoLegPT}. For the proof of the general formula we will proceed in the same way.

\subsubsection{Proof via degeneration}
We will prove formula \eqref{eq: relative strip PT formula} by induction on the number $N$ of torus invariant $\bP^1$s via a degeneration argument.

First we claim that the base case $N=0$ is precisely the formula for the capped two-leg vertex proven by Kononov--Okounkov--Osinenko in \cite{KOO21:twoLegPT}. Indeed, for $N=0$ the GKM graph of \smash{$\ol{\CYth}$} is
\begin{equation*}
    \begin{tikzpicture}[baseline={(current bounding box.center)}]
    \node[circle,inner sep=1.3pt,fill=black] at (0,0){};
    \draw (-1,0) -- (0,0) -- (0.7,0.7);
    \draw (0,0) -- (0,-1) node[below] {$q_{v_1}$};
    \draw[decorate, decoration={snake, segment length=0.7em, amplitude=0.1em}] (-1,1.2) -- (-1,-1) node[below] {$D_1$};
    \draw[decorate, decoration={snake, segment length=0.7em, amplitude=0.1em}] (0.7,1.2) -- (0.7,-1) node[below] {$D_2$};
    \end{tikzpicture}
\end{equation*}
This is exactly the same geometric setup as considered in \cite[Sec.~2.1.1]{KOO21:twoLegPT}. The following formula for the \PTs\ of \smash{$(\ol{\CYth} \mid D_1 + D_2)$} is proven in \cite[Thm.~1]{KOO21:twoLegPT}:
\begin{equation}
	\label{eq: PT capped 2 leg vertex}
	\PTarg{}{\ol{\CYth}}{\ol{\CYfi} \mid \ol{D}}{\Gm_q}{\gpT'} = \Exp \left(\frac{q_4 [q_4]}{[q_2][q_3][q_5]}\, \cev{p}_1 \, \vec{p}_1 -\frac{q_4^{1/2}}{[q_{v_1}][q_5]} \, \cev{p}_1 -\frac{q_4^{1/2}}{[q_{v_1}][q_5]} \, \vec{p}_1\right)
\end{equation}
This is precisely the specialisation of formula \eqref{eq: relative strip PT formula} to $N=0$ which means the base case is proven.

\begin{figure}
    \centering
    \begin{tikzpicture}[baseline={(current bounding box.center)}]
    \foreach \pos in {{0.5,0.7},{1.5,1.7},{3.45,1.7},{3.95,1.2},{4.2,0.7},{4.6,0.3},{6.15,0.3},{6.3,0.15}}{
		\node[circle,inner sep=1.5pt,fill=black] at (\pos){};
	}
    \draw (-0.8,0.7) -- (0.5,0.7) -- (1.5,1.7) -- (3.45,1.7) -- (3.95,1.2) -- (4.2,0.7) -- (4.6,0.3) -- (6.15,0.3) -- (6.3,0.15) -- (7.8,0.15);
    \draw (0.5,0.7) -- (0.5,-0.5);
    \draw (1.5,1.7) -- (1.5,2.5);
    \draw (3.45,1.7) -- (3.45,2.5);
    \draw (3.95,1.2) -- (3.95,2.5);
    \draw (4.2,0.7) -- (4.2,-0.5);
    \draw (4.6,0.3) -- (4.6,-0.5);
    \draw (6.15,0.3) -- (6.15,2.5);
    \draw (6.3,0.15) -- (6.3,-0.5);
    \draw[decorate, decoration={snake, segment length=0.7em, amplitude=0.1em}] (-0.8,2.5) -- (-0.8,-0.5) node[below] {$D_1$};
    \draw[decorate, decoration={snake, segment length=0.7em, amplitude=0.1em}] (7.8,2.5) -- (7.8,-0.5) node[below] {$D_2$};
    \draw[decorate, decoration={snake, segment length=0.7em, amplitude=0.1em}] (1,2.5) -- (1,-0.5) node[below] {$D_3$};
    \draw [decorate,decoration={brace,amplitude=5pt,mirror}]
  (-0.8,-1.2) -- (0.9,-1.2) node[midway,yshift=-1.4em] {$\ol{X}_1$};
    \draw [decorate,decoration={brace,amplitude=5pt,mirror}]
  (1.1,-1.2) -- (7.8,-1.2) node[midway,yshift=-1.4em] {$\ol{X}_2$};
    \end{tikzpicture}
    \caption{The degeneration $\ol{X}_1 \cup_{D_3} \ol{X}_2$.}
    \label{fig: strip PT degen}
\end{figure}

For the induction step, suppose we start from a strip geometry $\CYth$ with $N>0$ torus stable $\bP^1$s. We degenerate the partial compactification \smash{$\ol{\CYth}$} to a union of a compactified vertex $\ol{X}_1$ and a chain of $N-1$ compactified torus stable $\bP^1$s $\ol{X}_2$ glued along a divisor $D_3 \cong \Aaff{2}$. This degeneration is illustrated in \Cref{fig: strip PT degen}. We now require the degeneration formula for PT invariants \cite{LW15:GoodDegQuotSch} which in the K-theoretic setting is discussed for instance in \cite[Sec.~6.5]{Ok15:lectures}. It states that the \PTs\ of $\CYth$ can be recovered from the ones of \smash{$\ol{X}_1$} and \smash{$\ol{X}_2$}:
\begin{equation*}
    \PTarg{}{\ol{\CYth}}{\ol{\CYfi} \mid \ol{D}}{\Gm_q}{\gpT'} = \PTarg{}{\ol{\CYth}_1}{\ol{\CYfi}_1 \mid \ol{D}_1 + \ol{D}_3}{\Gm_q}{\gpT'}  \cdot \mathsf{G}^{-1} \cdot  \PTarg{}{\ol{\CYth}_1}{\ol{\CYfi}_2 \mid \ol{D}_3 + \ol{D}_2}{\Gm_q}{\gpT'}\,.
\end{equation*}
Here, $\mathsf{G}^{-1}$ denotes the so-called gluing operator for which a closed formula was proven in \cite{OS22:QuDifEqNakVars}:
\begin{equation*}
    \mathsf{G}^{-1} = \Exp \left(\frac{[q_2] [q_3] [q_5]}{q_4 [q_4]} \, \cev{p}_1 \, \vec{p}_1 \, Q_1\right)
\end{equation*}
The product indicates that we must contract elements of $K_{\gpT'}\big(\mr{Hilb} (D_3)\big)^{\otimes2}$ with respect to the standard Hall product on symmetric functions. With formula \eqref{eq: PT capped 2 leg vertex} for the capped two-leg vertex and the formula for the gluing operator the first contraction gives
\begin{equation*}
    \PTarg{}{\ol{\CYth}_1}{\ol{\CYfi}_1 \mid \ol{D}_1 + \ol{D}_3}{\Gm_q}{\gpT'}  \cdot \mathsf{G}^{-1} = \Exp \left(\cev{p}_1 \, \vec{p}_1 \, Q_1  -\frac{q_4^{1/2}}{[q_{v_1}][q_5]} \, \cev{p}_1 - \frac{[q_2][q_3]}{q_4^{1/2} [q_{v_1}][q_4]}\,\vec{p}_1 \,  Q_1 \right)
\end{equation*}
Contracting this expression with the formula for a chain of $N-1$ torus stable $\bP^1$s \eqref{eq: relative strip PT formula} we indeed arrive at the desired formula. This closes our proof of \Cref{prop: relative strip PT formula}.

\section{Geometric engineering of pure gauge theories}
So far our discussion focused on the left side of the web of correspondences in \Cref{fig: summary paper}. In this section we will shift gears and relate the enumerative geometry of Calabi--Yau threefolds to that of the affine plane.

\subsection{Geometric setup}
\label{sec: X N m setup}
Fix integers $N>0$ and $m\in\bZ$. In this section we consider Calabi--Yau fivefolds which are the product of the affine plane with a toric Calabi--Yau threefold $X_{N,m}$ whose toric diagram we display in \Cref{fig: X N m toric diagram}. We assume that a torus $\gpT$ acts on the fivefold $Z_{N,m} = X_{N,m} \times \Aaff{2}$ in a way that the action on the fivefold is Calabi--Yau and the torus weights at the fixed points of the threefold are as indicated in \Cref{fig: X N m toric diagram}. As before, we denote the tangent weights at the origin of the affine plane by $\epsilon_4$ and $\epsilon_5$.
\begin{figure}
\centering
\begin{tikzpicture}
    \foreach \pos in {{0,0},{3.1,0},{3.4,1},{0.3,1},{0,0},{0.4,2},{4,2},{4.8,3},{0.4,3},{0.4,2},{-0.4,-1},{3,-1},{3.4,-2},{-1,-2},{6.3,4},{-0.3,4}}{
		\node[circle,inner sep=1.3pt,fill=black] at (\pos){};
	}
    \draw (0,0) -- (3.1,0) -- (3.4,1) -- (0.3,1) -- (0,0);
    \draw[dashed] (0.3,1) -- (0.4,2);
    \draw[dashed] (3.4,1) -- (4,2);
    \draw (0.4,2) -- (4,2) -- (4.8,3) -- (0.4,3) -- (0.4,2);
    \draw (4.8,3) -- (6.3,4) -- (-0.3,4) -- (0.4,3);
    \draw (6.3,4) -- (7.5,4.5);
    \draw (-0.3,4) -- (-1,4.5);
    \draw[dashed] (0,0) -- (-0.4,-1);
    \draw[dashed] (3.1,0) -- (3,-1);
    \draw (-0.4,-1) -- (3,-1) -- (3.4,-2) -- (-1,-2) -- (-0.4,-1);
    \draw (3.4,-2) -- (3.8,-2.5);
    \draw (-1,-2) -- (-1.6,-2.5);
    \node[below] at (-0.65,-2) {$\epsilon_3$};
    \node[right] at (6.6,4) {$\epsilon_1+(N+m)\epsilon_3$};
    \node[right] at (5.2,3.15) {$\epsilon_1+\epsilon_2+(N+m-2)\epsilon_3$};
    \node[right] at (3.1,0.2) {$\epsilon_1+(a-1)\epsilon_2+(N+m-2a+2)\epsilon_3$};
    \node[right] at (3.3,-1.8) {$\epsilon_1+(N-1)\epsilon_2+(-N+m+2)\epsilon_3$};
    \node[left] at (-0.85,-1.7) {$\epsilon_1+(N-1)\epsilon_2$};
    \node[left] at (0,0.25) {$\epsilon_1+(a-1)\epsilon_2$};
    \node[left] at (-0.4,4) {$\epsilon_1$};
    \node[left] at (0.25,3.15) {$\epsilon_1+\epsilon_2$};
    \node[above] at (1.8,4) {$C_1$};
    \node[above] at (1.8,0) {$C_a$};
    \node[above] at (1.8,-2) {$C_N$};
    \node[right] at (0.1,3.6) {$F_1$};
    \node[right] at (0.15,0.5) {$F_{a-1}$};
    \node[right] at (-0.7,-1.5) {$F_{N-1}$};
\end{tikzpicture}
\caption{The projection of the one-skeleton of the toric Calabi--Yau $X_{N,m}$ under the moment map associated to the action by its Calabi--Yau torus. We denote the horizontal torus preserved compact lines by $C_1,\ldots,C_N$ and left vertical ones by $F_1,\ldots,F_{N-1}$.}
\label{fig: X N m toric diagram}
\end{figure}

\subsection{Motivation}
\label{sec: gauge th motivation}
This class of toric threefolds is well studied in the physics literature because of their connection to supersymmetric gauge theory. It is expected that the partition function of A-twisted refined topological string theory on $X_{N,m}$ equates to Nekrasov's (K-theoretic) 5d instanton partition function of pure supersymmetric $\mathop{SU}(N)$ gauge theory on $\Aaff{2}$ with Chern--Simons level $m$ for the general $\Omega$ background \cite{Nek05:ChasingM,IKV09:refTopVert}. In mathematical terms, the latter function is the generating series of equivariant $\hat{A}$-genera of instanton moduli spaces. These generating series enjoy explicit formulae.

It was proposed in \cite{BS24:refGW} that the refined topological string partition function can mathematically be realised as the equivariant \GWs\ of $X_{N,m}\times \Aaff{2}$. This means we should find that this series --- and by \Cref{conj: GW PT} also the \PTs\ --- matches the explicit formula for the instanton partition function. The purpose of this section is to provide evidence for this claim partly building on earlier work of Arbesfeld who worked out the special case $N=2$, $m=0$ in the PT setting \cite{Arb21:KthDTHilbSurf}. However, to be able to state the conjectural correspondence we first need to introduce more notation.

\subsection{Nekrasov partition function of pure gauge theory}
For an integer partition $\mu=(\mu_0,\mu_1,\ldots)$ let us write
\begin{equation*}
    |\mu|=\sum_{i\geq 0} \mu_i \,,\qquad \sqabs{\mu}=\sum_{i\geq 0} \mu_i^2\,.
\end{equation*}
Moreover, we introduce the following notation:
\begin{gather*}
    f_{\mu}(q_4,q_5) \coloneqq(-1)^{|\mu|}\, q_4^{\sqabs{\mu^{\mr{t}}}/2} \, q_5^{\sqabs{\mu}/2}\,, \qquad \tilde{Z}_\mu(q_4,q_5) \coloneqq \prod_{(i,j)\in\mu} \Big(1-q_4^{\mu_j^{\mr{t}} - i} q_5^{-\mu_i + j+1}\Big)^{-1}\,, \\
    \cR_{\mu,\nu}(q_4,q_5;Q) \coloneqq  \prod_{i,j\geq 0} \frac{1}{1 - Q \,q_4^{i-\mu_j+1} q_5^{-j+\nu_i-1}}\,.
\end{gather*}
With this notation at hand, we define the following power series in the completed semigroup ring of $\Heff{X_{N,m}}$ with coefficients rational functions in $q_4$ and $q_5$:
\begin{equation*}
\begin{split}
    &\qquad \ZNek_{N,m}(q_4,q_5;Q)= \\
    &\sum_{\mu_1,\ldots,\mu_N} \prod_{a=1}^N \left[\left(-Q^{[C_a]}\right)^{|\mu_a|} \, f_{\mu_a}\big(q_4,q_5\big)^{2a-N-m-1}  q_4^{\sqabs{\mu_a^{\mr{t}}}/2} q_5^{-\sqabs{\mu_a}/2} \,\tilde{Z}_{\mu_a}\big(q_4,q_5\big) \, \tilde{Z}_{\mu_a^{\mr{t}}}\big(q_5^{-1},q_4^{-1}\big) \right] \\
    & \hspace{1em}\times \prod_{1\leq a<b\leq N} \cR_{\mu_a^{\mr{t}}, \mu_b } \Big(q_4,q_5 \, ; \, q_5 {\textstyle\prod_{c=a}^{b-1}} Q^{[F_c]}\Big) ~ \cR_{\mu_a^{\mr{t}}, \mu_b } \Big( q_4,q_5 \, ;\, q_4^{-1} {\textstyle\prod_{c=a}^{b-1}} Q^{[F_c]}\Big)\,.
\end{split}
\end{equation*}
In \cite{Tak08:RefVertInstanton}, the above series was matched with the 5d instanton partition function of pure $\mr{SU}(N)$ gauge theory with Chern--Simons level $m$ modulo perturbative corrections under a suitable identification of the curve class variables $Q^\beta$ with the energy scale and the flavour charges of the gauge theory. And indeed, confirming our motivation from \Cref{sec: gauge th motivation} one can show that the \PTs\ of $X_{N,m}$ matches the instanton partition function.

\begin{thm}
    \label{thm: gauge PT correspondence}
    For $|m|<N$ we have
    \begin{equation}
        \label{eq: PT gauge}
        \PTarg{}{X_{N,m}}{Z_{N,m}}{\Gm_q}{\gpT'} = \ZNek_{N,m}(\kappa^{1/2}q ,\kappa^{1/2}q^{-1};Q)\,.
    \end{equation}
    Especially, \Cref{conj: PT GV} holds for $X_{N,m}$.
\end{thm}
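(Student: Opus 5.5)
We plan to prove \Cref{thm: gauge PT correspondence} in three steps: reduce to a rigid limit, evaluate there with the refined topological vertex, and then match the result term by term with $\ZNek_{N,m}$. Since $|m|<N$, the toric diagram in \Cref{fig: X N m toric diagram} has no parallel external legs. Hence for every fixed curve class $\beta$ and Euler characteristic $n$ the moduli space $P_n(X_{N,m},\beta)$ is proper: every compactly supported curve class is a combination of the compact lines $C_a$ and $F_c$, and no fixed pair can escape along a noncompact direction. This properness is what we will verify first. With $\cL_4=\cL_5=\cO$, \cite[Prop.~7.5]{NO14:membranes} then shows that each coefficient $\chi_{\gpT'}(P_n,\cO^{\vir}\otimes K_{\vir}^{1/2})$ is a Laurent polynomial in $\kappa^{1/2}$ alone. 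We may therefore restrict $\gpT'$ to any one‑parameter subtorus with prescribed character $\kappa$, and in particular to the refined (Arbesfeld) limit. In that limit, as in \cite{Arb21:KthDTHilbSurf}, the vertex weights $\hat V^{\PT}$ of \Cref{thm: PT vertex formula no ins} collapse to refined topological vertices $C_{\lambda\mu\nu}(t,q)$ with $t,q$ identified with $\kappa^{1/2}q^{\pm1}$. A byproduct is that the series becomes a genuine (rational) function of $q$, which will give the claim on \Cref{conj: PT GV}.

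Concretely, we will choose the subtorus so that the preferred direction of the refined vertex is the horizontal one, i.e.\ along the lines $C_a$. Every vertex of $X_{N,m}$ then sits on exactly one horizontal edge, so the non‑preferred legs are the vertical edges $F_c$ and the external legs. We then write \eqref{eq: PT vertex formula no ins} as a sum over partitions $\mu_1,\ldots,\mu_N$ on the $C_a$ and $\nu_c$ on the $F_c$. At each vertex we invoke Arbesfeld's identification of $\hat V^{\PT}\hat E$ with a refined vertex times framing factors. The framing factor on $C_a$ comes from the normal bundle degrees $(a_2,a_3)$, which can be read off the weights in \Cref{fig: X N m toric diagram}. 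These degrees are $(-1-(2a-N-m-1),\,\cdot)$‑type, and they will produce exactly the factor $f_{\mu_a}^{2a-N-m-1}$ together with the extra $q_4^{\sqabs{\mu_a^{\mr t}}/2}q_5^{-\sqabs{\mu_a}/2}$.

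Next, the sums over the vertical partitions $\nu_c$ will be carried out strip by strip, using the Cauchy identities for skew Macdonald/Schur functions at the specialisations $t^{-\rho}q^{-\mu}$. Each of the two vertical strips (left and right) collapses to products of the form $\prod_{i,j}(1-Q\,q_4^{\cdots}q_5^{\cdots})^{-1}$, which yields the two $\cR_{\mu_a^{\mr t},\mu_b}$ factors. These carry $Q$ shifted by $q_5$ and $q_4^{-1}$ respectively, and they come from the mismatch of the $\kappa^{1/2}$ powers between the two strips. The remaining one‑vertex factors $t^{\sqabs{\mu^{\mr t}}/2}\tilde Z_\mu(t,q)$ assemble into $\tilde Z_{\mu_a}\tilde Z_{\mu_a^{\mr t}}$. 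We will also track the normalisation. The perturbative, $\mu=\emptyset$ contribution of the vertical strips has to be divided out on both sides, and we will check it against the definition of $\ZNek_{N,m}$, which already omits it. The signs $(-1)^{n}$ and $(-Q^{[C_a]})^{|\mu_a|}$ will be matched using the sign conventions in \eqref{eq: defn PT series}.

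The main obstacle is this bookkeeping step: showing that Arbesfeld's K‑theoretic vertex rules (orientation of legs, square roots $K_{\vir}^{1/2}$, framing factors, and the sign/$q^{1/2}$ shifts) agree precisely with the physics refined vertex conventions of \cite{IKV09:refTopVert,Tak08:RefVertInstanton}. We would first try to settle it on the two building blocks, a single $\cO(-1)\oplus\cO(-1)$ edge and a single $\cO(-2)\oplus\cO$ edge, where \Cref{prop: strip PT formula} provides an independent check. After that we would propagate the conventions along the ladder, using \cite{Arb21:KthDTHilbSurf} for the $N=2,m=0$ case as a consistency check. Once \eqref{eq: PT gauge} is established, \Cref{conj: PT GV} follows: $\log\ZNek_{N,m}$ has a known Gopakumar--Vafa‑type expansion in which each $Q^\beta$ coefficient is a rational function of $q_4^{1/2},q_5^{1/2}$ of the required form.
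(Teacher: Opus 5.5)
Your plan is essentially the paper's proof. It uses the same steps: rigidity via \cite[Prop.~7.5]{NO14:membranes}, a refined limit with the preferred direction along the horizontal lines $C_a$, Arbesfeld's limiting vertex and edge formulas (the $\cO(-1-n)\oplus\cO(-1+n)$ edge with $n=2a-N-m-1$ produces $f_{\mu_a}^{2a-N-m-1}$), and the skew Schur Cauchy identity to sum out the vertical partitions into the two $\cR$ factors. One correction to your normalisation step: nothing perturbative is divided out on either side, because $\ZNek_{N,m}$ as defined in the paper already contains the $\mu=\emptyset$ vertical-strip contributions (for instance $\cR_{\emptyset,\emptyset}\neq 1$), and it matches the instanton partition function only up to exactly these terms, so the full PT series equals it directly.
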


We defer the proof of \Cref{thm: gauge PT correspondence} to \Cref{sec: gauge PT correspondence proof}.

In this \namecref{thm: gauge PT correspondence} we adopted the same convention for the torus action on $\CYth_{N,m} \times \Aaff{2}$ as in \Cref{sec: strips} assuming that the torus $\gpT' \times \Gm_q$ acts on the two affine directions with characters $$\kappa^{1/2}q \qquad \text{and} \qquad \kappa^{1/2}q^{-1}$$ where $\kappa$ denotes the character by which $\gpT'$ scales the holomorphic three-form of $\CYth_{N,m}$.

\begin{rmk}
    \label{rmk: curve relations gauge}
    We stress that \eqref{eq: PT gauge} is an equality in the completed semigroup ring of effective curve classes $R\llbracket \Heff{X_{N,m}} \rrbracket$ (for some suitable ring $R$) and is generally \textit{false} in the freely generated power series ring \smash{$R\llbracket Q^{[C_1]},Q^{[F_1]},\ldots \rrbracket$}. This means we need to impose all relations in $H_2^{\mr{eff}}(X_{N,m},\bZ)$ among the curve class variables. Note that the semigroup of effective curve classes is generated by $[F_1],\ldots,[F_{N-1}],[C]$ where each $F_a$ is a compact irreducible component of a fibre $X_{N,m}\rightarrow \bP^1$ and $C$ is a specific section of the fibration. More specifically, we have
    \begin{equation*}
        C = C_{\lfloor \frac{N+m+1}{2}\rfloor}
    \end{equation*}
    with the subscript chosen as in \Cref{fig: X N m toric diagram}. The class of all other torus preserved sections $C_a$ can be decomposed as
    \begin{equation*}
        [C_a] = [C]+\sum_{b=a}^{\left\lfloor \frac{N+m+1}{2}\right\rfloor-1} (N+m-2b) [F_b]
    \end{equation*}
    when $a \leq \frac{N+m+1}{2}$ and otherwise we have
    \begin{equation*}
        [C_a] = [C]+\sum_{b=\left\lceil \frac{N+m+1}{2}\right\rceil}^{a-1} (2b-N-m) [F_b]\,.
    \end{equation*}
    These give all relations among the curve class variables in $R\llbracket \Heff{X_{N,m}} \rrbracket$.
\end{rmk}

Before proving \Cref{thm: gauge PT correspondence} let us first name the implications of this \namecref{thm: gauge PT correspondence} for the Gromov--Witten theory of $X_{N,m}\times \Aaff{2}$.

\begin{conj}
\label{conj: gauge GW correspondence}
    Suppose $(X_{N,m},\gpT)$ is equivariantly Calabi--Yau. Then if $|m|<N$ we have
    \begin{equation}
        \label{eq: gauge GW correspondence}
        \exp \, \GWarg{X_{N,m}\times \Aaff{2}}{\gpT}{} = \ZNek_{N,m}(\re^{u \epsilon_4},\re^{u \epsilon_5};Q)\,.
    \end{equation}
    In particular, \Cref{conj: gen GV} and \Cref{conj: rigidity} hold.
\end{conj}

\begin{rmk}
    We stress that \Cref{rmk: curve relations gauge} equally applies to the above \namecref{conj: gauge GW correspondence}. Moreover, we remark that all infinite products on the right-hand side of equation \eqref{eq: gauge GW correspondence} need to be expanded as
    \begin{align*}
        \cR_{\mu,\nu}(q_4,q_5;Q) & = \Exp\Big(Q\cdot r_{\mu,\nu}(q_4,q_5)\Big) \prod_{\substack{0\leq i \leq \ell(\nu)-1 \\ 0\leq j \leq  \ell(\mu)-1}} \frac{1 - Q \, q_4^{i+1} q_5^{-j-1}}{1 - Q \,q_4^{i-\mu_j+1} q_5^{-j+\nu_i-1}}
    \end{align*}
    where
    \begin{equation*}
    \begin{split}
        & \qquad r_{\mu,\nu}(q_4,q_5)\coloneqq \\
        &\frac{q_4q_5^{-1}}{(1-q_4)(1-q_5^{-1})}+\frac{\sum_{i=0}^{\ell(\nu)-1} q_4^{i+1} q_5^{-\ell(\mu)-1}(q_5^{\nu_i}-1)}{1-q_5^{-1}} +  \frac{\sum_{j=0}^{\ell(\mu)-1} q_5^{-j-1} q_4^{\ell(\nu)+1}(q_4^{-\mu_j}-1)}{1-q_4}\,.
    \end{split}
    \end{equation*}
\end{rmk}

\Cref{conj: gauge GW correspondence} is known to be true in the anti-diagonal specialisation $\epsilon_4 = - \epsilon_5$. Indeed, by \eqref{eq: GW 5 to 3 reduction strip} in this limit the right-hand side of \eqref{eq: gauge GW correspondence} reduces to the generating series of Gromov--Witten invariants of $X_{N,m}$ with $\epsilon_4$ acting as the genus counting variable. Using the topological vertex, this threefold series was matched with the anti-diagonal specialisation of the instanton partition function in \cite{IKP06:SUNtopstr}. Beyond the case where $\epsilon_4 = - \epsilon_5$, we are able to provide the following numerical evidence for our conjecture.

\begin{evidence}
    \label{ev: gauge GW correspondence}
    We verified \Cref{conj: gauge GW correspondence} for all $(N,m)$ listed in \Cref{tab: conj gauge GW correspondence} up to the indicated genus $g_\text{max}$ and for all curve classes for which some positive integer multiple appears in the given list.
    We also verified that the assumption $|m|<N$ is necessary. Indeed, for $(N,m)=(3,4)$ the membrane index $\Omega_{[C_3]}$ depends on equivariant parameters other than $\epsilon_4$ and $\epsilon_5$.
    This claim and \Cref{tab: conj gauge GW correspondence} may be reproduced using the code provided in \cite{website}.

    \begin{table}[H]
        \centering
        \def\arraystretch{1.5}
        \begin{tabular}{c|c|c|c}
            $\boldsymbol{N}$ & $\boldsymbol{m}$ & 
            $\boldsymbol{g_\text{max}}$ &
            \textbf{Verified curve classes}
            \\\hline\hline
            $2$ & $-1$ & $2$
            &
            $[C_1], 4[C_2], 3[C_2], 4[F_1],3[F_1], 2[F_1]+2[C_2]$
        \\\hline
        $2$ & $0$ & $3$
        & \multirow{2}{*}{$\begin{matrix}4[C_1], 3[C_1], 4[C_2], 3[C_2], 4[F_1], 3[F_1], \\ 2[F_1]+2[C_2], 2[F_1]+2[C_1]+2[C_2]\end{matrix}$}
        \\\cline{1-3}
        $2$ & $1$ & $3$
        &
        \\\hline
        $3$ & $-2$ & $2$
        &
        $[C_1], [C_2], 2[F_1]+2[F_2], [C_2]+[F_1], [C_2]+[F_1]+[F_2]$
        \\\hline
        $3$ & $-1$ & $2$
        & 
        $[C_1], 2[C_2], 2[F_1]+2[F_2]$
        \\\hline
        $3$ & $0$ & $2$
        & 
        \multirow{3}{*}{$
        \begin{matrix}
            [C_1], 2[C_2], 4[F_1], 3[F_1], 2[F_2]+2[C_3], 4[C_3], \\3[C_3],2[F_1]+2[F_2], 2[C_2]+2[F_1]+2[F_2]
        \end{matrix}$}
        \\\cline{1-3}
        $3$ & $1$ & $2$
        & 
        \\\cline{1-3}
        $3$ & $2$ & $2$
        & 
        \\\hline
        \end{tabular}
        \caption{Evidence for \Cref{conj: gauge GW correspondence}}
        \label{tab: conj gauge GW correspondence}
    \end{table}
\end{evidence}

\subsection{Proof of \texorpdfstring{\Cref{thm: gauge PT correspondence}}{Theorem 3.1}}
\label{sec: gauge PT correspondence proof}
In this section we will show that the \PTs\ of $\CYth_{N,m}$ indeed agrees with the Nekrasov partition function {$\ZNek_{N,m}$}. We will achieve this by evaluating the earlier via the refined topological vertex. Indeed, the same strategy was pursued by Arbesfeld who computed the \PTs\ in the case $N=2$ where the four non-compact fibres in \Cref{fig: X N m toric diagram} got compactified to $\bP^1$s \cite[Sec.~5]{Arb21:KthDTHilbSurf}. Here, essentially the only task for us is to match formal calculations in Pandharipande--Thomas theory with well-established methods in the physics literature \cite{IKV09:refTopVert,Tak08:RefVertInstanton}.

\subsubsection{Generalities on the refined topological vertex.}
The refined topological vertex is a special limit of the K-theoretic vertex \smash{$\hat{V}^{\PT}_{\mu_1,\mu_2,\mu_2}(q_1,q_2,q_3;q)$} we considered in \Cref{sec: Kth vertex}. To describe the limit, we consider a one-parameter subgroup
\begin{equation*}
\begin{tikzcd}[row sep = 0em]
    \sigma: &[-2.7em] \Gm \ar[r, hook] & \Gm[3]_{q_1,q_2,q_3}\,, \\
    & z \ar[r, maps to] & (z^{r_1},z^{r_2},z^{r_3})
\end{tikzcd}
\end{equation*}
with $r_1+r_2+r_3=0$. It turns out that for special choices of slopes $(r_1,r_2,r_3)$, the limit $\lim_{z \rightarrow 0} \hat{V}^{\PT}_{\boldsymbol{\mu}}(\boldsymbol{q} \cdot \sigma(z) ; q)$ admits an explicit formula in the variables
\begin{equation*}
    q_4 = \kappa^{1/2}q \qquad q_5 = \kappa^{1/2}q^{-1} 
\end{equation*}
where $\kappa = (q_1 q_2 q_3)^{-1}$. The two limits we will require are (i) $r_1 \gg 0 > r_3 \gg r_2$ and (ii) $r_2 \gg r_3 > 0 \gg r_1$. We will use labels $\gg$ and $>$ on the edges of a GKM graph in order to indicate these two choices of slope. The following formulae are proven in \cite[Prop.~4.6]{Arb21:KthDTHilbSurf}:
\begin{center}
\begin{tikzpicture}
    \draw (0,1) -- (0,0) node[midway, sloped] {\small$\gg$};
    \draw (0,0) -- (-0.7,-0.7) node[midway, sloped] {\small$\ll$};
    \draw (0,0) -- (1,0) node[midway, sloped] {\small$<$};
    \node[circle,inner sep=1.5pt,fill=black] at (0,0){};
    \node[left] at (0,0.5) {\small$q_1,\mu_1$};
    \node[left] at (-0.35,-0.3) {\small$q_2,\mu_2$};
    \node[below] at (0.5,0) {\small$q_3,\mu_3$};
    \node at (-2.5,0) {(i)};
    \node[anchor = west] at (2.5,0) {${\displaystyle\lim_{z\rightarrow 0}} ~\hat{V}^{\PT}_{\mu_1,\mu_2,\mu_3}\big(\boldsymbol{q} \cdot \sigma(z) ; q\big) = C_{\mu_1,\mu_2,\mu_3}\big(q_5^{-1},q_4\big)$};
    \draw (0,-1.5) -- (0,-2.5) node[midway, sloped] {\small$\ll$};
    \draw (0,-2.5) -- (-0.7,-3.2) node[midway, sloped] {\small$\gg$};
    \draw (0,-2.5) -- (1,-2.5) node[midway, sloped] {\small$>$};
    \node[circle,inner sep=1.5pt,fill=black] at (0,-2.5){};
    \node[left] at (0,-2) {\small$q_1,\mu_1$};
    \node[left] at (-0.43,-2.8) {\small$q_2,\mu_2$};
    \node[below] at (0.5,-2.5) {\small$q_3,\mu_3$};
    \node at (-2.5,-2.5) {(ii)};
    \node[anchor = west] at (2.5,-2.5) {${\displaystyle\lim_{z\rightarrow 0}} ~ \hat{V}^{\PT}_{\mu_1,\mu_2,\mu_3} \big(\boldsymbol{q} \cdot \sigma(z) ; q\big) = C_{\mu_1,\mu_2,\mu_3} \big(q_4,q_5^{-1}\big)$};
\end{tikzpicture}
\end{center}
Here, $C_{\mu_1,\mu_2,\mu_3}(t,q)$ is the refined topological vertex of Iqbal--Koz{\c{c}}az--Vafa \cite{IKV09:refTopVert}:
\begin{equation}
    \label{eq: ref top vert}
    C_{\mu_1,\mu_2,\mu_3}(t,q) = t^{-\frac{\sqabs{\mu_1^{\mr{t}}}}{2}} q^{-\frac{\sqabs{\mu_2}}{2}} ~ \tilde{Z}_{\mu_3^{\mr{t}}}(t ,q^{-1}) \, 
    \sum_{\delta} \left(\frac{q}{t}\right)^{\frac{|\delta|}{2}} s_{\mu_1/\delta}\big(t^{-\rho}q^{-\mu_3^{\mr{t}}}\big) ~ s_{\mu_2^{\mr{t}}/\delta}\big(q^{-\rho}t^{-\mu_3}\big)  \,.
\end{equation}
In this formula we write $s_{\nu/\delta}(t^{-\rho}q^{-\mu})$ for the skew Schur function $s_{\nu/\delta}(\mathbf{x})$ evaluated at $$\mathbf{x}=(t^{1/2} q^{-\mu_0},t^{3/2} q^{-\mu_1},\ldots)\,.$$

Similarly, edge terms $\hat{E}_\mu(q_1,q_2,q_3;q)$ admit particularly simple formulae in refined limits:
\begingroup
\allowdisplaybreaks
\begin{align*}
& \begin{tikzpicture}
    \draw (0,1) -- (0,0) node[midway, sloped] {\small$>$};
    \draw (0,0) -- (-0.7,-0.7) node[midway, sloped] {\small$\gg$};
    \draw (0,0) -- (1.5,0) node[midway, sloped] {\small$\gg$};
    \draw (1.5,1) -- (1.5,0) node[midway, sloped] {\small$>$};
    \draw (1.5,0) -- (2.2,-0.7) node[midway, sloped] {\small$\gg$};
    \node[circle,inner sep=1.5pt,fill=black] at (0,0){};
    \node[circle,inner sep=1.5pt,fill=black] at (1.5,0){};
    \node[below, anchor = south] at (0.3,-0.6) {\small$\mu$};
    \node[below, anchor = south] at (1.2,-0.6) {\small$\mu^{\mr{t}}$};
    \node[anchor = west] at (4,0) {${\displaystyle\lim_{z\rightarrow 0}} ~\hat{E}_{\mu}\big(\boldsymbol{q} \cdot \sigma(z) ; q\big) =q_4^{-\frac{|\mu|-\sqabs{\mu}}{2}} q_5^{-\frac{|\mu| + \sqabs{\mu} }{2}} $};
\end{tikzpicture} \\
& \begin{tikzpicture}
    \draw (0,-1.5) -- (0,-2.5) node[midway, sloped] {\small$<$};
    \draw (0,-2.5) -- (-0.7,-3.2) node[midway, sloped] {\small$\ll$};
    \draw (0,-2.5) -- (1.5,-2.5) node[midway, sloped] {\small$\ll$};
    \draw (1.5,-1.5) -- (1.5,-2.5) node[midway, sloped] {\small$<$};
    \draw (1.5,-2.5) -- (2.2,-3.2) node[midway, sloped] {\small$\ll$};
    \node[circle,inner sep=1.5pt,fill=black] at (0,-2.5){};
    \node[circle,inner sep=1.5pt,fill=black] at (1.5,-2.5){};
    \node[below, anchor = south] at (0.3,-3.1) {\small$\mu$};
    \node[below, anchor = south] at (1.2,-3.1) {\small$\mu^{\mr{t}}$};
    \node[anchor = west] at (4,-2.5) {${\displaystyle\lim_{z\rightarrow 0}} ~\hat{E}_{\mu}\big(\boldsymbol{q} \cdot \sigma(z) ; q\big) = q_4^{\frac{|\mu|+\sqabs{\mu}}{2}} q_5^{\frac{|\mu|-\sqabs{\mu}}{2}} $};
\end{tikzpicture} \\
& \begin{tikzpicture}
    \draw (0,-4) -- (0,-5) node[midway, sloped] {\small$\gg$};
    \draw (0,-5) -- (-0.7,-5.7) node[midway, sloped] {\small$\ll$};
    \draw (0,-5) -- (1.5,-5) node[midway, sloped] {\small$<$};
    \draw (1.05,-4.1) -- (1.5,-5) node[midway, sloped] {\small$\gg$};
    \draw (1.5,-5) -- (2.35,-5.53) node[midway, sloped] {\small$\gg$};
    \node[circle,inner sep=1.5pt,fill=black] at (0,-5){};
    \node[circle,inner sep=1.5pt,fill=black] at (1.5,-5){};
    \node[below, anchor = south] at (0.3,-5.6) {\small$\mu$};
    \node[below, anchor = south] at (1.2,-5.6) {\small$\mu^{\mr{t}}$};
    \node[anchor = west] at (4,-5) {${\displaystyle\lim_{z\rightarrow 0}} ~\hat{E}_{\mu}\big(\boldsymbol{q} \cdot \sigma(z) ; q\big) = (-1)^{|\mu|} \,  q_4^{\frac{\sqabs{\mu^{\mr{t}}}}{2} } q_5^{-\frac{\sqabs{\mu}}{2}} \, f_{\mu}(q_4,q_5)^n$};
\end{tikzpicture}
\end{align*}
\endgroup
Here, the line bundle over the torus stable $\bP^1$ splits as $\cO_{\bP^1} \oplus \cO_{\bP^1}(-2)$ in the first two cases and as $\cO_{\bP^1}(-1-n) \oplus \cO_{\bP^1}(-1+n)$ in the last. The first two formulae have already been proven in \cite[Prop.~4.3]{Arb21:KthDTHilbSurf}. The third formula has only been proven in the special cases $n\in\{-1,0,1\}$ in \cite[Prop.~4.2 \& 4.3]{Arb21:KthDTHilbSurf}. Since the methods of loc.~cit. readily generalise to arbitrary $n$, we leave the verification of the last formula for general $n$ as an exercise to the reader.

\subsubsection{Rigidity.}
To apply the refined topological vertex to the computation of the \PTs\ of $\CYth_{N,m}$ we use the observation of Nekrasov and Okounkov that the only dependence of this series on the equivariant weights is trough $\kappa$ whenever the moduli space of stable pairs is proper \cite[Prop.~7.5]{NO14:membranes}. Since this is the case for the threefold $\CYth_{N,m}$ when $|m|<N$ we may compute the \PTs\ in any limit of a cocharacter $\sigma : \Gm \hookrightarrow \gpT'$ fixing the holomorphic threeform:
\begin{equation*}
    \PTarg{}{\CYth_{N,m}}{\CYth_{N,m}\times \Aaff{2}}{\Gm_q}{\gpT'} \big( \boldsymbol{q};q\big) = \lim_{z \rightarrow 0} \PTarg{}{\CYth_{N,m}}{\CYth_{N,m} \times \Aaff{2}}{\Gm_q}{\gpT'}{}\big(\boldsymbol{q}\cdot \sigma(z);q\big)\,.
\end{equation*}
We choose $\sigma$ in a way so that the slopes at each vertex are as illustrated in \Cref{fig: X N m pref dir}. In physics terminology this means to choose the preferred direction along the horizontal lines.
\begin{figure}
	\centering\begin{tikzpicture}
    \foreach \pos in {{4.8,2},{0.4,2},{6.3,4},{-0.3,4}}{
		\node[circle,inner sep=1.5pt,fill=black] at (\pos){};
	}
    \draw (4.8,2) -- (0.4,2) node[pos=0.2, anchor=south] {\small$\mu_2^{\mr{t}}$} node[pos=0.8, anchor=south] {\small$\mu_2$};
    \node at (3,2) {\small$<$};
    \draw[dashed] ($(4,0)!0.2!(4.8,2)$) -- ($(4,0)!0.4!(4.8,2)$);
    \draw ($(4,0)!0.4!(4.8,2)$) -- (4.8,2) node[near start, sloped] {\small$\ll$} node[pos = 0.45, anchor=west] {\small$\nu_2$};
    \draw[dashed] ($(0.4,2)!0.8!(0.4,0)$) -- ($(0.4,2)!0.6!(0.4,0)$);
    \draw ($(0.4,2)!0.6!(0.4,0)$) -- (0.4,2) node[near start, sloped] {\small$\ll$} node[pos = 0.5, anchor=east] {\small$\lambda_2^{\mr{t}}$};
    \draw (4.8,2) -- (6.3,4) node[midway, sloped] {\small$\ll$} node[pos=0.7, anchor=west] {\small$\nu_1$} node[pos=0.2, anchor=west] {\small$\nu_1^{\mr{t}}$};
    \draw (6.3,4) -- (-0.3,4) node[pos=0.2, anchor=south] {\small$\mu_1^{\mr{t}}$} node[pos=0.8, anchor=south] {\small$\mu_1$};
    \node at (3,4) {\small$<$};
    \draw (-0.3,4) -- (0.4,2) node[midway, sloped] {\small$\gg$} node[pos=0.8, anchor=east] {\small$\lambda_1$} node[pos=0.25, anchor=east] {\small$\lambda_1^{\mr{t}}$};
    \draw (6.3,4) -- (7.5,5) node[midway, sloped] {\small$\ll$};
    \draw (-0.3,4) -- (-1,5) node[midway, sloped] {\small$\gg$};
    \end{tikzpicture}
	\caption{The choice of $\sigma$. And the decoration of the GKM graph by partitions.}
	\label{fig: X N m pref dir}
\end{figure}

\subsubsection{Applying the vertex formalism.}
Now we decompose the \PTs\ into vertex and edge contributions using formula \eqref{eq: PT vertex formula no ins} and take the refined limit $z \rightarrow 0$. Then if we label the GKM graph by partitions $\boldsymbol{\lambda},\boldsymbol{\mu},\boldsymbol{\nu}$ as indicated in \Cref{fig: X N m pref dir}, the left vertical, right vertical and horizontal edges respectively contribute with edge terms
\begin{gather*}
    \prod_{a=1}^{N-1}Q^{|\lambda_a|[F_a]} q_4^{\frac{|\lambda_a| - \sqabs{\lambda_a^{\mr{t}}} }{2}} q_5^{\frac{|\lambda_a|+\sqabs{\lambda_a^{\mr{t}}}}{2}}\,,  \qquad \prod_{a=1}^{N-1}Q^{|\nu_a|[F_a]} q_4^{-\frac{|\nu_a|+\sqabs{\nu_a^{\mr{t}}}}{2}} q_5^{-\frac{|\nu_a|-\sqabs{\nu_a^{\mr{t}}}}{2}}\,, \\
    \prod_{a=1}^{N} \left(-Q^{[C_a]}\right)^{|\mu_a|} f_{\mu_a}(q_4,q_5)^{2a-N-m-1} \, q_4^{\frac{\sqabs{\mu_a^{\mr{t}}}}{2} } q_5^{-\frac{\sqabs{\mu_a}}{2}}\,.
\end{gather*}
The vertices contribute
\begin{align*}
    \prod_{a=1}^{N} C_{\lambda_{a-1},\lambda_a^{\mr{t}},\mu_a}\big(q_5^{-1} ,q_4\big) ~ C_{\nu_{a},\nu_{a-1}^{\mr{t}},\mu_a^{\mr{t}}} \big(q_4,q_5^{-1}\big)
\end{align*}
where we set $\lambda_0 = \lambda_N = \nu_0 = \nu_N = 0$. If we combine all terms and plug in the formula for the refined topological vertex \eqref{eq: ref top vert} we obtain
\begin{equation}
\label{eq: PT gauge before simplification}
\begin{split}
    &\qquad \PTarg{}{\CYth_{N,m}}{\CYth_{N,m}\times \Aaff{2}}{\Gm_q}{\gpT'}{} = \\
    & \sum_{\boldsymbol{\mu}} \prod_{a=1}^N \left[\left(-Q^{[C_a]}\right)^{|\mu_a|} f_{\mu_a}(q_4,q_5)^{2a-N-m-1} \, q_4^{\frac{\sqabs{\mu_a^{\mr{t}}}}{2} } q_5^{-\frac{\sqabs{\mu_a}}{2}} \, \tilde{Z}_{\mu_a}\big(q_4,q_5\big) \, \tilde{Z}_{\mu_a^{\mr{t}}}\big(q_5^{-1},q_4^{-1}\big) \right] \\
    & \qquad\times \sum_{\boldsymbol{\lambda}, \boldsymbol{\delta}} \prod_{a=1}^{N} \left((q_4q_5)^{-\frac{1}{2}} Q^{[F_a]}\right)^{|\lambda_a|} (q_4 q_5)^{\frac{|\delta_a|}{2}} \, s_{\lambda_{a-1}/\delta_a}\big(q_5^{\rho}q_4^{-\mu_a^{\mr{t}}}\big) ~ s_{\lambda_{a}/\delta_a}\big(q_4^{-\rho}q_5^{\mu_a}\big) \\
    & \qquad\times \sum_{\boldsymbol{\nu}, \boldsymbol{\eta}} \prod_{a=1}^{N} \left((q_4q_5)^{\frac{1}{2}} Q^{[F_a]}\right)^{|\nu_a|} (q_4 q_5)^{-\frac{|\eta_a|}{2}} \, s_{\nu_{a-1}/\eta_a}\big(q_5^{\rho}q_4^{-\mu_a^{\mr{t}}}\big) ~ s_{\nu_{a}/\eta_a}\big(q_4^{-\rho}q_5^{\mu_a}\big)\,.
\end{split}
\end{equation}

\subsubsection{Simplification.}
The last two lines can be simplified by using homogeneity of Schur functions and successively applying Cauchy's identity
\begin{equation*}
    \sum_{\lambda} s_{\lambda/\delta_1}(\mathbf{x}) ~ s_{\lambda/\delta_2}(\mathbf{y}) = \frac{1}{\prod_{i,j \geq 0} (1-x_i y_j)} \sum_{\lambda} s_{\delta_1/\lambda}(\mathbf{y}) ~ s_{\delta_2/\lambda}(\mathbf{x})
\end{equation*}
where $\mathbf{x}=(x_0,x_1,\ldots)$ and $\mathbf{y}=(y_0,y_1,\ldots)$. Concretely, the identity gives
\begin{equation*}
\begin{split}
    &\sum_{\boldsymbol{\lambda}, \boldsymbol{\delta}} \prod_{a=1}^{N} \left((q_4q_5)^{-\frac{1}{2}} Q^{[F_a]}\right)^{|\lambda_a|} (q_4 q_5)^{\frac{|\delta_a|}{2}} \, s_{\lambda_{a-1}/\delta_a}\big(q_5^{\rho}q_4^{-\mu_a^{\mr{t}}}\big) ~ s_{\lambda_{a}/\delta_a}\big(q_4^{-\rho}q_5^{\mu_a}\big) \\
    &\hspace{14em}= \prod_{1\leq a<b\leq N} \cR_{\mu_a^{\mr{t}}, \mu_b } \Big(q_4,q_5 \, ; \, q_5 {\textstyle\prod_{c=a}^{b-1}} Q^{[F_c]}\Big)
\end{split}
\end{equation*}
with a similar expression for the last line of \eqref{eq: PT gauge before simplification}. If we use this expression to simplify the right-hand side of equation \eqref{eq: PT gauge before simplification} we arrive at the desired formula \eqref{eq: PT gauge}. This closes the proof of \Cref{thm: gauge PT correspondence}.

\section{Case studies of local projective spaces}
\label{sec: local proj spaces}
In this section we will numerically investigate the relationship between the \GWs , the \PTs\ and \meminds\ of local projective spaces of the form
\begin{equation*}
    Z=\Tot{}_{\bP^{n}} \, \cL_{n+1} \oplus \ldots \oplus \cL_{5}\,,
\end{equation*}
under the assumption that all line bundles $\cL_i$ have non-positive degree. 

We have already discussed local rational curves ($n=1$) in \Cref{ex: Ar x C,ex: chain of O(-1)s} as a special instance of strip geometries with precisely one torus stable $\bP^1$. Another geometry which is already discussed extensively in the literature is local $\bP^2$, i.e.~$Z = \Tot{}_{\bP^{2}} \big(\cO(-3)\big) \times \Aaff{2}$. We will not recite any of the well-known features of this geometry here and refer the reader to the existing literature \cite{BS24:refGW,KPS23:RefBPS,IK17:RefTopStrKP2,KPS23:RefBPS,HK12:DiredtIntOmega,HKPK13:OmegaBmodRigid,CKK14:RefBPSStabPrs}.

This means the local geometries left to study are
\begin{gather*}
    \Tot{}_{\bP^2} \, \cO(-1) \oplus \cO(-1) \oplus \cO(-1) \,, \qquad \Tot{}_{\bP^2} \, \cO(-2) \oplus \cO(-1) \oplus \cO\,, \\[0.5em]
    \Tot{}_{\bP^3} \, \cO(-2) \oplus \cO(-2) \,, \qquad \Tot{}_{\bP^3} \, \cO(-3) \oplus \cO(-1) \,, \qquad \Tot{}_{\bP^3} \, \cO(-4) \oplus \cO \,.
\end{gather*}
Among these, only the left two geometries were previously discussed in the literature, however, under strong restrictions on the torus action \cite{Sch26:CY5TV,PZ10:CY5}. We defer the discussion of $\Tot{}_{\bP^2} \cO(-2) \oplus \cO(-1) \oplus \cO$ to \Cref{sec: CY4 vanishing} and only mention here that conjecturally all its associated enumerative invariants vanish. This vanishing is special instance of a more generally occurring curious vanishing for local surfaces.

\subsection{\texorpdfstring{$\boldsymbol{\Tot{}_{\bP^2}\,\cO(-1)^{\oplus 3}}$}{Tot O(-1)\^{}3 -> P2}}
As our first example from our list, we consider
\begin{equation*}
    \CYfi = \Tot{}_{\bP^2} \, \cO(-1) \oplus \cO(-1) \oplus \cO(-1)\,.
\end{equation*}

\subsubsection{Geometric setup}
In order to fix a particularly convenient basis for the equivariant variables let us present $\CYfi$ as a quotient:
\begin{equation*}
    \CYfi = \Aaff{6} \setminus V(x_0 x_1 x_3) \Big/ \Gm
\end{equation*}
Here, the one-dimensional torus acts on affine space via
\begin{equation*}
    \big( t , (x_0,\ldots, x_5) \big) \longmapsto (t x_0, t x_1, t x_2, t^{-1} x_3, t^{-1} x_4, t^{-1} x_5)\,.
\end{equation*}
Let us denote by $2\epsilon_0,\ldots,2\epsilon_5$ the tangent weights of the natural $\Gm[6]$-action on affine six-space. This action descends to the quotient $\CYfi$. The tangent weights at the fixed points
\begin{equation*}
    [1:0:0]\,,\qquad [0:1:0]\,,\qquad [0:0:1]
\end{equation*}
of the zero-section $\bP^2 \subset \CYfi$ read
\begin{align*}
    \big(2\epsilon_1 - 2\epsilon_0 \,, && 2\epsilon_2 - 2\epsilon_0 \,, && 2\epsilon_3 + 2\epsilon_0 \,, && 2\epsilon_4 + 2\epsilon_0 \,, && 2\epsilon_5 + 2\epsilon_0\big) \,,\\
    \big(2\epsilon_0 - 2\epsilon_1 \,, && 2\epsilon_2 - 2\epsilon_1 \,, && 2\epsilon_3 + 2\epsilon_1 \,, && 2\epsilon_4 + 2\epsilon_1 \,, && 2\epsilon_5 + 2\epsilon_1\big) \,,\\
    \big(2\epsilon_0 - 2\epsilon_2 \,, && 2\epsilon_1 - 2\epsilon_2 \,, && 2\epsilon_3 + 2\epsilon_2 \,, && 2\epsilon_4 + 2\epsilon_2 \,, && 2\epsilon_5 + 2\epsilon_2\big)\,\phantom{,}
\end{align*}
respectively. A sketch of the GKM graph of $\CYfi$ can be found in \Cref{fig: P2 O min 1 GKM graph}. Finally, let $\gpT\cong \Gm[5]$ be a subtorus of $\Gm[6]$ satisfying the Calabi--Yau condition
\begin{equation*}
    \epsilon_0 + \epsilon_1 + \epsilon_2 + \epsilon_3 +\epsilon_4 + \epsilon_5 = 0\,.
\end{equation*}

\begin{figure}
	\centering
    \begin{tikzpicture}[scale = 1.3]
    \node[circle,inner sep=1.5pt,fill=black] at (0,0){};
    \node[circle,inner sep=1.5pt,fill=black] at (3,0){};
    \node[circle,inner sep=1.5pt,fill=black] at (0,3){};
    \draw (0,0) -- (3,0) node[pos = 0.25, anchor=south, sloped] {\tiny$2\epsilon_1 - 2\epsilon_0$} node[pos = 0.7, anchor=south, sloped] {\tiny$2\epsilon_0 - 2\epsilon_1$};
    \draw (3,0) -- (0,3) node[pos = 0.3, anchor=north, sloped] {\tiny$2\epsilon_2 - 2\epsilon_1$} node[pos = 0.7, anchor=north, sloped] {\tiny$2\epsilon_1 - 2\epsilon_2$};
    \draw (0,0) -- (0,3) node[pos = 0.25, anchor=north, sloped] {\tiny$2\epsilon_2 - 2\epsilon_0$} node[pos = 0.7, anchor=north, sloped] {\tiny$2\epsilon_0 - 2\epsilon_2$};
    \draw (0,0) -- (-1.06,-1.06) node[pos = 0.4, anchor=south, sloped] {\tiny$2\epsilon_3 + 2\epsilon_0$};
    \draw (3,0) -- (3,-1.5) node[pos = 0.4, anchor=south, sloped] {\tiny$2\epsilon_3 + 2\epsilon_1$};
    \draw (0,3) -- (-1.5,3) node[pos = 0.4, anchor=south, sloped] {\tiny$2\epsilon_3 + 2\epsilon_1$};
    \draw (0,0) -- (-1.06,1.06) node[pos = 0.45, anchor=north, sloped] {\tiny$2\epsilon_4 + 2\epsilon_0$};
    \draw (3,0) -- (3,1.5) node[pos = 0.45, anchor=north, sloped] {\tiny$2\epsilon_4 + 2\epsilon_1$};
    \draw (0,3) -- (-0.67,4.34) node[pos = 0.45, anchor=south, sloped] {\tiny$2\epsilon_4 + 2\epsilon_2$};
    \draw (0,0) -- (1.06,-1.06) node[pos = 0.4, anchor=north, sloped] {\tiny$2\epsilon_5 + 2\epsilon_0$};
    \draw (3,0) -- (4.34,-0.67) node[pos = 0.4, anchor=south, sloped] {\tiny$2\epsilon_5 + 2\epsilon_1$};
    \draw (0,3) -- (1.5,3) node[pos = 0.4, anchor=south, sloped] {\tiny$2\epsilon_5 + 2\epsilon_2$};
    \end{tikzpicture}
	\caption{The GKM graph of $\CYfi = \Tot{}_{\bP^2} \, \cO(-1) \oplus \cO(-1) \oplus \cO(-1)$.}
	\label{fig: P2 O min 1 GKM graph}
\end{figure}

\subsubsection{Low-degree membrane indices}
To state conjectural low-degree formulae for \meminds\ of $\CYfi$ it is convenient to introduce some more notation: Let us write
\begin{align*}
     e_1 &= q_0 + q_1 + q_2\,,  &e_2 &= q_0 q_1 +q_0  q_2+ q_1 q_2\,, & e_3 &= q_0 q_1 q_2 \\
     \sigma_1 &= q_3 + q_4 + q_5\,,  &\sigma_2 &= q_3 q_4 +q_3  q_5+ q_4 q_5\,, & \sigma_3 &= q_3 q_4 q_5
\end{align*}
for elementary symmetric polynomials in the variables $q_i = \re^{u \epsilon_i}$. Moreover, for $n \in \bZ_{>0}$ let us denote by
\begin{equation*}
    \qnum{n}{q} \coloneqq \frac{q^{n/2}-q^{-n/2}}{q^{1/2}-q^{-1/2}}
\end{equation*}
the $q$-number $n$. Finally, we introduce the shorthand
\begin{equation*}
    \ol{q}_0 = q_0^{-1} q_1\,,\qquad\ol{q}_1 = q_1^{-1} q_2\,,\qquad \ol{q}_2 = q_2^{-1} q_0\,.
\end{equation*}
Then writing $H$ for the class of a line in $\bP^2$, we predict the following formulae for the \meminds\ of $\CYfi$ in low degree:
\begin{align}
    \label{eq: P2 O min 1 meminds} 
    \Omega_{H} & = - \prod_{i=0}^2\frac{1}{\qnum{2}{\ol{q}_i}} \,, \nonumber\\
    \Omega_{2 H} & = \prod_{i=0}^2\frac{1}{\qnum{2}{\ol{q}_i}\,\qnum{4}{\ol{q}_i} \,\qnum{2}{\ol{q}_i^2 \ol{q}_{i+1}} }  \nonumber \\[0.3em]
    & \quad \times\big(8+4 e_1 \sigma_1+3 e_1^2 \sigma_1^2-2 e_2 \sigma_1^2-6 e_1^2 \sigma_2+4 e_2 \sigma_2-e_1^4 \sigma_2^2+e_1^2 e_2 \sigma_2^2 +3 e_2^2 \sigma_2^2 \nonumber \\
    & \qquad -2 e_1 \sigma_2^2 \sigma_3^{-1} +9 e_1^3 \sigma_3-30 e_1 e_2 \sigma_3+2 e_1^4 \sigma_1 \sigma_3-2 e_1^2 e_2 \sigma_1 \sigma_3 -6 e_2^2 \sigma_1 \sigma_3  \nonumber \\
    & \qquad -2 e_1^3 e_2 \sigma_1^2 \sigma_3 +e_1 e_2^2 \sigma_1^2 \sigma_3+4 e_1^3 e_2 \sigma_2 \sigma_3 -2 e_1 e_2^2 \sigma_2 \sigma_3+e_1^3 e_2^2 \sigma_2^2 \sigma_3 -2 e_1 e_2^3 \sigma_2^2 \sigma_3 \nonumber \\
    & \qquad +e_1^6 \sigma_3^2-9 e_1^4 e_2 \sigma_3^2 +18 e_1^2 e_2^2 \sigma_3^2+9 e_2^3 \sigma_3^2-2 e_1^3 e_2^2 \sigma_1 \sigma_3^2 +4 e_1 e_2^3 \sigma_1 \sigma_3^2 \\
    & \qquad +e_1^2 e_2^3 \sigma_1^2 \sigma_3^2-e_2^4 \sigma_1^2 \sigma_3^2-2 e_1^2 e_2^3 \sigma_2 \sigma_3^2+2 e_2^4 \sigma_2 \sigma_3^2+2 e_1^3 e_2^3 \sigma_3^3-9 e_1 e_2^4 \sigma_3^3+e_2^6 \sigma_3^4\big)\,, \nonumber \\[0.8em]
    \Omega_{3H} &= \prod_{i=0}^2 \frac{\qnum{3}{\ol{q}_i}}{\qnum{2}{\ol{q}_i}\,\qnum{4}{\ol{q}_i}\,\qnum{6}{\ol{q}_i} \,\qnum{2}{\ol{q}_i^2 \ol{q}_{i+1}}   \qnum{2}{\ol{q}_i^3 \ol{q}_{i+1}}  \qnum{2}{\ol{q}_i^3 \ol{q}_{i+1}^2}} ~ \times  ~ \big( \cdots \big)\,, \nonumber \\[0.8em]
    \Omega_{4H} & = \cdots \nonumber
\end{align}
Here, all indices of $\ol{q}_i$ are understood modulo three. Our conjectural formulae for the numerator of the degree three and the degree four \memind\ were unfortunately too long to print here. However, they follow the same pattern: The numerator is a Laurent polynomial in the elementary symmetric polynomials and the denominator is a combination of $q$-numbers. The formulae can be found alongside our code in \cite{website}.


We stress that in accordance with \Cref{conj: gen GV,conj: PT GV}, all \meminds\ at worst develop denominators of two under the restriction $q_i \rightarrow q_{i+1}$, i.e.~$\ol{q}_i \rightarrow 1$, of the equivariant variables. Note that already in degree three this requires a non-trivial cancellation of odd factors.

\subsubsection{Comparison with GW invariants}
Under a specialisation of the torus weights, the low-degree \meminds\ of $\CYfi$ have already been computed by the second author. These calculations are in perfect agreement with the formulae \eqref{eq: P2 O min 1 meminds}.

\begin{prop} \cite[Thm. B \& Sec.~2.4]{Sch26:CY5TV}
     In the specialisation $\epsilon_3=-\epsilon_0+\epsilon_1-\epsilon_2$, $\epsilon_4=-\epsilon_0-\epsilon_1+\epsilon_2$ \Cref{conj: gen GV} holds for $\Tot{}_{\bP^2} \,\cO(-1)^{\oplus 3}$ in any degree $\beta = d H$. Moreover, for $d\leq 4$ the \meminds\ agree with \eqref{eq: P2 O min 1 meminds}. 
\end{prop}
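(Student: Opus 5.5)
The plan is to show that the specialisation $\epsilon_3=-\epsilon_0+\epsilon_1-\epsilon_2$, $\epsilon_4=-\epsilon_0-\epsilon_1+\epsilon_2$ is \emph{locally self-dual} in the sense of \cite[Def.~1.1]{Sch26:CY5TV}. Once this holds, every quintuple Hodge integral in the localisation formula \eqref{eq: loc formula GW} collapses to a Calabi--Yau triple Hodge integral.

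The first step is to check this at each vertex of the GKM graph in \Cref{fig: P2 O min 1 GKM graph}. The Calabi--Yau condition forces $\epsilon_5=\epsilon_0-\epsilon_1-\epsilon_2$. At the three fixed points the fibre weights become:
\begin{itemize}
\item at $[1:0:0]$, $2\epsilon_3+2\epsilon_0=2\epsilon_1-2\epsilon_2$ and $2\epsilon_4+2\epsilon_0=2\epsilon_2-2\epsilon_1$;
\item at $[0:1:0]$, $2\epsilon_4+2\epsilon_1=2\epsilon_2-2\epsilon_0$ and $2\epsilon_5+2\epsilon_1=2\epsilon_0-2\epsilon_2$;
\item at $[0:0:1]$, $2\epsilon_3+2\epsilon_2=2\epsilon_1-2\epsilon_0$ and $2\epsilon_5+2\epsilon_2=2\epsilon_0-2\epsilon_1$.
\end{itemize}
So at every fixed point two of the five tangent weights are opposite. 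Mumford's relation $\HodgeLambda[g]{\epsilon}\HodgeLambda[g]{-\epsilon}=(-1)^g\epsilon^{2g-2}$ removes this pair from each vertex integrand. What remains is a triple Hodge integral whose three weights sum to zero. For example, at $[1:0:0]$ these are $2\epsilon_1-2\epsilon_0$, $2\epsilon_2-2\epsilon_0$ and $4\epsilon_0-2\epsilon_1-2\epsilon_2$, with the analogous triples at the other points. The removed pair also produces a factor $(-1)^{g_v}\epsilon^{2g_v-2}$ times a power of $\epsilon$ attached to the flags; I would track this factor carefully.

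Next I would identify the surviving sum over localisation graphs with the localisation expansion of a \emph{formal} toric Calabi--Yau threefold graph. This graph is local $\bP^2$-like, but the retained leg at each vertex changes from vertex to vertex. The edge weights $\mr{wt}(e,\Gamma)$ must be checked to match, including the signs coming from the splitting of $N_{C_e/Z}\cong\cO(1)\oplus\cO(-1)^{\oplus 3}$ against the threefold framing. The expected outcome is that the vertex factors are Mari\~no--Vafa/topological vertex terms \cite{LLZ03:MarinoVafaFormula,LLLZ09:TV}, with framings determined by the retained weights. The leftover powers of $\epsilon$ should then reorganise so that the genus variable enters through $\qnum{n}{\ol{q}_i}$-type factors. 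Summing over all graphs by the gluing rules of the topological vertex then expresses $\exp\GWarg{\CYfi}{\gpT}{}$ as a Schur-function sum in the variables $\ol{q}_i$.

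For rationality and integrality in every degree, I would appeal to the structural results for topological vertex sums: these are rational with poles only at roots of unity in the $\ol{q}_i$, and Gopakumar--Vafa-type integrality holds after the multiple-cover subtraction \eqref{eq: generalised GV}. The subtlety here is that the framing shifts are half-integral at some vertices, which should account for the denominators $2$ permitted in \Cref{conj: gen GV}. For the second claim, I would expand the vertex sum in degrees $d\le 4$, perform the plethystic logarithm, and compare with \eqref{eq: P2 O min 1 meminds} restricted to the specialisation. This last step is mechanical and can be done by computer.

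The main obstacle I expect is the bookkeeping in the second step. I must show that the edge weights and the extra $\epsilon$-powers from the Mumford cancellation really assemble into a consistent topological vertex gluing, with the correct framing and sign at each of the three legs of the $\bP^2$. This is where I would spend most effort, starting by checking the degree one case $\Omega_H=-\prod_i\qnum{2}{\ol{q}_i}^{-1}$ directly.
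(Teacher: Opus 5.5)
Your vertex computation is correct. With $\epsilon_5=\epsilon_0-\epsilon_1-\epsilon_2$, every fixed point carries an opposite pair of fibre weights. Mumford's relation then reduces each vertex to a Calabi--Yau triple Hodge integral; in the paper's convention it reads $\HodgeLambda[g]{\epsilon}\HodgeLambda[g]{-\epsilon}=(-1)^{g-1}\epsilon^{2g-2}$, compare the class $C_v$ in the last section. This local self-duality is what the paper exploits by quoting \cite[Thm.~B]{Sch26:CY5TV}.

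The gap is the step that carries the statement to \emph{every} degree. You propose to glue with the ordinary topological vertex rules and then cite rationality and integrality results for topological vertex sums. Those results concern honest toric Calabi--Yau threefolds with a single genus variable, and the reduced sum here is not of that type.
\begin{itemize}
\item The self-dual pair, and hence the local genus variable, differs at each vertex. It is $\pm2(\epsilon_1-\epsilon_2)$ at $[1:0:0]$, $\pm2(\epsilon_2-\epsilon_0)$ at $[0:1:0]$ and $\pm2(\epsilon_0-\epsilon_1)$ at $[0:0:1]$. After exponentiation these are $\ol{q}_1^{\pm2}$, $\ol{q}_2^{\pm2}$, $\ol{q}_0^{\pm2}$, consistent with $\Omega_H=-\prod_i \qnum{2}{\ol{q}_i}^{-1}$ being one factor per fixed point.
\item The retained fibre switches along every edge: fibre $5$, $3$, $4$ respectively.
\item The standard cancellation of framing factors needs a common $q$ at both ends of an edge, so that $\tau+\tau'\in\bZ$ yields an integral framing. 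Here the non-integral framings at the two ends multiply \emph{different} genus variables.
\item The genus-zero contributions of multiple covers of a line with normal bundle $\cO(1)\oplus\cO(-1)^{\oplus 3}$ do not match the edge terms of any toric threefold. Together with the leftover powers of the self-dual weights at the flags, their simplification must be redone from scratch.
\end{itemize}
Showing that these pieces still assemble into rational functions with the allowed poles is a genuinely new statement. The formulae you want to match already contain mixed denominators such as $\qnum{2}{\ol{q}_i^2\ol{q}_{i+1}}$, not merely poles at roots of unity in a single $\ol{q}_i$. This is exactly the content of the locally self-dual vertex formalism of \cite{Sch26:CY5TV}, which your plan defers as bookkeeping.

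What you must prove is that, for each $d$, the $Q^{dH}$-coefficient of $\exp \GW(\CYfi,\gpT)$ lies in $R=\bZ[\tfrac12][q_i^{\pm1/2},(1-\prod_i q_i^{n_i/2})^{-1}]$ for the specialised torus. Once that holds, passing to $\Omega_{dH}$ needs no Gopakumar--Vafa-type theorem. With $\psi_k(q_i^{1/2})=q_i^{k/2}$, one has $\psi_p(f)\equiv f^p \bmod pR$ for every prime $p$, trivially so for $p=2$ since $2$ is invertible. Hence $R$ is a $\lambda$-ring and the plethystic logarithm preserves $R$-coefficients. In particular, your heuristic that half-integral framings account for the factor $\tfrac12$ is unnecessary. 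The comparison for $d\le 4$ is then a finite computation, but it too presupposes the correct vertex formalism.
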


Beyond this restriction on the torus action we can provide the following numerical evidence backing our conjectural formulae in \eqref{eq: P2 O min 1 meminds}.

\begin{evidence}
    \label{ev: P2 O min 1}
    Comparing with GW invariants, we verified the conjectured formulae for $\Omega_{dH}$ in \eqref{eq: P2 O min 1 meminds} up to certain genera, as summarized in \Cref{tab: evidence for P2_111}.
    This may be reproduced using the code in \cite{website}.

    \begin{table}
        \centering
        \def\arraystretch{1.5}
        \begin{tabular}{r||c|c|c|c}
            \textbf{Degree $\boldsymbol{d}$} &
            1&2&3&4
            \\\hline
            \textbf{Verified up to genus} & 
            5 & 4 & 3 & 3
        \end{tabular}
        \caption{Evidence for \Cref{conj: gen GV} regarding \meminds\ $\Omega_{dL}$ of $\Tot{}_{\bP^2}\,\cO(-1)^{\oplus 3}$.}
        \label{tab: evidence for P2_111}
    \end{table}
\end{evidence}

The GW theory of $\CYfi$ shares many features with that of the resolved conifold. For example in forthcoming work \cite{DS26:LocalPn}, D\"{a}nzer and the second author conjecture that the fixed-genus all-degree generating series of GW invariants of $\CYfi$ is a linear combination of polylogarithms. Based on the formulae \eqref{eq: P2 O min 1 meminds} one may for instance expect the following equality in genus two and three:
\begingroup
\allowdisplaybreaks
\begin{align*}
    & \hspace{-2em}\sum_{d>0} Q^d ~ \GWarg{\CYfi}{\gpT}{2,d H} = \\[-0.5em]
    &\frac{3}{64} (\ol{e}_2-\ol{\sigma}_2)\, \mr{Li}_{-2}(-Q) +  \frac{1}{64}\big(3(\ol{e}_2+\ol{\sigma}_2) - 2\ol{e}_1^2\big)\, \mr{Li}_{-1}(-Q)\,, \\[3.5em]
    &\hspace{-2em} \sum_{d>0} Q^d ~ \GWarg{\CYfi}{\gpT}{3,d H} = \\[-0.5em]
    & \frac{21 }{2560} (\ol{e}_2-\ol{\sigma}_2)^2\, \mr{Li}_{-5}(-Q) + \frac{11 }{1536}(\ol{e}_2-\ol{\sigma}_2) \big(3 (\ol{e}_2+\ol{\sigma}_2)-2 \ol{e}_1^2\big)\, \mr{Li}_{-4}(-Q) \\ 
    + & \frac{1}{1536} \big(8 \ol{e}_1^4-24 \ol{e}_1^2 (\ol{e}_2+\ol{\sigma}_2)+23 \ol{e}_2^2+26 \ol{e}_2 \ol{\sigma}_2+23 \ol{\sigma}_2^2\big)\, \mr{Li}_{-3}(-Q) \\ 
    + & \frac{1}{1536} (\ol{e}_2-\ol{\sigma}_2) \big(3 (\ol{e}_2+\ol{\sigma}_2)-2 \ol{e}_1^2\big) \, \mr{Li}_{-2}(-Q) + \frac{1}{3840}(\ol{e}_2-\ol{\sigma}_2)^2 \, \mr{Li}_{-1}(-Q) \,.
\end{align*}
\endgroup
Here, we write
\begin{equation*}
    \ol{e}_1 = \epsilon_0 + \epsilon_1 + \epsilon_2\,, \qquad \ol{e}_2 = \epsilon_0 \epsilon_1 + \epsilon_0 \epsilon_2 + \epsilon_1 \epsilon_2\,, \qquad \ol{\sigma}_2 = \epsilon_3 \epsilon_4 + \epsilon_3 \epsilon_5 + \epsilon_4 \epsilon_5\,.
\end{equation*}
for elementary symmetric polynomials. Note that the right-hand side of this equation is a rational function in $Q$ that is invariant under
\begin{equation*}
    Q \longrightarrow Q^{-1} \qquad (\epsilon_0,\epsilon_1,\epsilon_2) \longleftrightarrow (\epsilon_3,\epsilon_4,\epsilon_5) \,.
\end{equation*}
Note that this symmetry is to be expected by the crepant resolution conjecture \cite{DS26:LocalPn}. However in contrast to the resolved conifold, the \GWs\ of $\CYfi$ does not exhibit a \textit{conifold gap} when expanded around $Q=1$. One may thus raise the following question.

\begin{question}
    \label{qu: P2 O min 1}
    Is there a substitute for the conifold gap property which alongside the finite generation and crepant transformation property characterises the all-degree generating series of $\Tot{}_{\bP^2} \,\cO(-1)^{\oplus 3}$ uniquely?
\end{question}

\subsubsection{Comparison with PT invariants}
To form the \PTs\ we first have to choose a one-dimensional subtorus of $\gpT$ whose fixed locus inside $\CYfi$ is a threefold containing the zero section $\bP^2$. Since $\CYfi$ is the total space of three identical line bundles over $\bP^2$ all possible choices are essentially equivalent after identifying the equivariant variables. We choose
\begin{equation*}
\begin{tikzcd}[row sep=0em]
    \Gm_q \times \gpT' \ar[r,hook] & \Gm[6]\,, \\
    \big(q,(q_0,q_1,q_2,q_3)\big) \ar[r,maps to] & (q_0^2,q_1^2,q_2^2,q_3^2,q ,(q_0 q_1 q_2 q_3)^{-2}q^{-1})
\end{tikzcd}
\end{equation*}
With this choice we have $\CYth = \CYfi^{\Gm_q} =\Tot{}_{\bP^2} \, \cO(-1)$. One can now numerically compute the \PTs\ in low curve degree via the methods described in \Cref{sec: PT numerics} and compare the results with our formulae for \meminds .

\begin{evidence}
    \label{ev: P2 O min 1 PT}
    \Cref{conj: PT GV} holds for $\CYfi = \Tot{}_{\bP^2} \,\cO(-1)^{\oplus 3}$ in degree $d\leq 4$ for $7$  
    orders beyond the leading order in $q$ with $\Omega_{dH}$ as given in equation \eqref{eq: P2 O min 1 meminds}. Together with \Cref{ev: P2 O min 1} this is compatible with the \GWPT\ (\Cref{conj: GW PT}). Moreover, confirming \Cref{conj: PT sym}, in the computed range the \PTs\ is symmetric under the permutation of
    \begin{equation}
        \label{eq: P2 O min 1 PT sym}
        q_3\,,\qquad q_4=q^{-1/2}\,, \qquad q_5=q^{1/2}q_0q_1q_2q_3 \,.
    \end{equation}
\end{evidence}

\begin{rmk}
    \label{rmk: PT to Omega pipeline}
    Let us briefly explain how we arrived at our guess for the \meminds\ \eqref{eq: P2 O min 1 meminds}. Actually, it was mostly informed by our numerical calculation of PT invariants described above. First note that all \meminds\ in equation \eqref{eq: P2 O min 1 meminds} are Laurent polynomials in $q$. This means one only has to compute a finite number of PT invariants per degree in order to determine $\Omega_{dH}$ uniquely. Whenever these calculations exceeded the capabilities of our computer we used the anticipated permutation symmetry in the variables \eqref{eq: P2 O min 1 PT sym} to fix the remaining terms.
\end{rmk}

\subsection{\texorpdfstring{$\boldsymbol{\Tot{}_{\bP^3}\, \cO(-a)\oplus \cO(a-4)}$}{Tot O(-a) + O(a-4) -> P3}}
Now let us consider
\begin{equation*}
    \CYfi = \Tot{}_{\bP^3}\big( \cO(-a)\oplus \cO(a-4)\big)\qquad a\in\{0,\ldots,4\}\,.
\end{equation*}

\subsubsection{Geometric setup}
As in the last section we present this fivefold as the quotient
\begin{equation*}
    \CYfi = \Aaff{6} \setminus V(x_0 x_1 x_3) \Big/ \Gm
\end{equation*}
where the one-dimensional torus acts on affine space via
\begin{equation*}
    \big( t , (x_0,\ldots, x_5) \big) \longmapsto (t x_0, t x_1, t x_2, t x_3, t^{-a} x_4, t^{a-4} x_5)\,.
\end{equation*}
As before we denote by $2\epsilon_0,\ldots,2\epsilon_5$ the tangent weights of the natural $\Gm[6]$-action on affine space. The action descends to the quotient $\CYfi$. The tangent weights at the fixed point $[1:0:0:0]\in\bP^3 \subset \CYfi$ read
\begin{align*}
    2\epsilon_1 - 2\epsilon_0 \,, && 2\epsilon_2 - 2\epsilon_0 \,, && 2\epsilon_3 - 2\epsilon_0 \,, && 2\epsilon_4 + 2a\epsilon_0 \,, && 2\epsilon_5 + 2(4-a)\epsilon_0
\end{align*}
with similar expressions for the other fixed points. Finally, let $\gpT\cong \Gm[5]$ be a subtorus of $\Gm[6]$ realising the Calabi--Yau condition
\begin{equation*}
    \epsilon_0 + \epsilon_1 + \epsilon_2 + \epsilon_3 +\epsilon_4 + \epsilon_5 = 0\,.
\end{equation*}

\subsubsection{Low-degree \meminds}
\begin{table}
    \centering
    \def\arraystretch{1.5}
    \begin{tabular}{c|l}
        $\boldsymbol{a}$ & $\boldsymbol{\Omega_{L}}$ \\ \hline
        $0$ & $(q_4^2+1+q_4^{-2})\big(2 e_1 e_3 e_4^{-1}-2 - e_2^2 e_4^{-1}\big)$ \\
        $1$ & $\big((e_1 e_3 e_4 - e_4^2) q_4^2 - e_2 e_4  + (e_1 e_3 - e_4)q_4^{-2} \big) \big/ \prod_{0\leq i < j \leq 3} (q_i+q_j)$ \\
        $2$ & $0$
    \end{tabular}
    \caption{The degree one membrane index of $\Tot{}_{\bP^3}\big( \cO(-a)\oplus \cO(a-4)\big)$ for $a\in\{0,1,2\}$.}
    \label{tab: loc P3 mem ind d1}
\end{table}
Based on numerical experiments we found conjectural expressions for \meminds\ $\Omega_{dL}$ in low degree $d$ where $L$ denotes the class of a line in $\bP^3$. The range in which we were able to guess these invariants can be inferred from \Cref{tab: evidence for PTot P3 -a a-4}. We refer the reader interested in this data to \cite{website} and only present few of the \meminds\ here. For instance in degree one we expect the invariants listed in \Cref{tab: loc P3 mem ind d1}. In the table, we write $e_i$ for the $i$th elementary symmetric polynomial in $q_0,q_1,q_2,q_3$.

The fact that the degree-one \memind\ for $a=2$ vanishes is a coincidence. Already in degree two one finds that $\Omega_{2L}=2$ and the expression for $\Omega_{3L}$ is already an intricate Laurent polynomial in the equivariant variables. One may wonder whether this feature persists in higher degree (cf.~\cite[Conj.~2.8]{Sch26:CY5TV}). 

Similarly, the feature that $\Omega_{L}$ is a Laurent polynomial when $a=0$ appears to hold in higher degree as well. In contrast, for $a=1$ \meminds\ appear to have a more intricate pole structure. This raises the following general question.

\begin{question}
    \label{qu: local P3}
    Is there a sufficient condition under which the \memind\ of a Calabi--Yau fivefold is a Laurent polynomial?
\end{question}

\subsubsection{Comparison with GW invariants}
For $a=2$, the low-degree \meminds\ of $\CYfi$ have been computed and compared with GW invariants in earlier work of the second author.

\begin{prop} \cite[Thm.~B \& Sec.~2.5]{Sch26:CY5TV}
    \Cref{conj: gen GV} holds for $\CYfi = \Tot{}_{\bP^3} \, \cO(-2)^{\oplus 2} $ in any degree $\beta = d L$ in the specialisation $\epsilon_4=-\epsilon_0-\epsilon_1$. For $d\leq 3$ the \meminds\ obey the general formulae found in \cite{website}.
\end{prop}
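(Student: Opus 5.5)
The plan is to reduce the fivefold quintuple Hodge integrals to triple Hodge integrals by using the specialisation $\epsilon_4=-\epsilon_0-\epsilon_1$, and then to resum with topological-vertex technology, in the same way as \Cref{prop: strip GW formula limit}.

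\textbf{Step 1: the local pairing at each fixed point.} Consider a fixed point $[1:0:0:0]$. Its tangent weights are
\[
2\epsilon_1-2\epsilon_0,\quad 2\epsilon_2-2\epsilon_0,\quad 2\epsilon_3-2\epsilon_0,\quad 2\epsilon_4+4\epsilon_0,\quad 2\epsilon_5+4\epsilon_0 .
\]
Under $\epsilon_4=-\epsilon_0-\epsilon_1$, the fourth weight becomes $2\epsilon_0-2\epsilon_1$, which is minus the first. By symmetry this happens at $[0:1:0:0]$ as well. At the remaining two fixed points one needs a different pair of opposite weights, so the first thing to do is check which weights cancel there; if necessary one picks a different cancelling pair, for example using the Calabi--Yau condition to pair the $\cO(-2)$ fibre weight with a tangent weight of $\bP^3$.

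At every vertex where two weights are opposite, Mumford's relation $\Lambda_g(\epsilon)\Lambda_g(-\epsilon)=(-1)^g\epsilon^{2g-2}$ (in the normalisation of \eqref{eq: loc formula GW}) removes two Hodge classes. This leaves a triple Hodge integral of Mariño--Vafa type, with the remaining three weights obeying a local Calabi--Yau relation.

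\textbf{Step 2: resummation into vertices.} Next, sum \eqref{eq: loc formula GW} over genus at fixed decorated graph. The vertex contributions become one-leg, two-leg or three-leg topological vertices in the variables $q_i=\re^{u\epsilon_i}$, via the Mariño--Vafa formula and the LLLZ vertex \cite{LLLZ09:TV,LLZ03:MarinoVafaFormula}. The edge contributions are explicit rational functions in the $q_i$.

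\textbf{Step 3: rationality and integrality.} Summing over graphs then expresses $\GWarg{\CYfi}{\gpT}{\beta}$ as a finite sum, for each degree $d$, of products of Schur-function vertex expressions. From this one reads off that each term is a rational function of $q_i^{1/2}$ with poles only of the form $1-\prod q_i^{n_i/2}$. This is the framework of \cite[Thm.~B]{Sch26:CY5TV}, and I would invoke it directly. The multiple-cover inversion in \eqref{eq: generalised GV} is then a M\"obius-type computation that preserves this class of rational functions.

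Integrality of the coefficients in $\bZ[\tfrac12]$ I would deduce as in the Gopakumar--Vafa integrality argument for the topological vertex. Concretely, one identifies the resummed expression with a plethystic exponential of Laurent polynomials, using integrality of the coefficients of Schur-function vertex expansions.

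\textbf{Step 4: low degrees.} For $d\le3$ the claimed formulae follow by a finite computation: enumerate all localisation graphs, evaluate the vertex expressions in closed form, and compare with the formulae in \cite{website}.

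\textbf{Main obstacle.} I expect the hardest part to be Step 1 at the fixed points where the specialisation does not directly produce an opposite pair. Making a uniform choice of pairing that is compatible along the edges of the GKM graph, so that the edge weights match the framing conventions of the vertex, will need careful bookkeeping of framings.
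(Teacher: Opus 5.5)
You propose the same shape of argument that the paper relies on. The paper gives no proof of its own: it cites \cite[Thm.~B]{Sch26:CY5TV}, where, as the paper describes, local self-duality reduces the quintuple Hodge integrals vertex by vertex to triple ones. As written, however, Steps~1--3 have genuine gaps.

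\textbf{Step 1.} Write $p_0,\dots,p_3$ for the coordinate points of $\bP^3$ and $L_{ij}$ for the line through $p_i,p_j$. The pairing is forced, and the point you flag as the main obstacle is not one. The Calabi--Yau condition gives $\epsilon_5=-\epsilon_2-\epsilon_3$, so at $p_2$ the fibre weight $2\epsilon_5+4\epsilon_2=2\epsilon_2-2\epsilon_3$ is opposite to the tangent weight of $L_{23}$, and similarly at $p_3$. At $p_0,p_1$ the $\cL_4$-fibre weight is opposite to the tangent weight of $L_{01}$.

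The real problem is that every cancelled pair contains a \emph{compact} direction. Take a graph with an edge over $L_{01}$ or $L_{23}$. The vertex integral left after Mumford's relation still contains $(\epsilon_{f(e,v)}/d_e-\psi_e)^{-1}$, where $\epsilon_{f(e,v)}$ is the cancelled weight and is not among the three surviving weights. That is not a Mari\~no--Vafa/LLZ/LLLZ integral, so Step~2 cannot be applied to it.

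The missing idea is that all such graphs vanish. Under the specialisation, the character $2(\epsilon_0+\epsilon_1+\epsilon_4)$ by which $\cL_4|_{L_{01}}$ differs from $K_{L_{01}}$ becomes trivial. Hence for every degree-$d$ cover the obstruction space $H^1(\bP^1,\cO(-2d))$ has a summand of weight $2(\epsilon_0+\epsilon_1+\epsilon_4)=0$, while no denominator in \eqref{eq: loc formula GW} vanishes. The same holds for $L_{23}$, using $\epsilon_2+\epsilon_3+\epsilon_5=0$. Only after discarding these graphs are you left with the closed chain $L_{02},L_{21},L_{13},L_{30}$, with vertices of at most two legs. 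Three-leg vertices never occur.

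\textbf{Steps 2--3.} These skip the actual content of the cited result.
\begin{itemize}
\item At each vertex Mumford's relation leaves a factor $\pm w_v^{2g_v-2}$, where $\pm w_v$ is the cancelled pair. So the genus parameter of the vertex function at $v$ is effectively $u w_v$: it is $\pm 2u(\epsilon_1-\epsilon_0)$ at $p_0,p_1$ and $\pm 2u(\epsilon_3-\epsilon_2)$ at $p_2,p_3$. Every surviving edge joins vertices of the two kinds.
\item Along $L_{02}$, the summand $\cO(1)\oplus\cO(-2)$ of the normal bundle spanned by the direction towards $p_1$ and the $\cL_4$-fibre is self-dual at $p_0$ but not at $p_2$. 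So the fivefold edge weight contains a factor with no threefold analogue.
\item Consequently individual graph contributions are not functions of the $q_i$ at all. Edge weights are rational in the $\epsilon_i$, and the framing factors are exponentials of $u w_v$ times ratios of weights. Your claim that ``each term is a rational function of $q_i^{1/2}$'' is therefore false.
\item Rationality only appears after resumming over edge partitions. The standard gluing, i.e.\ orthogonality of characters with one common $q$, does not apply as is. Supplying that gluing is exactly what the cited work does, so invoking \cite[Thm.~B]{Sch26:CY5TV} in Step~3 is circular: it is the statement itself.
\item Your integrality step aims at a plethystic exponential of Laurent polynomials, i.e.\ at Laurent polynomiality of all $\Omega_{dL}$. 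The paper only poses that as an open question for this geometry (cf.~\cite[Conj.~2.8]{Sch26:CY5TV}).
\end{itemize}

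\Cref{conj: gen GV} needs less. Once a gluing rule expresses $\exp \GW$ as a series in $Q$ and the vertex variables with $\bZ[\tfrac12]$ coefficients, integrality follows because the plethystic logarithm preserves such coefficients. Inverting \eqref{eq: generalised GV} preserves the allowed pole structure. Step~4 is then only a finite check.
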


Beyond this limit we can provide the following numerical evidence for \Cref{conj: gen GV}.

\begin{evidence}
    \label{ev: local P3}
    We have verified the conjectured formulae for the \meminds\ $\Omega_{dL}$ of $\Tot{}_{\bP^3} \, \cO(-a)\oplus \cO(a-4)$ in all degrees and up to the genera shown in \Cref{tab: evidence for PTot P3 -a a-4}.
    This can be reproduced using the code provided in \cite{website}.

    \begin{table}
        \centering
        \def\arraystretch{1.5}
        \begin{tabular}{r||c|c|c||c|c||c|c|c}
        $\boldsymbol{a}$ &
        \multicolumn{3}{c||}{$\boldsymbol{0}$}
        &
        \multicolumn{2}{c||}{$\boldsymbol{1}$}
         &
        \multicolumn{3}{c}{$\boldsymbol{2}$}
        \\\hline
             \textbf{Degree $\boldsymbol{d}$} &
            1 & 2 & 3 &
            1 & 2 &
            1 & 2 & 3
            \\\hline
            \textbf{Verified up to genus } & 
            5 & 3 & 3 &
            5 & 5 &
            5 & 3 & 3
        \end{tabular}
        \caption{Evidence for \Cref{conj: gen GV} regarding the \meminds\ $\Omega_{dL}$ of $\Tot{}_{\bP^3} \, \cO(-a)\oplus \cO(a-4)$.}
        \label{tab: evidence for PTot P3 -a a-4}
    \end{table}
\end{evidence}

\subsubsection{Comparison with PT invariants}
We also checked our conjectural formulae for \meminds\ against numerical PT calculations.

\begin{evidence}
    \label{ev: local P3 PT}
    For all the degrees listed in \Cref{tab: evidence for PTot P3 -a a-4} the \PTs\
    \begin{equation*}
        \PTarg{dL}{\bP^3}{\Tot{}_{\bP^3}\cO(-a)\oplus \cO(a-4)}{\Gm_{q_4}}{\gpT'}
    \end{equation*}
    matches our conjectural expression for the \meminds\ up to $7$ orders beyond the leading order in $q$. Together with \Cref{ev: local P3} this is consistent with the \GWPT . Moreover verifying \Cref{conj: PT sym}, in the computed range the series is symmetric under exchanging the line bundles $\cL_4$ and $\cL_5$, that is, it is invariant under
    \begin{equation*}
        a, (q_4,q_5)\qquad \longleftrightarrow \qquad 4-a,(q_5,q_4)\,.
    \end{equation*}
    These results may be reproduced using the accompanying code \cite{website}.
\end{evidence}

\section{Curious vanishing for local surfaces}
\label{sec: CY4 vanishing}
In this section we will investigate local surfaces of the form
\begin{equation}
    \label{eq: CY4 times A1}
    \CYfi = \Tot{}_S ~ \cL_1 \oplus \cL_2 \oplus \cO_{S}
\end{equation}
where $S$ is a smooth projective surface and $\cL_1$, $\cL_2$ are sufficiently negative line bundles. Among others this includes the local projective space $\Tot{}_{\bP^2} \, \cO(-2) \oplus \cO(-1) \oplus \cO_{\bP^2} $ whose analysis we skipped earlier in \Cref{sec: local proj spaces}.

\subsection{Vanishing of the GW series}
Remarkably, under a properness assumption on the moduli space of stable maps to the fourfold
\begin{equation*}
    \CYfo = \Tot{}_S \, \cL_1 \oplus \cL_2
\end{equation*}
we experimentally find that the \GWs\ of $\CYfi$ vanishes.

\begin{conj}
    \label{conj: curious CY4 vanishing}
    Suppose $(\CYfi,\gpT)$ is equivariantly Calabi--Yau and $\beta$ is an effective curve class in $\CYfi$ so that $\Mbar_g(\CYfo,\beta)$ is proper for all $g\geq 0$. Then 
    \begin{equation*}
        \GWarg{\CYfi}{\gpT}{\beta}= 0\,.
    \end{equation*}
\end{conj}

\begin{rmk}
    The assumption of the \namecref{conj: curious CY4 vanishing} is for instance met when $\cL_i = \cO_S(-D_i)$ for $D_i$ a smooth curve with $D_i^2 >0$, $i\in\{1,2\}$.
\end{rmk}

\begin{rmk}
    The vanishing claimed in \Cref{conj: curious CY4 vanishing} is rather curious as the \GWs\ of the fourfold $\CYfo$ generally \textit{does not} vanish. See \cite{KP08:CY4} for explicit case studies. The additional affine direction $\Aone$ which leads to an insertion of $\lambda$-classes appears crucial for the vanishing to hold. The assumption that the $\gpT$-action is Calabi--Yau appears similarly crucial.
\end{rmk}

As with most of our conjectures, we can prove \Cref{conj: curious CY4 vanishing} under a restrictive assumption on the torus action.

\begin{prop}
    \label{prop: curious CY4 vanishing limit}
    \Cref{conj: curious CY4 vanishing} holds under the following assumptions: There are divisors $D_1,D_2$ with zero-dimensional intersection $D_1\cap D_2$ satisfying $\cL_i = \cO_S(-D_i)$, $i\in\{1,2\}$, and $D_2$ is a smooth curve. Moreover, $\gpT=\Gm$ acts trivially on $\Tot \,\cO_S(-D_1)$ and with opposite weights on the fibres of $\cO_{S}(-D_2)$ and $\cO_S$. Then the vanishing holds for all effective curve classes $\beta$ satisfying $\cL_i \cdot \beta < 0$, $i\in\{1,2\}$.
\end{prop}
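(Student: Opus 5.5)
Here $\gpT=\Gm$ acts only on the fibres of $\cO_S(-D_2)\oplus\cO_S$, with weights $\epsilon$ and $-\epsilon$, and trivially on $W\coloneqq\Tot\,\cO_S(-D_1)$. The plan is to use virtual localisation with respect to this one-dimensional torus. Its fixed locus in $\CYfi$ is $W$, so $\Mbar_g(\CYfi,\beta)^{\gpT}=\Mbar_g(W,\beta)$. This space is proper because $\cL_1\cdot\beta<0$ forces every stable map into the zero section $S$. Moreover, $\cL_2\cdot\beta<0$ forces $H^0(C,f^*\cL_2)=0$. The fixed-part obstruction theory is the standard one of $W$, and the virtual normal bundle is
\[
-\mathbf{R}\pi_*f^*\cL_2\otimes\epsilon\;-\;\mathbf{R}\pi_*\cO\otimes(-\epsilon)
= R^1\pi_*f^*\cL_2\otimes\epsilon\;-\;\cO\otimes(-\epsilon)\;+\;\mathbb{E}^\vee\otimes(-\epsilon).
\]
Since $H^0(f^*\cL_2)=0$, the first term is a vector bundle of rank $-\cL_2\cdot\beta+g-1$. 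The invariant becomes
\[
\GWarg{\CYfi}{\gpT}{g,\beta}=\int_{[\Mbar_g(W,\beta)]^{\vir}} (-\epsilon)\, e_{\epsilon}\big(R^1\pi_*f^*\cL_2\big)\, c_{-\epsilon}(\mathbb{E}^\vee)\,,
\]
and the task is to show that this vanishes.

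The key input is the section of $\cL_2^{-1}=\cO_S(D_2)$ cutting out $D_2$. The plan is to write $\cL_2=\cO_S(-D_2)$ and use the sequence $0\to\cO_S(-D_2)\to\cO_S\to\cO_{D_2}\to0$ pulled back along $f$. This gives
\[
R^1\pi_*f^*\cL_2 = \mathbb{E}^\vee + \mathbf{R}\pi_*f^*\cO_{D_2}
\]
in K-theory. The first step is to make this precise on the open locus of maps not landing inside $D_2$, where $\mathbf{R}\pi_*f^*\cO_{D_2}$ is a $\pi_*$ of a zero-dimensional sheaf and hence a vector bundle of rank $D_2\cdot\beta$. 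The second step is to invoke Mumford's relation $c_{\epsilon}(\mathbb{E})\,c_{-\epsilon}(\mathbb{E})=(-1)^g\epsilon^{2g}$, or rather the factor $e_\epsilon(\mathbb{E}^\vee)c_{-\epsilon}(\mathbb{E}^\vee)$. After this substitution the integrand becomes $\epsilon^{2g}$ times the equivariant Euler class of $\mathbf{R}\pi_*f^*\cO_{D_2}$, which has positive rank, times a class of fixed degree.

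A dimension count should then finish the argument. The virtual dimension of $\Mbar_g(W,\beta)$ is $-K_W\cdot\beta+(\dim W-3)(1-g)=-(D_1\cdot\beta)$ adjusted by $K_S$. The integrand, once it has been expanded in $\epsilon$, must have non-equivariant degree matching this dimension. Combined with the homogeneity of degree $2g-2$, this forces a single power of $\epsilon$. I expect the total contribution to reduce to the top Chern class of a bundle of rank larger than the virtual dimension, which vanishes.

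The main obstacle is handling maps with components landing inside $D_2$, where the K-theoretic identity does not split cleanly. For this step I would first try a cosection or degeneration argument: deform to the normal cone of $D_2$, or use the nowhere-vanishing section direction, to show that the obstruction theory for maps into $\CYfi$ admits a surjective cosection away from a locus of excess dimension. Here the smoothness of $D_2$ and the finiteness of $D_1\cap D_2$ would be used to control the maps into $D_2$, which lie in the curve $D_2$ with $\cL_1|_{D_2}$ negative. The alternative I would try is to localise further on $D_2$ and reduce to a local curve, where the triple Hodge integral vanishing for $\Tot_{D_2}(N\oplus\cO)$ with anti-diagonal weights is known.
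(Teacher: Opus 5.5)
\textbf{There is a genuine gap.} Your localisation step is a correct reduction, up to bookkeeping. The fixed direction $\cO\otimes(-\epsilon)$ contributes $(-\epsilon)^{-1}$, not $(-\epsilon)$. The K-theory identity should read $R^1\pi_*f^*\cL_2=\mathbb{E}^\vee-\cO+V$ with $V\coloneqq\mathbf{R}\pi_*Lf^*\cO_{D_2}$. This holds globally in K-theory, so you never need $H^0(C,f^*\cL_2)=0$, which can fail when components land in $D_2$. With Mumford's relation you then get
\[
\GWarg{\CYfi}{\gpT}{g,\beta}=(-1)^{g-1}\epsilon^{2g-2}\int_{[\Mbar_g(W,\beta)]^{\vir}}e_\epsilon(V)=(-1)^{g-1}\epsilon^{2g-2}\int_{[\Mbar_g(W,\beta)]^{\vir}}c_{D_2\cdot\beta}(V)\,.
\]
This is where the argument stalls. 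By the Calabi--Yau condition $K_S=-D_1-D_2$, the virtual dimension of $\Mbar_g(W,\beta)$ is $-K_W\cdot\beta=D_2\cdot\beta$, which is exactly $\mathrm{rk}\,V$. So there is no bundle of rank exceeding the virtual dimension, and the dimension count you anticipate gives nothing; you are left with an honest number.

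What makes vanishing plausible is that, away from maps with components in $D_2$, $V=\pi_*\cO_{f^{-1}(D_2)}$ carries the nowhere-vanishing section $1$. But this only localises the number to maps with components inside $D_2$. Showing that this localised contribution is zero is the entire content of the proposition. Your last paragraph only lists candidate strategies for it, and one is unavailable: $\gpT=\Gm$ already acts trivially on $W$, so there is no further torus to localise with. Moreover, the relevant maps are not confined to $D_2$, so no reduction to a local curve applies.

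The missing idea is the paper's degeneration, carried out rather than named. Degenerate $S$ to the normal cone of $D_2$, with $\cD_2$ the closure of $D_2\times(\Aone\setminus\{0\})$. Then $\cO_{\cS}(-\cD_2)$ becomes trivial on the component $S$ of the special fibre, and the twist is pushed onto $D_\infty$ in the bubble $P$. On the $W$-side your Mumford computation now yields just $(-\epsilon^2)^{g-1}$, with no $e_\epsilon(V)$ insertion. The bubble contributions vanish unless they have genus zero, are multiples of a fibre class, and carry a single contact point. This uses cosection localisation, since the gluing surface $D=\Tot\,\cO_S(-D_1)|_{D_2}$ has trivial canonical bundle, plus Mumford's relation on $D$. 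The surviving bubble terms evaluate explicitly. One is left with relative invariants $\int_{[\Mbar_{g,0,\boldsymbol{d}}(W\mid D,\beta)]^{\vir}}1$ with $m\geq 1$ contact points, whose virtual dimension is $(-K_W-D)\cdot\beta+m=m>0$. In effect the insertion $c_{D_2\cdot\beta}(V)$ is traded for tangency conditions along $D$, and only then does a dimension count kill every term.
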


We present the proof of this \namecref{prop: curious CY4 vanishing limit} at the end of this section. The assumptions of the \namecref{prop: curious CY4 vanishing limit} in particular apply to $\Tot_{\bP^2} \cO(-2) \oplus \cO(-1) \oplus \cO$. For this and other local surfaces we can provide the following numerical evidence for \Cref{conj: curious CY4 vanishing} without any assumption on the torus action.

\begin{evidence}
    \label{ev: curious CY4 vanishing}
    We have verified \Cref{conj: curious CY4 vanishing} up to genus 3 in the cases listed in \Cref{tab: evidence curious CY4 vanishing}.
    We write $H$ for the class of a line in $\bP^2$ and $(d_1,d_2)$ for the class $d_1[\mathrm{pt}\times\bP^1]+d_2[\bP^1\times\mathrm{pt}]$ on $\bP^1\times\bP^1$.
    In the case of the final row, we additionally observed that $\Omega_{(1,0)}\neq 0$, which confirms the necessity of the properness assumption in \Cref{conj: curious CY4 vanishing}.
    We provide code to reproduce this claim and \Cref{tab: evidence curious CY4 vanishing} in \cite{website}.
    
    \begin{table}[H]
        \centering
        \def\arraystretch{1.5}
        \begin{tabular}{c|c|c|c}
            $\boldsymbol{S}$ & $\boldsymbol{\cL_1}$ & $\boldsymbol{\cL_2}$ & \textbf{Verified curve classes}
            \\\hline\hline
            $\bP^2$ & $\cO(-1)$ & $\cO(-2)$ & 
            $H, 2H, 3H, 4H$
            \\\hline
            $\bP^1\times\bP^1$ & $\cO(-1,-1)$ & $\cO(-1,-1)$ &
            $(1,0), (2,0), (3,0), (4,0), (1,1), (2,1), (2,2), (3,1)$
            \\\hline
            $\bP^1\times\bP^1$ & $\cO(-2,0)$ & $\cO(0,-2)$ &
            $(1,1), (2,1), (2,2),(3,1)$
            \\\hline
        \end{tabular}
        \caption{Evidence for \Cref{conj: curious CY4 vanishing}.}
        \label{tab: evidence curious CY4 vanishing}
    \end{table}
\end{evidence}

\subsection{Vanishing of the PT series}
By the \GWPT , from \Cref{conj: curious CY4 vanishing} we should expect that the \PTs\ of local surfaces of the form \eqref{eq: CY4 times A1} vanishes as well.

\begin{conj}
    \label{conj: curious CY4 vanishing PT}
    Suppose $(\CYfi,\gpT)$ is Calabi--Yau and let $\beta$ be an effective curve class in $\CYfi$ so that the moduli space \smash{$\Mbar{}_g^\bullet(\CYfo,\beta)$} of stable maps to the fourfold with possibly disconnected domain and no contracted connected components is proper for all $g\geq 0$. Then 
    \begin{equation*}
        \PTarg{\beta}{\CYth}{\CYfi}{\Gm_q}{\gpT'} = 0\,.
    \end{equation*}
\end{conj}

We can provide the following numerical evidence for the vanishing claimed.

\begin{evidence}
    \label{ev: curious CY4 vanishing PT}
    We checked the vanishing of the \PTs\ of
    \begin{equation*}
        \Tot{}_{\bP^2} \, \cO(-1) \oplus \cO(-2) \oplus \cO \qquad \text{and} \qquad \Tot{}_{\bP^1\times \bP^1} \, \cO(-1,-1) \oplus \cO(-1,-1) \oplus \cO
    \end{equation*}
    up to $6$ orders beyond leading order in $q$ and degree $d\leq 4$ respectively bi-degree $(d_1,d_2)$ with $d_1+d_2 \leq 4$. We checked the vanishing for all six respectively three distinct choices of threefolds $\CYth \subset \CYfi$. Note that this vanishing is also non-trivial evidence for the symmetry conjecture (\Cref{conj: PT sym}).

    Regarding the fivefold $\Tot{}_{\bP^1\times \bP^1} \,\cO(-2,0) \oplus \cO(0,-2) \oplus \cO$, note that curve classes of bi-degree $(d_1,0)$ and $(0,d_2)$ do not satisfy the properness criterion of \Cref{conj: curious CY4 vanishing PT}. And indeed, we find that the PT invariants in these curve classes are non-zero. However, we conjecture the \PTs\ takes the following simple form
    \begin{equation*}
    \begin{split}
        &\qquad \PTarg{}{\CYth}{\Tot{}_{\bP^1\times \bP^1} \cO(-2,0) \oplus \cO(0,-2) \oplus \cO}{\Gm_{q}}{\gpT'} = \\
        & \Exp\left(
        Q_1 \frac{q_2 q_4 q_5(1-q_1 q_3)(1+q_2q_4)}{(1-q_4)(1-q_2 q_4)(1-q_5)} +Q_2 \frac{q_1 q_3 q_5(1-q_2 q_4)(1+q_1q_3)}{(1-q_3)(1-q_1 q_3)(1-q_5)}\right)\,.
    \end{split}
    \end{equation*}
    Here, $q_1$ and $q_2$ denote the tangent weights at $(0,0)\in\bP^1 \times \bP^1$ and we write $q_3,q_4,q_5$ for the tangent weights on the fibres over this point. We verified this formula numerically up to $6$ orders beyond leading order in $q$ in bi-degree $(d_1,d_2)$ with $d_1+d_2 \leq 4$. This especially means that consistent with the \GWPT\ the \textit{connected} PT invariants in the bi-degrees listed in \Cref{tab: evidence curious CY4 vanishing} do vanish in low order in $q$.
\end{evidence}

\subsection{Proof of \texorpdfstring{\Cref{prop: curious CY4 vanishing limit}}{Proposition 5.4}}
We will prove the vanishing of the \GWs\ under the additional assumptions stated in \Cref{prop: curious CY4 vanishing limit} via a degeneration and dimension count argument. More precisely, we will perform a degeneration to the normal cone of $D_2$ in $S$. Recall that by assumption $D_2$ is a smooth curve in $S$ satisfying $\cL_2 \cong \cO_S(-D_2)$. Similarly, let us choose a divisor $D_1 \neq D_2$ (not necessarily assumed to be smooth) satisfying $\cL_1 \cong \cO_S(-D_1)$.

\subsubsection{A formula for the virtual fundamental class}
Under the degeneration to the normal cone of $D_2$ in $S$ we are going to relate the virtual fundamental class of the moduli space of stable maps to $\CYfi$
\begin{equation*}
    \Mbar_{g,n}(\CYfi,\beta) = \Mbar_{g,n}(S,\beta) \times \Aone
\end{equation*}
to the moduli space of stable log maps to the threefold
\begin{equation*}
    \CYth \coloneqq \Tot \,\cO_S(-D_1)
\end{equation*}
relative to the smooth divisor $D \coloneqq \Tot \cO_S(-D_1)|_{D_2}$. We denote this moduli space by
\begin{equation*}
    \Mbar_{g,n,\boldsymbol{d}}(\CYth \mid D, \beta)
\end{equation*}
where $n$ denotes the number of interior markings and $\boldsymbol{d}$ is an ordered partition of $D_2\cdot \beta$ indicating the tangency order of contact markings mapping to $D$. Further, let us write
\begin{equation*}
    \phi:\Mbar_{g,n,\boldsymbol{d}}(\CYth \mid D, \beta) \longrightarrow \Mbar_{g,n}(\CYth, \beta)
\end{equation*}
for the morphism that forgets the log structure and the contact markings. We will show the following relation.

\begin{prop}
    \label{prop: curious CY4 vanishing virt classes}
    Suppose, there are divisors $D_1,D_2$ with zero-dimensional intersection $D_1\cap D_2$ satisfying $\cL_i = \cO_S(-D_i)$, $i\in\{1,2\}$. Further suppose that $D_2$ is a smooth curve. Let $\Gm$ act trivially on $\CYth$ and with opposite weights $\pm\epsilon$ on the fibres of $\cL_2$ and $\cO_S$. Then for all effective curve classes $\beta$ with $\cL_i \cdot \beta < 0$, $i\in\{1,2\}$, we have
    \begin{equation*}
        [\Mbar_{g,n}(\CYfi,\beta)]^{\vir}_{\Gm} = \sum_{m > 0} \epsilon^{2g-1+m} \sum_{\substack{d_1,\ldots,d_m >0 \\ \sum_i d_i = D_2\cdot \beta}} \frac{(-1)^{D_2\cdot \beta + g -1}}{m! \prod_i d_i} \big( \phi_* [\Mbar_{g,n,\boldsymbol{d}}(X \mid D, \beta)]^{\vir} \big) \times [\Aone]_{\Gm}\,.
    \end{equation*}
\end{prop}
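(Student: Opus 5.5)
We will compute the virtual class of $\Mbar_{g,n}(\CYfi,\beta)$ by first splitting off the trivial directions and then degenerating along $D_2$.

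\textbf{Step 1: obstruction theory.} Since $\cL_i\cdot\beta<0$, every stable map to $\CYfi$ in class $\beta$ factors through $S\times\Aone$, where $\Aone$ is the fibre of $\cO_S$. The $\Aone$-factor only translates and gives the factor $[\Aone]_{\Gm}$. The obstruction theory is then that of $S$ plus the excess bundles $R^1\pi_*f^*\cL_1$, $R^1\pi_*f^*\cL_2$ and $R^1\pi_*\cO=\mathbb{E}^\vee$.

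Because $\Gm$ acts trivially on $X=\Tot\,\cO_S(-D_1)$, I would use functoriality to write
\[
[\Mbar_{g,n}(\CYfi,\beta)]^{\vir}_{\Gm}=e_{\Gm}\big(R^1\pi_*f^*\cL_2\otimes\epsilon\big)\,e_{\Gm}\big(\mathbb{E}^\vee\otimes(-\epsilon)\big)\cap[\Mbar_{g,n}(X,\beta)]^{\vir}\times[\Aone]_{\Gm}.
\]
The second Euler class equals $(-1)^g\Lambda^\vee_g$-type Hodge polynomial in $\epsilon$.

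\textbf{Step 2: degeneration.} Degenerate $S$ to the normal cone of $D_2$, giving $S\cup_{D_2}\mathbb{P}(N_{D_2}\oplus\cO)$. Correspondingly $X$ degenerates to $X\cup_D P$, where $P$ is a $\bP^1$-bundle over $D=\Tot\,\cO_S(-D_1)|_{D_2}$. The line bundle $\cL_2=\cO_S(-D_2)$ degenerates so that on the $S$-side it becomes $\cO_S(-D_2)(D_2)\cong\cO$, and all of its negativity is carried by the bubble $P$.

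Applying the degeneration formula of Li, or Kim--Lho--Ruddat / ACGS in the log setting, to the twisted class from Step 1 gives a sum over splittings. The terms are products of relative invariants of $(X\mid D)$ and of $(P\mid D_\infty)$, glued along $D$ with the factor $\prod d_i/m!$, together with the usual diagonal insertions.

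\textbf{Step 3: dimension count.} On the $(X\mid D)$ side the $\cL_2$-twist becomes trivial. Together with the Hodge factor $\mathbb{E}^\vee\otimes(-\epsilon)$, Mumford's relation $c(\mathbb{E})c(\mathbb{E}^\vee)=1$ leaves only the top $\epsilon$-power, and only from genus-zero-like pieces. I expect this to force the following shape.
\begin{itemize}
\item The $X$-side carries all of the genus and the class $\beta$.
\item The $P$-side consists of $m$ rational components, each a degree-$d_i$ multiple cover of a fibre of $P\to D$, totally ramified over $D_0$ and $D_\infty$.
\end{itemize}
The contribution of a fibre cover is local, modelled on $\bP^1$ with normal bundle $\cO\oplus\cO(-1)\oplus\cO(1)$ twisted by $\pm\epsilon$. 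Its value should be a computable double Hurwitz/Hodge integral of the form $(-1)^{d_i}\epsilon^{\bullet}/d_i^2$ after the gluing factor.

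Collecting signs and $\epsilon$-powers should produce $\epsilon^{2g-1+m}$, the sign $(-1)^{D_2\cdot\beta+g-1}$, and the weight $1/(m!\prod d_i)$. The virtual dimension of the log space is $2g-2+\ldots$ lower than that of the absolute space by $m$-dependent amounts, which is consistent with this $\epsilon$-power. Finally, push forward along $\phi$.

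\textbf{Main obstacle.} The hardest step is the vanishing argument in Step 3: showing that every configuration other than "all genus on the $X$-side, rational fibre covers on $P$" contributes zero. This includes configurations with genus or with $\beta$-components in $P$, and $X$-components touching $D$ in more complicated ways.

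I would first try to use the $\Gm$-weight grading. Terms with positive genus on $P$ acquire an extra factor $\lambda_g\lambda_{g}$-type product with trivial weight, which vanishes by Mumford's relation. Components of $P$ with nontrivial image in $D$ should be killed by a cosection or dimension argument, using that $\cL_1\cdot\beta<0$ and that $D_1\cap D_2$ is finite, so that $D$ is a local curve. The explicit local computation of the fibre-cover contribution is routine by localisation on $\bP^1$.
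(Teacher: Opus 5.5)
Your outline matches the paper's proof. Both write $[\Mbar_{g,n}(\CYfi,\beta)]^{\vir}_{\Gm}$ as a twisted class on $\Mbar_{g,n}(\CYth,\beta)\times\Aone$, degenerate to the normal cone of $D_2$ with $\cO_S(-D_2)$ extended as $\cO_{\cS}(-\cD_2)$ (trivial on the $\CYth$-side), apply the degeneration formula, reduce to star-shaped graphs whose bubble vertices are genus-zero fibre covers, and end with a local constant of size $1/d_i^2$. The gap is in the step you flag as the main obstacle: the reasons you give for the vanishing of the other graphs are not the ones that work.

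\textbf{Non-fibre bubble vertices.} Being a local curve is not enough here; e.g.\ $\Tot_{\bP^1}\cO(-1)$ has non-zero invariants. Nor does $\cL_1\cdot\beta<0$ play any role. The needed input is that $D=\Tot\,\cO_S(-D_1)|_{D_2}$ has trivial canonical bundle. The Calabi--Yau condition gives $K_S\cong\cO_S(-D_1-D_2)$, and adjunction gives $K_{D_2}\cong\cO_S(-D_1)|_{D_2}$. Hence $K_D\cong\cO_D$, so $D$ is holomorphic symplectic. Finiteness of $D_1\cap D_2$ enters only to make $\cD_1|_P$ a union of fibres, so that the bubble $Q$ is a $\bP^1$-bundle over $D$. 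Then $[\Mbar_v]^{\vir}$ is pulled back from the virtual class of maps to $D$ in class $\pi_*\beta_v\neq 0$, and that class vanishes by cosection localisation.

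\textbf{Positive-genus bubble vertices.} Your Mumford-relation idea is right, but it needs the same input. The relevant factor is $e(\mathbb{E}^\vee\boxtimes T_D)$, and only because $c_1(T_D)=0$ does Mumford's relation collapse it to $c_2(T_D)^{g_v}$, which vanishes on $D$. ``Trivial $\Gm$-weight'' alone does not do this.

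\textbf{Confinement of markings.} Nothing in your sketch excludes fibre-class bubble vertices with several contact points or with interior markings. Such a genus-zero cover can join several $\CYth$-vertices or close a loop in $\Gamma$. So the star shape, with one $\CYth$-vertex carrying genus $g$, class $\beta$ and all interior markings, does not follow from the two vanishings above. The paper needs a separate confinement argument here, as in \cite[Sec.~4.4.2]{BS24:refGW}.

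\textbf{The local model and the factors.} Your local model is off. In the degenerate fivefold a fibre of $Q\to D$ has normal bundle $\cO^{\oplus 2}\oplus\cO(-1)\oplus\cO$ with $\Gm$-weights $0,0,\epsilon,-\epsilon$. The bubble moduli space is relative only to the gluing section, with no tangency condition at the other section. Splitting the twisting classes $\mathbf{R}\pi_*f^*\cO_{\cS}(-\cD_2)$ and $\mathbf{R}\pi_*f^*\cO$ over the normalisation of the domain produces the node factor $(-\epsilon^2)^{|E(\Gamma)|}$ and the $\CYth$-vertex factor $(-\epsilon^2)^{g-1}$. Together with the correct local model, these fix $a_i=(-1)^{d_i-1}/d_i^2$ and the stated sign and $\epsilon$-power. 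For this you must carry the full K-theory classes, including the $H^0$ terms, not only $R^1\pi_*$ as in your Step 1.
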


Setting $n=0$ and pushing this identity forward to a point we get
\begin{equation*}
    \GWarg{\CYfi}{\gpT}{g,\beta} = \sum_{m > 0} \epsilon^{2g-2+m} \sum_{\substack{d_1,\ldots,d_m >0 \\ \sum_i d_i = D_2\cdot \beta}} \frac{(-1)^{D_2\cdot \beta + g -1}}{m! \prod_i d_i} \int_{[\Mbar_{g,0,\boldsymbol{d}}(X \mid D, \beta)]^{\vir}}1 \,.
\end{equation*}
The vanishing $\GWarg{\CYfi}{\gpT}{g,\beta}=0$ claimed in \Cref{prop: curious CY4 vanishing limit} then follows from the observation that the right-hand side integrand has dimension $m>0$. Thus, each integral vanishes for dimension reasons.

\subsubsection{Degeneration}
To prove \Cref{prop: curious CY4 vanishing limit} it therefore remains to show \Cref{prop: curious CY4 vanishing virt classes}. We will establish the \namecref{prop: curious CY4 vanishing virt classes} via a degeneration to the normal cone argument. All arguments will almost be verbatim to the ones in the proof of \cite[Thm.~4.1]{BS24:refGW} which concerned the situation $D_2=0$. For this reason we will skip most details and focus on highlighting the differences to loc.~cit. 

Consider the degeneration to the normal cone of $D_2$ in $S$ as a family $\cS$ over $\Aone_t$. The general fibre of this family is $\cS_{t\neq 0} =S$ while the special fibre $\cS_0$ has two irreducible components: $S$ and $P=\bP_{D_2}(\cO \oplus N_{D_2}S)$ glued along $D_2\subset S$ and the zero section in $P$. We extend this to a two-point degeneration of $\CYfi$ by specifying how the divisors $D_1,D_2$ extend to the family. We choose $\cD_1$ to be the pre-image of $D_1 \times \Aone$ under $\cS \rightarrow S\times \Aone$ and $\cD_2$ as the closure of $D_2 \times \Aone\setminus\{0\}$ in $\cS$. The latter intersects the special fibre $\cS_0$ in the infinity section $D_\infty \subset P$ while the restriction of $\cD_1$ to $P$ is supported on fibres of $P\rightarrow D_2$. This is where we use the assumption that the intersection $D_1 \cap D_2$ is zero-dimensional.

This provides us with a two-point degeneration of $\CYfi$:
\begin{equation*}
    \cZ = \Tot \, \cO_{\cS}(-\cD_1) \oplus \cO_{\cS}(-\cD_2) \oplus \cO_{\cS}\,.
\end{equation*}
We illustrate this setup in \Cref{fig: degeneration}. The total space $\cZ$ contains the family $\cS$ as the zero section. In between sits $\cX = \Tot \, \cO_{\cS}(-\cD_1)$. One component of its special fibre is what we denoted $X=\Tot \, \cO_{S}(-D_1)$ before and the other one we are going to denote $Q= \Tot \, \cO_{\cS}(-\cD_1)|_{P}$. Later the observation that the two components are glued along a surface $$D \coloneqq X \cup Q \cong \Tot \, \cO_{\bP^1}(-2)$$ that admits a holomorphic symplectic form will be crucial.

Of course, on the level of moduli spaces there is essentially no difference between the moduli space of stable maps to $\CYfi$, $\CYth$ or $S$ by the properness assumption of \Cref{conj: curious CY4 vanishing}:
\begin{equation*}
    \Mbar_{g,n}(Z,\beta) = \Mbar_{g,n}(X,\beta) \times \Aone = \Mbar_{g,n}(S,\beta)\times \Aone\,.
\end{equation*}
However, for later reference we record that virtual fundamental classes do differ by an explicit insertion:
\begin{equation}
    \label{eq: virt Z vs X}
    [\Mbar_{g,n}(Z,\beta)]^{\vir}_{\Gm} = \HodgeLambda[g]{\epsilon} \,e^{\Gm}\!(\mathbf{R}\pi_*f^*\cO_{S}(-D_2))\cap \left([\Mbar_{g,n}(X,\beta)]^{\vir} \times [\Aone]_{\Gm}\right)\,.
\end{equation}

\begin{figure}
    \centering
    \begin{tikzpicture}
        \node[anchor=east] at (-0.5,1) {$\cS$:};
        \node[anchor=east, code] at (-0.5,3.5) {$\cO_{\cS}(-\cD_1)$:};
        \node[anchor=east, purple] at (-0.5,4.3) {$\cO_{\cS}(-\cD_2)$:};
        \node[anchor=east] at (-0.5,5.1) {$\cO_{\cS}$:};
        \draw (4,2) -- (1,2) -- (0,0) -- (3,0);
        \draw[code] (3.125,0.25) .. controls (3.125,0.25) and (0.4,0.2) .. (0.8,1)  .. controls (1.2,1.8) and (3.875,1.75) .. (3.875,1.75);
        \draw[purple] (3,0) -- (4,2) node[midway, left, purple] {$D_2$};
        \node at (2,1) {$S$};
        \node[code] at (0.8,0.3) {$D_1$};
        \node[code] at (2,3.5) {$\cO_S(-D_1)$};
        \node[purple] at (2,4.3) {$\cO_S(-D_2)$};
        \node at (2,5.1) {$\cO_S$};
        \draw[-stealth] (2,3) -- (2,2.3);
        \node at (4.2,0.8) {\LARGE$\rightsquigarrow$};
        \draw (5,1) -- (8,-1) -- (9,1) -- (6,3) -- (5,1);
        \draw (8,-1) -- (9,1);
        \draw (9,1) -- (11,3);
        \draw (10,1) -- (8,-1);
        \draw[purple] (11,3) -- (10,1);
        \draw[code] (8.125,-0.75) .. controls (8.125,-0.75) and ($(5.1,1.2) + (0.21,-0.14)$) .. ($(5.35,1.9) + (0.21,-0.14)$)  .. controls ($(5.9,2.8) + (0.21,-0.14)$) and (8.875,0.75) .. (8.875,0.75);
        \draw[code] (8.125,-0.75) -- (10.125,1.25);
        \draw[code] (8.875,0.75) -- (10.875,2.75);
        \node at (7.3,0.8) {$S$};
        \node[code] at (7.3,3.5) {$\cO_S(-D_1)$};
        \node[purple] at (7.3,4.3) {$\cO_S$};
        \node at (7.3,5.1) {$\cO_S$};
        \draw[-stealth] (7.3,3) -- (7.3,2.3);
        \node[code] at (5.5,1) {$D_1$};
        \node[code] at (9.5,0) {$F_1$};
        \node[code] at (10.1,2.6) {$F_2$};
        \node at (9.5,1) {$P$};
        \node[code] at (9.5,3.5) {$\cO_P(-F_1-F_2)$};
        \node[purple] at (9.5,4.3) {$\cO_P(-D_{\infty})$};
        \node at (9.5,5.1) {$\cO_P$};
        \draw[-stealth] (9.5,3) -- (9.5,2.3);
        \node at (8.3,0.25) {$D_0$};
        \node[purple] at (11,1.9) {$D_\infty$};
    \end{tikzpicture}
    \caption{The degeneration $\cZ \to \Aone$.}
    \label{fig: degeneration}
\end{figure}

\subsubsection{Degeneration formula}
Using the degeneration formula for stable log maps \cite[Thm.~1.5]{KLR23:DegFormulaLogMaps} (or equivalently for relative stable maps \cite{Li02:DegenFormula}) we may decompose the virtual fundamental class of \smash{$\Mbar_{g,n}(\CYth,\beta)$} in terms of stable log maps to the irreducible components of $\cX_0$. This decomposition is labelled by bi-partite graphs $\Gamma$ recording the combinatorial data of such stable log maps. The vertices of $\Gamma$ are separated into $\CYth$ and $Q$-vertices and are decorated with a genus label, a curve class and a subset of markings $\{1,\ldots,n\}$. The edges $e$ of $\Gamma$ are decorated with integers $d_e >0$ indicating a tangency order of a marking mapping to $D = X \cap Q$. All decorations must be compatible with the data $(g,n,\beta)$ in some natural way.

Now for an $\CYth$-vertex $v$ we denote by $\Mbar_v$ the moduli space of stable log maps to $(X\mid D)$ with genus, curve class, interior and contact markings as prescribed by the star of $\Gamma$ at $v$ with a similar notation for $Q$-vertices. Each moduli space \smash{$\Mbar_v$} comes with evaluation maps to $D$ that are associated with half-edges. We glue the moduli spaces along these evaluation maps. So denote by $\Delta:D^{E(\Gamma)}\rightarrow D^{E(\Gamma)} \times D^{E(\Gamma)}$ the inclusion of the diagonal. The degeneration formula then yields the following decomposition (cf. \cite[Eq.~(4.5)]{BS24:refGW}):
\begin{equation}
    \label{eq: deg formula}
    [\Mbar_{g,n}(Z,\beta)]^{\vir}_{\Gm} = \sum_{\Gamma} (-\epsilon^2)^{|E(\Gamma)|} \frac{\prod_e d_e}{|E(\Gamma)|!} \phi'_* \Delta^!\left({\textstyle \prod_v} C_v \cap [\Mbar_v]^{\vir}\right)\times [\Aone]_{\Gm}\,.
\end{equation}
Here, $\phi'$ is a morphism that composes a glued stable log map to $\cS_0$ with the projection $\cS_0\rightarrow S$, stabilises the domain and forgets the log structure. Moreover, we write $C_v$ for the cohomology classes
\begin{equation*}
    C_v = \begin{cases}
        (-\epsilon^2)^{g-1} & v\text{ is an } X\text{-vertex}\\
        \HodgeLambda[g]{\epsilon} \,e^{\Gm}\!(\mathbf{R}\pi_*f^*\cO_{S}(-2F)) & v\text{ is a } Q\text{-vertex}
    \end{cases}
\end{equation*}
These classes are the result of distributing the insertion on the right-hand side of \eqref{eq: virt Z vs X} over $\CYth$ and $Q$-vertices.

\subsubsection{Vanishing}
The key observation to prove \Cref{prop: curious CY4 vanishing virt classes} is that most terms in the degeneration formula \eqref{eq: deg formula} vanish.

\begin{prop}
    \label{prop: deg formula vanishing}
    We have $\phi'_* \Delta^!\left({\textstyle \prod_v} C_v \cap [\Mbar_v]^{\vir}\right) = 0$ unless the following conditions hold for all $Q$-vertices $v$:
    \begin{enumerate}
        \item\label{item: betav} the curve class $\beta_v$ decorating $v$ is the multiple of a fibre class of $P\rightarrow D_0$;
        
        \item\label{item: gv} $g_v=0$;

        \item\label{item: confined markings} $v$ carries a single contact marking (an edge of $\Gamma$) and no interior markings.
    \end{enumerate}
\end{prop}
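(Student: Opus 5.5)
Following the proof of \cite[Thm.~4.1]{BS24:refGW}, which treats the case $D_2=0$, I would show that every term violating one of the conditions carries an equivariant class whose non-equivariant part vanishes for dimension or holomorphic-symplectic reasons. The input is the geometry of $Q=\Tot\,\cO_{\cS}(-\cD_1)|_P$. Since $D_1\cap D_2$ is zero-dimensional, $\cD_1|_P$ is a union of fibres $F_1,F_2,\ldots$ of $P\to D_0\cong D_2$. The relevant line bundle on $P$ is $\cO_P(-D_\infty)$, with the $\Gm$-weight sitting on $\cO_S$.

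\textbf{Step 1: class of a $Q$-vertex.} For a $Q$-vertex $v$, write $\beta_v = k\,[D_\infty] + \ell\,[F]$, where $F$ is the fibre class. Matching intersection numbers in the degeneration, the contact orders at $v$ must sum to $D_0\cdot\beta_v$, and $D_\infty\cdot\beta_v$ is bounded by $D_2\cdot\beta$. The key observation is that $Q$ relative to $D$ is log Calabi--Yau up to the factor $\cO_P(-D_\infty)$. Moreover, the divisor $D\cong\Tot\,\cO_{\bP^1}(-2)$ (more precisely, the restriction over a fibre) carries a holomorphic symplectic form. This forces a cosection, so the virtual class of relative maps to $(Q\mid D)$ vanishes unless the curve is confined in a way that kills the cosection. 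I expect this to show $k=0$, i.e.\ $\beta_v$ is fibre-type, giving \ref{item: betav}. I would first attempt this by computing the virtual dimension of $\Mbar_v$ together with the degree of $C_v$, and comparing with the dimension constraint imposed by $\Delta^!$. Pushing forward along $\phi'$ to $\Mbar_{g,n}(S,\beta)$ loses the $P$-directions, so a positive excess dimension gives vanishing.

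\textbf{Step 2: genus and markings.} Once $\beta_v=\ell[F]$, the maps factor through a single fibre $\bP^1$ of $P$, thickened by $\cO(-2F)$, the fibre of $\cD_1$, and the line bundle directions. The relevant integrals are then local-curve integrals with the insertion $\HodgeLambda[g_v]{\epsilon}\,e^{\Gm}(\mathbf{R}\pi_*f^*\cO_S(-2F))$. I would run the dimension count for the relative local $\bP^1$ with $\lambda$-class insertions. The equivariant parameter $\epsilon$ only appears through the classes $C_v$, while $\phi'_*$ maps to a space where the $Q$-part is contracted to points of $D_2$. A $\lambda_{g_v}$-type factor combined with the trivial weight on the other directions produces a $\lambda_{g_v}^2=0$ vanishing (Mumford) or a positive-dimensional fibre of $\phi'$ when $g_v>0$. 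The same mechanism applies with extra contact markings or interior markings: each adds a dimension along the fibre of $\phi'$ (points moving on $\bP^1$ or in $D$) not compensated by the integrand. That fibre is positive-dimensional, so the pushforward vanishes and we obtain \ref{item: gv} and \ref{item: confined markings}.

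\textbf{Main obstacle.} I expect Step 1 to be the hardest part: excluding $Q$-vertices with a $D_\infty$-component, where the dimension count alone may not suffice. There I would use the holomorphic symplectic form on $D$ to build a nowhere-vanishing cosection on the obstruction theory of the glued moduli space, or equivalently the trivial $\cO$ factor in the obstruction bundle of the diagonal pullback, as in \cite[Sec.~4]{BS24:refGW}. The remaining steps are verbatim adaptations of loc.~cit.
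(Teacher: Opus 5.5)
Your Steps 2 and 3 follow the paper's outline in spirit. Step 1, however, has a genuine gap. You never give a mechanism that turns the holomorphic symplectic form on $D$ into vanishing of a $Q$-vertex with $\pi_*\beta_v\neq 0$, and neither route you propose works.

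\begin{itemize}
\item \emph{Dimension count.} If $\pi_*\beta_v\neq 0$, then $\phi'$ sends the $Q$-component onto $D_2\subset S$. So nothing is contracted, and the excess-dimension argument you sketch has nothing to act on.
\item \emph{Cosection on the glued space.} A cosection on the obstruction theory of the glued moduli space, or a ``trivial summand'' in the obstruction theory of $\Delta^!$, is the wrong object. These are not equivalent to each other, and you give no reason why such a cosection would be surjective. The symplectic form of $D$ only induces a cosection on moduli of maps that factor through $D$.
\end{itemize}

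The missing observation is structural. Because $D_1\cap D_2$ is finite, $\cO_{\cS}(-\cD_1)|_P=\cO_P(-F_1-F_2)$ is pulled back from $D_2$. Hence $Q=P\times_{D_2}D$ is a $\bP^1$-bundle over the whole gluing surface $D=\Tot\,\cO_S(-D_1)|_{D_2}\cong\Tot_{\bP^1}\cO(-2)$, with $D\subset Q$ as a section. It is not a ``restriction over a fibre''. Composing with $Q\to D$ gives a map $\Mbar_v\to\Mbar(D,\pi_*\beta_v)$. The class $[\Mbar_v]^{\vir}$ is pulled back from $[\Mbar(D,\pi_*\beta_v)]^{\vir}$, and the latter vanishes for $\pi_*\beta_v\neq 0$ by Kiem--Li cosection localisation. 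So the vanishing already happens for the single vertex class $[\Mbar_v]^{\vir}$, and the product and $\Delta^!$ then vanish trivially. This is exactly the paper's argument for (i).

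In Step 2 you should likewise pin down where the Hodge classes come from. Your alternatives are ``$\Lambda_{g_v}(\epsilon)$-type factor'' or ``positive-dimensional fibre of $\phi'$''; neither is the operative mechanism. For $\beta_v$ a fibre class, $f^*T_D$ is pulled back from the image point in $D$, so $[\Mbar_v]^{\vir}=e(\bE^\vee\boxtimes T_D)\cap[\Mbar_v]^{\vir_\pi}$. It is this factor, not anything in $C_v$, that kills $g_v>0$. Since $T_D|_{\bP^1}\cong\cO(2)\oplus\cO(-2)$ is not trivial, you need the full Mumford relation $c(\bE)c(\bE^\vee)=1$. It gives $e(\bE^\vee\boxtimes T_D)=(-4\,\mathrm{pt}^2)^{g_v}$, which vanishes on $\bP^1$. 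Your $\lambda_{g_v}^2=0$ is what this reduces to, but only once the factor has been identified. For (iii), your dimension heuristic is consistent with the paper, which simply defers to the corresponding argument of Brini--Schuler.
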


\begin{proof}[Sketch of proof]
    The \namecref{prop: deg formula vanishing} follows by a slight modification of the arguments used to prove \cite[Prop.~4.3]{BS24:refGW}:
    \begin{enumerate}
        \item Suppose $v$ is a $Q$-vertex whose curve class $\beta_v$ has a non-trivial push-forward along $\pi:P\rightarrow D_0$, that is, it is not a fibre class. Then \smash{$[\Mbar_v]^{\vir}$} vanishes since this class can be pulled back from the virtual fundamental class of the moduli space of stable maps to $D$ in curve class $\pi_* \beta_v \neq 0$. The latter virtual fundamental class vanishes by cosection localisation and the fact that $D \cong \Tot{}_{\bP^1} \, \cO(-2)$ admits a holomorphic symplectic form (cf. \cite[Sec.~4.4.1]{BS24:refGW}).

        \item Again, let $v$ be a $Q$-vertex. Since now we may assume that $\beta_v$ is the multiple of a fibre, the virtual fundamental class $[\Mbar_v]^{\vir}$ may be written as an intersection $e(\bE^\vee \boxtimes T_{D}) \cap [\Mbar_v]^{\vir_\pi}$ for some modified class $[\Mbar_v]^{\vir_\pi}$ where $\bE$ is the Hodge bundle (cf. \cite[Eq.~(3)]{MP06:topViewGW}). Then writing $\mr{pt}$ for the class of a point in $\bP^1\subset D$ by Mumford's relation for the Chern classes of the Hodge bundle \cite[Sec. 5]{Mu83} we find that
        \begin{equation*}
            e(\bE^\vee \boxtimes T_{D}) = \HodgeLambda[g]{2\,\mr{pt}} \, \HodgeLambda[g]{-2\,\mr{pt}} = (-4\,\mr{pt}^2)^{g_v}
        \end{equation*}
        which is zero if $g_v>0$.

        \item The final confinement of markings can be argued verbatim to \cite[Sec.~4.4.2]{BS24:refGW}. \qedhere
    \end{enumerate}
\end{proof}

\subsubsection{Evaluating the remaining contributions}
For a graph $\Gamma$ to contribute non-trivially to the degeneration formula, all of its $Q$-vertices must be adjacent to precisely one edge by \Cref{prop: deg formula vanishing} \labelcref{item: confined markings}. This means $\Gamma$ must be of star shape with a unique $\CYth$-vertex $\tilde{v}$ connected to $Q$-vertices $v_1,\ldots,v_m$ by $m$ edges. By \Cref{prop: deg formula vanishing} \labelcref{item: betav}--\labelcref{item: confined markings}, $\tilde{v}$ must carry all $n$ interior markings and be decorated with $g_{\tilde{v}}=g$ and $\beta_v = \beta$. The remaining $Q$-vertices are of genus $g_{v_i}=0$ and their curve class $\beta_{v_i}$ must be some $d_{i}$th multiple of a fibre-class for some $d_i\in\bZ_{>0}$. Now note that the last requirement on the curve classes implies that under $\phi'$ each domain of a stable log map in $\Mbar_{v_i}$ gets contracted as $\phi'$ composes with the projection $\cS_0\rightarrow S$ and stabilises the domain. This means we can first push forward all cycles of $Q$-vertices along the evaluation map $\mr{ev}_i:\Mbar_{v_i} \rightarrow \bP^1 \subset D$ when gluing along the diagonal $\Delta$ in the degeneration formula \eqref{eq: deg formula}. Regarding the last push-forward, a dimension count reveals that we must have (cf.~\cite[Sec.~4.5]{BS24:refGW})
\begin{equation*}
    \mr{ev}_i{}_* \big(C_{v_i} \cap [\Mbar_{v_i}]^{\vir}\big) = \epsilon^{-1} a_{i} [\bP^1]
\end{equation*}
for some $a_i\in\bQ$. The constant $a_i$ can then be determined exactly as in \cite[Sec.~4.5]{BS24:refGW} by confining the image of a stable log map to a fibre over a point in $\bP^1\subset D$. The result of this calculation is $a_i = (-1)^{d_i-1}/d_i^2$. Plugging this into the degeneration formula \eqref{eq: deg formula} then yields
\begin{equation*}
    [\Mbar_{g,n}(Z,\beta)]^{\vir}_{\Gm} = \sum_{m>0}  \epsilon^{2g-2+m} \sum_{\substack{d_1,\ldots,d_m >0 \\ \sum_i d_i = D_2\cdot \beta}} \frac{(-1)^{D_2\cdot \beta+g-1}}{m! \prod_i d_i} \big( \phi_* [\Mbar_{\tilde{v}}]^{\vir} \big)\times [\Aone]_{\Gm}
\end{equation*}
and thus \Cref{prop: curious CY4 vanishing virt classes} follows from identifying $\Mbar_{\tilde{v}}=\Mbar_{g,n,\boldsymbol{d}}(X \mid D, \beta)$.

\begingroup
\setlength{\emergencystretch}{.5em}
\renewcommand*{\bibfont}{\footnotesize}
\printbibliography\medskip
\endgroup

\end{document}